\documentclass[10pt]{amsart}
\usepackage{amssymb,amsmath}
\usepackage{amsthm,mathabx}
\usepackage{qsymbols}
\usepackage{graphicx}
\usepackage[utf8]{inputenc}
\usepackage{appendix}

\newtheorem{thmintro}{Theorem}
\newtheorem{thm}{Theorem}[section]
\newtheorem{lem}[thm]{Lemma}
\newtheorem{prop}[thm]{Proposition}

\newtheorem{cor}[thm]{Corollary}
\newtheorem{defi}[thm]{Definition}
\newtheorem{rem}[thm]{Remark}

\newtheorem{remark}[thm]{Remark}

\numberwithin{equation}{section}

\newcommand{\R}{\mathbb{R}}

\newcommand{\E}{\mathbb{E}}

\newcommand{\ep}{\varepsilon}
\newcommand{\ka}{\kappa}

\newcommand{\lc}{\leq_{\mathrm{cvx}}}

\newcommand{\bmu}{\bar{\mu}}

\renewcommand{\ep}{\varepsilon}
\newcommand{\CAT}{\mathrm{CAT}}

\DeclareMathOperator*{\argmin}{arg\,min}

\usepackage{color}
\definecolor{revcolor}{rgb}{0.75,0,0.15}

\makeatletter
\newcommand{\flushleftheadings}{%
  \def\@secnumfont{\bfseries}%
  \def\section{\@startsection{section}{1}%
    \z@{1.1\linespacing\@plus.6\linespacing}{.55\linespacing}%
    {\normalfont\large\bfseries\raggedright}}%
  \def\subsection{\@startsection{subsection}{2}%
    \z@{.8\linespacing\@plus.5\linespacing}{.35\linespacing}%
    {\normalfont\bfseries\raggedright}}%
  \def\subsubsection{\@startsection{subsubsection}{3}%
    \z@{.5\linespacing\@plus.7\linespacing}{-.5em}%
    {\normalfont\bfseries}}%
}
\makeatother

\flushleftheadings

\begin{document}

\title{A Brenier--Strassen theorem on $\CAT(\kappa)$ spaces}

 \author{Nathael Gozlan}
 \address{Université Paris Cité, CNRS, MAP5, F-75006 Paris, France}
 \email{nathael.gozlan@u-pariscite.fr}

 \author{Hugo Malamut}
 \address{Université Paris Cité, CNRS, MAP5, F-75006 Paris, France}
 \email{hugo.malamut@u-pariscite.fr}

 \author{Shin-Ichi Ohta}
 \address{University of Osaka, Osaka 560-0043, Japan \& RIKEN Center for Advanced Intelligence Project (AIP), 1-4-1 Nihonbashi, Tokyo 103-0027, Japan}
\email{s.ohta@math.sci.osaka-u.ac.jp}

\keywords{$\CAT(\kappa)$ spaces; optimal transport; convex order; barycenter; Jensen inequality}

\subjclass[2020]{Primary 49Q22; Secondary 60E15, 53C23, 60G48}

\date{\today}

\thanks{The first and second named author are supported by a grant of the Agence nationale de la recherche (ANR), Grant ANR-23-CE40-0017 (Project SOCOT).
The third named author is supported by the JSPS Grant-in-Aid for Scientific Research (KAKENHI) 22H04942, 24K00523, 24K21511, 26H01996, and by the JST CREST JPMJCR25Q2. He is also grateful to Universit\"at Bonn for its hospitality during his visit in Summer 2026, a part of this work was carried out there.
This research has been conducted within the FP2M federation (CNRS FR 2036).}

\begin{abstract}
    We extend the Brenier--Strassen theorem about projections in convex order to non-flat spaces with curvature bounded from above.
    Precisely, for probability measures $\mu,\nu$ of finite second moment on a complete separable $\CAT(0)$ space, we prove that $\mu$ admits a unique $W_2$-projection $\bar\mu$ to the set of probability measures dominated by $\nu$ in convex order.
    Moreover, the unique optimal coupling from $\mu$ to $\bar\mu$ is induced by a $1$-Lipschitz map, without any absolute-continuity assumption on $\mu$.
    Our proof identifies the projection problem with a weak optimal transport problem whose cost is the squared distance to the set of convex means.
    We also establish a localized version on $\CAT(\kappa)$ spaces with $\kappa>0$, where the optimal map is $1/2$-H\"older continuous.
    Finally, we give a Strassen-type characterization of one-step barycentric martingales on proper $\CAT(0)$ spaces.
\end{abstract}

\maketitle

\section*{Introduction}

The aim of this article is to generalize a result about convex order projections in the Wasserstein space obtained in \cite{GJ20} to a non-Euclidean framework.

For a complete separable metric space $(E,d)$, we will denote by $\mathcal{P}_p(E)$, $p \geq 1$, the set of Borel probability measures $\mu$ such that $\int_E d^p(x_0,x) \,d\mu(x)<+\infty$ for some (and thus all) $x_0\in E$.
For $\mu,\nu \in \mathcal{P}_2(E)$, the quadratic Monge--Kantorovich distance, also known as Wasserstein distance, denoted by $W_2$, is defined as
\begin{equation}\label{eq:W2-intro}
  W_2^2(\mu,\nu) := \inf_{\pi \in \Pi(\mu,\nu)} \int_{E \times E} d^2(x,y)\,\pi(dxdy), \qquad \forall \mu,\nu \in \mathcal{P}_2(E),
\end{equation}
where $\Pi(\mu,\nu)$ is the set of all transport plans from $\mu$ to $\nu$, that is, the set of all probability measures $\pi$ on $E\times E$ such that $\pi(A\times E)=\mu(A)$ and $\pi(E\times A)=\nu(A)$ for all Borel sets $A \subset E$.
As is well known, there always exist transport plans realizing the infimum in \eqref{eq:W2-intro}; such plans are called optimal transport plans. The distance $W_1$ is defined analogously on $\mathcal{P}_1(E)$, with $d$ in place of $d^2$.

When $E=\R^d$ is equipped with the standard Euclidean distance $d$, the transport problem above is very well understood.
According to a classical result of Brenier \cite{Brenier1,Brenier2}, whenever $\mu$ is absolutely continuous, there is a unique optimal transport plan $\pi^*$ for the problem \eqref{eq:W2-intro} which is of the form $\pi^* = \mathrm{Law}(X,\nabla\phi(X))$ with $X$ a random variable of law $\mu$ and $\phi:\R^d \to \R \cup \{+\infty\}$ a convex function.
The map $T = \nabla \phi$ is well defined $\mu$-almost everywhere and is called the Brenier transport map from $\mu$ to $\nu$.
Regularity properties of the Brenier map have attracted a lot of attention; see in particular \cite{Caf92,Caffarelli2000}.
As concerns existence of the Brenier map, the absolute continuity assumption on $\mu$ can be relaxed a bit; see \cite{McCann95,GMC96,Gigli11}.
It is however easy to construct examples where there is no transport map, or examples where there are transport maps, but no optimal ones.
There are also well known examples where the Brenier map exists but is not continuous.

In \cite{GJ20}, a simple geometric condition has been identified for the existence and the Lipschitz continuity of an optimal transport map.
To recall this result, let us introduce some additional notation.
Two probability measures $\mu,\nu \in \mathcal{P}_1(\R^d)$ are said to be in convex order, denoted by $\mu \lc \nu$, if $\int_{\R^d} f\,d\mu \leq \int_{\R^d} f\,d\nu$ for all convex functions $f:\R^d \to \R$.
If $\mu,\nu \in \mathcal{P}_2(\R^d)$, there exists a unique probability measure $\bar{\mu} \lc \nu$ such that
\begin{equation}\label{eq:barmu-intro}
 W_2^2(\mu,\bar{\mu}) = \inf_{\eta \lc \nu} W_2^2(\mu,\eta).
\end{equation}
In other words, $\bar{\mu}$ is the unique metric projection (also called the nearest point projection) of $\mu$ to the convex set $\{\eta \in \mathcal{P}_2(\R^d) : \eta \lc \nu\}$ of all probability measures dominated by $\nu$ in convex order.
Existence and uniqueness of $\bar{\mu}$ were obtained in \cite{GJ20} or \cite{ACJ20}.
This convex order projection has been further studied by several authors \cite{Kim-Ruan,backhoff2020weak,Alfonsi-Jourdain-preprint}.
It is naturally related to the weak optimal transport problem with a quadratic barycentric cost \cite{GRST17,ABC19}, and admits interesting applications in terms of concentration of measure \cite{GRST18}, sampling \cite{ACJ20}, and extrapolation of geodesics \cite{GNT25}.
More importantly for the purpose of this article, it was shown in \cite{GJ20} that the optimal transport between $\mu$ and its projection $\bar{\mu}$ is always deterministic (that is, given by a transport map) and regular.
Precisely, there exists a continuously differentiable convex function $\phi : \mathbb{R}^d \to \mathbb{R}$ such that the map $T = \nabla \phi$ pushes $\mu$ forward to $\bar{\mu}$ and is $1$-Lipschitz.
The converse is also true: if $\phi$ is a differentiable convex function transporting a probability measure $\mu$ to another probability measure $\nu$, and if $\nabla \phi$ is $1$-Lipschitz, then $\bar{\mu} = \nu$.
See \cite{FGP20} for a proof of Caffarelli's contraction theorem based on this idea.

We shall generalize this construction to $\CAT(\kappa)$ spaces with $\kappa\geq 0$.
Precise definitions will be recalled in Section \ref{sec:cat}.
The class of $\CAT(0)$ spaces contains Hilbert spaces and their closed convex subsets, metric trees, Euclidean buildings, and complete simply-connected Riemannian manifolds with nonpositive sectional curvature (in particular, hyperbolic spaces).
We refer to \cite{SturmNPC,BH99,Bacak} for an overview of the subject.

The case $\kappa>0$ is treated in Theorems~\ref{thm:main-WOT}, \ref{thm:BS-CATkappa} below.
In the case $\kappa =0$, we obtain the following result.

\begin{thmintro}\label{thm:BS-NPC}
Suppose that $(E,d)$ is a complete separable $\CAT(0)$ space.
For any $\mu,\nu \in \mathcal{P}_2(E)$, the following hold.
\begin{enumerate}
\item There exists a unique $\bar{\mu} \lc \nu$ such that $\inf_{\eta \lc \nu} W_2^2(\mu,\eta) = W_2^2(\mu,\bar{\mu})$.
\item There exists a unique $W_2$-optimal transport plan from $\mu$ to $\bar{\mu}$ given by a $1$-Lipschitz transport map $T$ defined on the support of $\mu$.
\end{enumerate}
\end{thmintro}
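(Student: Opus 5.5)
We follow the route indicated in the abstract: identify the projection problem with a weak optimal transport problem. Convex order on a $\CAT(0)$ space is defined through convex (lower semicontinuous, geodesically convex) test functions. The key fact to use is the metric analogue of Strassen's theorem, which is available at least in one direction: if $\eta=\mathrm{Law}(Y)$ and $\nu=\mathrm{Law}(Z)$ with $Y\in\mathcal{B}(\mathrm{Law}(Z\mid Y))$, then $\eta\lc\nu$. Here $\mathcal{B}(p)$ denotes the closed convex set of ``convex means'' of $p\in\mathcal{P}_2(E)$, i.e.\ the points $y$ with $f(y)\le\int f\,dp$ for all convex $f$; it contains the barycenter. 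For any $\eta\lc\nu$ we would show
\[
W_2^2(\mu,\eta)\ \ge\ V(\mu,\nu):=\inf_{\pi\in\Pi(\mu,\nu)}\int d^2\bigl(x,\mathcal{B}(\pi_x)\bigr)\,\mu(dx).
\]
We would also show that any minimizer $\pi^*$ of the weak problem $V$ produces an admissible $\eta=\mathrm{Law}(P_{\mathcal{B}(\pi^*_X)}(X))$ attaining it. Here $P_C$ is the nearest-point projection onto a closed convex set, which is well defined in a complete $\CAT(0)$ space.

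\textbf{Step 1 (lower bound).} Given $\eta\lc\nu$ and an optimal plan $\gamma\in\Pi(\mu,\eta)$, the difficulty is to produce a martingale-type coupling of $\eta$ and $\nu$ without a full Strassen theorem. We plan to avoid this by working with the dual side. Since $x\mapsto d^2(x,C)$ behaves well, we would use a Kantorovich-type duality for $V$, in the spirit of Gozlan--Roberto--Samson--Tetali, with the cost $c(x,p)=d^2(x,\mathcal{B}(p))$, which is convex in $p$ along mixtures. This gives $V=\sup_\varphi\{\int Q\varphi\,d\mu-\int\varphi\,d\nu\}$, where $Q\varphi(x)=\inf_p\{d^2(x,\mathcal{B}(p))+\int\varphi\,dp\}$. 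One checks that it suffices to take $\varphi$ convex. Indeed, replacing $\varphi$ by its convex envelope makes $\int\varphi\,dp\ge\varphi(y)$ for $y\in\mathcal{B}(p)$, so $Q\varphi(x)=\inf_y\{d^2(x,y)+\varphi(y)\}$. Then for $\eta\lc\nu$ we get $\int Q\varphi\,d\mu-\int\varphi\,d\nu\le\int Q\varphi\,d\mu-\int\varphi\,d\eta\le W_2^2(\mu,\eta)$, which is the inequality $V\le W_2^2(\mu,\eta)$.

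\textbf{Step 2 (existence and attainment).} Existence of a minimizer $\pi^*$ follows from lower semicontinuity and convexity of $p\mapsto d^2(x,\mathcal{B}(p))$ under $W_2$-convergence, together with tightness of $\Pi(\mu,\nu)$. Setting $T_0(x)=P_{\mathcal{B}(\pi^*_x)}(x)$ and $\bar\mu=T_{0\#}\mu$, we have $\bar\mu\lc\nu$ by the easy Jensen direction, and $W_2^2(\mu,\bar\mu)\le V$. Hence $\bar\mu$ is a projection, and the coupling $(X,T_0(X))$ is optimal.

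\textbf{Step 3 (Lipschitz map and uniqueness).} This is the main obstacle. We would use $c$-cyclical monotonicity of the optimal weak plan: for $x,x'$ in a full-measure set, swapping kernels does not decrease the cost, so
\[
d^2(x,B_x)+d^2(x',B_{x'})\le d^2(x,B_{x'})+d^2(x',B_x),
\]
and the same holds for all mixtures of the two kernels. The projections onto convex sets should yield $d(T_0x,T_0x')\le d(x,x')$. In Hilbert space this is a quadrilateral computation. In $\CAT(0)$ we would use the quadruple comparison (Reshetnyak) inequality
\[
d^2(x,y')+d^2(x',y)\le d^2(x,y)+d^2(x',y')+2\,d(x,x')\,d(y,y'),
\]
applied with $y=T_0x$ and $y'=T_0x'$. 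To make it compatible with the swap inequality, we would use the mixture kernel $\tfrac12(\pi_x+\pi_{x'})$, whose convex-mean set contains the geodesic midpoint of $y$ and $y'$, and then apply the obtuse-angle property of projections. Combining these gives $d(y,y')^2\le d(x,x')\,d(y,y')$.

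Once this holds on a full-measure set, $T_0$ extends by continuity to $\mathrm{supp}\,\mu$. Uniqueness of $\bar\mu$ then follows from strict convexity of $\eta\mapsto W_2^2(\mu,\eta)$ along mixtures when the optimal coupling is deterministic. If $\bar\mu_0,\bar\mu_1$ are both projections, the mixture $\tfrac12(\bar\mu_0+\bar\mu_1)$ is admissible and optimal, so its optimal coupling is induced by a map. This forces $T^0=T^1$ $\mu$-a.e. The same argument gives uniqueness of the optimal plan.
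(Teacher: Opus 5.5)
Steps 1 and 2 follow the paper's route: weak transport with cost $d^2(x,C(p))$, duality restricted to convex potentials, and $T_0(x)$ the projection of $x$ onto $C(\pi^*_x)$. Step 3, however, has two genuine gaps.

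\textbf{The midpoint choice loses a factor $2$.} Assign $\frac12(\pi_x+\pi_{x'})$ to both $x$ and $x'$ and use $m=[y,y']_{1/2}$ as a convex mean. Then $c$-monotonicity and \eqref{eq:CAT0} give
\[
d^2(y,y')\le d^2(x,y')+d^2(x',y)-d^2(x,y)-d^2(x',y')\le 2\,d(x,x')\,d(y,y'),
\]
that is, only $d(y,y')\le 2\,d(x,x')$. The obtuse-angle property cannot repair this, since neither $y'$ nor $m$ need belong to $B_x$. Concretely, take $\R^n$, where $B_x=\{b(\pi_x)\}$ and so projecting carries no information. The data $y=2x$, $y'=2x'$ satisfy both your midpoint swap inequality and Reshetnyak's inequality with equality. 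So no combination of these facts can give a $1$-Lipschitz bound.

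What is needed is the first-order version of the swap. Use the kernels $(1-t)\pi_x+t\pi_{x'}$ and $(1-t)\pi_{x'}+t\pi_x$, with convex means $[y,y']_t$ and $[y',y]_t$. The same computation gives
\[
2(1-t)\,d^2(y,y')\le d^2(x,y')+d^2(x',y)-d^2(x,y)-d^2(x',y').
\]
Letting $t\to0$ before invoking Reshetnyak yields $d(y,y')\le d(x,x')$. This is exactly how the paper proceeds.

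\textbf{The uniqueness argument is circular.} From ``$\frac12(\bar\mu_0+\bar\mu_1)$ is admissible and optimal'' you infer ``its optimal coupling is induced by a map''. But Steps 1--2 only show that the particular projection $(T_0)_\#\mu$, built from a weak minimizer, admits one deterministic optimal coupling. They say nothing about an arbitrary minimizer $\eta\lc\nu$, such as this mixture, or about every optimal coupling to it. Relating such an $\eta$ back to the weak problem would need a Strassen kernel from $\eta$ to $\nu$, which you wanted to avoid. Moreover, $W_2^2(\mu,\cdot)$ is not strictly convex along mixtures in general; it is linear when $\mu$ is a Dirac mass.

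The missing idea is to interpolate along generalized geodesics with base $\mu$ rather than along mixtures. Glue optimal couplings $(Z,X_0)$ and $(Z,X_1)$ of $\mu$ with two minimizers $\eta_0,\eta_1$, and let $X_{1/2}$ be the pointwise midpoint. Convexity of test functions along geodesics gives $\mathrm{Law}(X_{1/2})\lc\nu$. On the other hand, \eqref{eq:CAT0} gives $\mathbb{E}\,d^2(Z,X_{1/2})\le V-\frac14\,\mathbb{E}\,d^2(X_0,X_1)$, with $V$ the optimal value. Hence $X_0=X_1$ almost surely. This yields uniqueness of $\bar\mu$. Taking $\eta_0=\eta_1=\bar\mu$ with two arbitrary optimal couplings then yields uniqueness of the optimal plan, which must be $(\mathrm{id},T_0)_\#\mu$.

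\textbf{A smaller point in Step 1.} Convex order only tests l.s.c.\ convex functions. So after passing to the convex envelope you also need its l.s.c.\ envelope, and you must check that it stays convex and leaves $Q$ unchanged.
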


Theorem \ref{thm:BS-NPC} is deduced from the following structure theorem, which recasts the projection problem \eqref{eq:barmu-intro} as a weak optimal transport problem and provides a full description of its solutions.
Below $C(q)$ denotes the set of all convex means of a probability measure $q$; that is, the set of points $x\in E$ such that $f(x) \leq \int_E f\,dq$ for all lower semicontinuous convex functions $f:E\to\R \cup \{+ \infty\}$ (see Section \ref{sec:convex-means}), and $\mu p = \nu$ means that a probability kernel $p=(p_x)_{x\in E}$ satisfies $\int_E p_x(dy) \,\mu(dx)=\nu(dy)$.

\begin{thmintro}\label{thm:projection_WOT}
Let $(E,d)$ be a complete separable $\CAT(0)$ space and $\mu,\nu \in \mathcal{P}_2(E)$.
Then
\[
\min_{\eta \lc \nu} W_2^2(\mu,\eta) \;=\; \min_{p:\mu p = \nu} \int_E d^2\bigl(x, C(p_x)\bigr)\,\mu(dx) \;=\; \max_{f:\text{convex}} \left\{ \int_E Qf\,d\mu - \int_E f\,d\nu \right\},
\]
where the second minimum is taken over all probability kernels $p$ such that $\mu p = \nu$, the maximum over all $\nu$-integrable, lower semicontinuous, convex functions $f:E \to \R \cup \{+\infty\}$, and $Qf(x) := \inf_{y\in E} \{f(y)+d^2(x,y)\}$.
Solutions $\bar \mu,p,f$ of these problems are related by a unique map $T$ defined $\mu$-almost everywhere as follows:
\begin{enumerate}
\item $T$ is the $W_2$-optimal transport map from $\mu$ to $\bar \mu$;
\item For $\mu$-almost all $x$, $T(x)$ is the metric projection of $x$ to $C(p_x)$;
\item $T$ is the proximal operator\footnote{Our definition of proximal operator differs from the usual one by a factor $1/2$.} associated with $f$ in the sense that, for $\mu$-almost all $x$,
\[ T(x) = \argmin_{y \in E} \Bigl\{f(y) + d^2(x,y)\Bigr\}.
\]
\end{enumerate}
\end{thmintro}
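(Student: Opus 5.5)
We establish the chain of (in)equalities in the form $\min_\eta \le \min_p$, $\min_p \le$ (a dual value), and weak duality closing the loop, then read off the structure of optimizers from the equality cases.

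\emph{Step 1 ($\inf_\eta W_2^2 \le \inf_p$).} Given a kernel $p$ with $\mu p=\nu$, let $T_p(x)$ be the metric projection of $x$ onto $C(p_x)$. This is well defined since $C(p_x)$ is a nonempty closed convex set in a $\CAT(0)$ space; nonemptiness comes from the barycenter of $p_x$, and measurability needs a selection argument. Put $\eta:=(T_p)_\#\mu$. For a lsc convex $f$ we have $f(T_p(x))\le\int f\,dp_x$, and integrating against $\mu$ gives $\eta\lc\nu$. Hence $W_2^2(\mu,\eta)\le\int d^2(x,C(p_x))\,\mu(dx)$.

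\emph{Step 2 (reverse inequality).} Given $\eta\lc\nu$ and an optimal plan $\pi\in\Pi(\mu,\eta)$, use a Strassen-type statement on $\CAT(0)$ spaces: $\eta\lc\nu$ yields a kernel $q$ with $\eta q=\nu$ and $y\in C(q_y)$. (If this is not available earlier, I would replace it by a direct duality argument; see Step 3.) Compose to obtain $p_x:=\int q_y\,\pi_x(dy)$. To conclude $d^2(x,C(p_x))\le\int d^2(x,y)\,\pi_x(dy)$, I would use convexity: the barycenter $b$ of $\pi_x$ satisfies $d^2(x,b)\le\int d^2(x,y)\,\pi_x(dy)$ (Jensen in $\CAT(0)$), and $b\in C(p_x)$ because for convex $f$, $f(b)\le\int f\,d\pi_x\le\int\!\!\int f\,dq_y\,d\pi_x$. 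Existence of minimizers in both problems follows from tightness, the lower semicontinuity of $W_2$, and the closedness of $\{\eta\lc\nu\}$ under $W_2$-convergence. Closedness holds because convex order can be tested against convex Lipschitz functions after an inf-convolution approximation.

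\emph{Step 3 (duality).} Weak duality is direct: for $f$ convex lsc and any $\eta\lc\nu$ with optimal coupling $\pi$, we have $\int Qf\,d\mu\le\int f\,d\eta+W_2^2(\mu,\eta)\le\int f\,d\nu+W_2^2$. For attainment, I would apply the weak optimal transport duality of Gozlan--Roberto--Samson--Tetali / Backhoff et al.\ to the cost $c(x,p)=d^2(x,C(p))$. This cost is convex in $p$: if $y_i\in C(p_i)$, the point of the geodesic between them at parameter $t$ lies in $C((1-t)p_0+tp_1)$, and $d^2$ is convex along geodesics in $\CAT(0)$. It is also lsc. The resulting dual potential has the form $\varphi(x)=\inf_p\{c(x,p)+\int f\,dp\}$, and I would show that the optimal dual $f$ can be replaced by its lsc convex envelope, so that $\inf_p\{\cdots\}=Qf(x)$. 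The idea is to choose $p=\delta_y$, giving $\inf_p\le Qf$; conversely, for any $p$ and $z\in C(p)$ we have $f(z)\le\int f\,dp$ once $f$ is convex. Proving that the dual optimizer exists and can be convexified is, I expect, the main obstacle. I would go through a $c$-concavity/compactness argument in $L^1(\nu)$, normalizing $f$ at a base point, or else use a Hahn--Banach argument on the convex set $\{\eta\lc\nu\}$.

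\emph{Step 4 (structure).} In the equality cases every inequality above is tight. From Step 3, $\mu$-a.e.\ $x$ is coupled by $\pi$ to minimizers $y$ of $f(y)+d^2(x,y)$. Since $f$ is convex and $d^2(x,\cdot)$ is $2$-uniformly convex in $\CAT(0)$, this minimizer is unique, so $\pi$ is induced by the proximal map $T$. This proves (1) and (3). Uniqueness of $\bar\mu$ follows because any optimal $\bar\mu$ equals $T_\#\mu$ with $T$ determined by $f$; alternatively, it follows from strict convexity of $W_2^2(\mu,\cdot)$ along the linear interpolations that are realized through such maps. Item (2) follows from tightness in Step 1: $T(x)\in C(p_x)$ and $d(x,T(x))=d(x,C(p_x))$.
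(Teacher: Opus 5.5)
\textbf{Main gap: the dual maximizer.} Your overall plan matches the paper's: the weak transport problem with cost $c(x,p)=d^2(x,C(p))$, duality, replacing dual potentials by convex ones, and reading off the structure from the equality cases. However, the step on which your Step~4 rests is never proved: the existence of a maximizing $\nu$-integrable l.s.c.\ convex $f$. You call it the main obstacle and name two possible strategies, but you carry out neither. The ``max'' in the statement and item (3) depend on it. In your route, so do item (1) and the uniqueness of $\bar\mu$, since you deduce both from the proximal map of $f$.

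The paper does not build the maximizer by hand. It invokes the fundamental theorem of weak optimal transport (Beiglb\"ock et al.), which gives an optimal kernel, strong duality and a dual maximizer. It verifies that theorem's hypotheses by proving that $c$ is \emph{continuous} on $E\times\mathcal{P}_2(E)$ and convex in $p$; you only assert that $c$ is l.s.c., with no argument. This continuity is the main new ingredient. Proposition~\ref{prop:stability} shows $d_H(C(p),C(q))\le W_1(p,q)$, deduced from the lifting identity $C(p)=\operatorname{pr}_1(C(\pi))$ for every $\pi\in\Pi(p,q)$. That identity is proved by a separation argument in $\R^{m+1}$ combined with the finite-intersection property of bounded closed convex sets. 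The paper uses the same stability for the measurability of $x\mapsto P_{C(p_x)}x$, which you leave to an unspecified selection argument.

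\textbf{Secondary issues.}

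(a) Step~2 uses a Strassen theorem for the convex order on $\CAT(0)$ spaces. In the paper this is Theorem~\ref{thm:strassen}, which is derived from the very statement you are proving, so you cannot use it here. Your fallback to Step~3 is the right move, and it makes Step~2 unnecessary.

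(b) Existence of a minimizer over $\{\eta\lc\nu\}$ ``by tightness'' is not justified. In a non-proper separable $\CAT(0)$ space, bounded second moments do not give tightness (think of $\delta_{e_n}$ in $\ell^2$), and you give no reason why $\{\eta\lc\nu\}$ should be tight. The paper avoids the issue: it obtains $\bar\mu=T_\#\mu$ from an optimal kernel, exactly as in your own Step~1.

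(c) Your convexification argument only shows $\inf_\rho\{\int f\,d\rho+c(x,\rho)\}=Qf(x)$ when $f$ is already convex and l.s.c. For a general dual potential you need three further facts:
\begin{enumerate}
\item the convex-mean envelope $\bar f(y)=\inf\{\int f\,d\rho : y\in C(\rho)\}$ is convex, by Lemma~\ref{rem:convex_mean_geodesic};
\item $\inf_\rho\{\int f\,d\rho+c(\cdot,\rho)\}=Q\bar f$, obtained by exchanging the infima over $\rho$ and over $y\in C(\rho)$;
\item the l.s.c.\ envelope of $\bar f$ is still convex and has the same $Q$-transform, which uses the continuous dependence of geodesics on their endpoints.
\end{enumerate}

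\textbf{A repair for uniqueness.} The paper proves uniqueness of $\bar\mu$ and of the optimal plan \emph{without} any dual optimizer. Given two optimal $\eta_0,\eta_1$, it takes the midpoint of the generalized geodesic between them based at $\mu$; this midpoint is still $\lc\nu$, and the $2$-convexity of $d^2(z,\cdot)$ then forces $\eta_0=\eta_1$. You could adopt this. Your alternative via the proximal map of $f$ is valid, but only once dual attainment has been established.
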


The optimal transport map $T$ from $\mu$ to $\bar\mu$ describes the first step of a coupling of $\mu$ and $\nu$ provided by an optimal weak transport plan.
As in the Euclidean setting, the second step is a coupling between $\bar\mu$ and $\nu$ which satisfies a martingale property in the sense of Émery and Mokobodzki \cite{EM91}, but in general not in the (stronger) sense of Sturm \cite{Sturm02} (see Remark \ref{rem:martingale} below for a precise statement).

The paper is organized as follows.
In Section~\ref{sec:cat}, we recall the definition of $\CAT(\kappa)$ spaces with $\kappa \geq 0$, of convex order between probability measures, and the notions of barycenter and convex mean.
In Section~\ref{sec:wot}, we study the weak optimal transport problem associated with the cost function $c(x,p) = d^2(x,C(p))$.
The first part of the section is devoted to continuity properties of this cost with respect to the product topology on $E\times\mathcal{P}_2(E)$, and the second part to the proof of a general version of Theorem~\ref{thm:projection_WOT}, valid on $\CAT(\kappa)$ spaces (Theorem~\ref{thm:main-WOT}).
In Section~\ref{sec:regularity}, we study the regularity of the optimal transport map $T$ from $\mu$ to its projection $\bar{\mu}$ and complete the proof of Theorem~\ref{thm:BS-NPC} as well as its generalization to $\CAT(\kappa)$ spaces with $\kappa>0$ (Theorem~\ref{thm:BS-CATkappa}).
Finally, in Section~\ref{sec:barycentric}, we apply similar ideas to give a Strassen type characterization for barycentric martingales in proper $\mathrm{CAT}(0)$ spaces.

\section{$\CAT(\kappa)$ spaces and convexity}\label{sec:cat}

Throughout the section, $(E,d)$ will always be a complete separable geodesic space. We recall that the product space $E\times E$ equipped with the $\ell_2$ distance
\[
d_2\bigl((a,b),(c,d)\bigr) = \sqrt{d^2(a,c)+d^2(b,d)}, \qquad (a,b), (c,d) \in E\times E,
\]
is then also a complete separable geodesic space.

\subsection{$\CAT(\kappa)$ spaces}

\subsubsection{Definition}

Let $M_0^2 := \R^2$ and, for $\kappa > 0$, let $M_\kappa^2$ denote the 2-sphere of radius $1/\sqrt{\kappa}$.
Define $d_\kappa$ as the corresponding geodesic distance on $M_\kappa^2$.
We set
\[
D_\kappa:=
\begin{cases}
+\infty, & \kappa=0,\\
\pi/\sqrt{\kappa}, & \kappa>0.
\end{cases}
\]

\begin{defi}
A geodesic metric space $(E,d)$ is called a \emph{$\CAT(\kappa)$ space} if every geodesic triangle\footnote{A geodesic triangle $\Delta(x,y,z)$ is the union $[x,y]\cup [y,z]\cup [z,x]$ of three geodesic segments; its perimeter is the sum of the lengths of the segments.} of perimeter $<2D_\kappa$ is no thicker than its comparison triangle in $M_\kappa^2$.
More precisely, if $\Delta(x,y,z)\subset E$ is a geodesic triangle with comparison triangle\footnote{A comparison triangle $\Delta(\bar x,\bar y,\bar z)\subset M_\kappa^2$ is a geodesic triangle in the model surface such that $d(x,y)=d_\kappa(\bar x,\bar y)$, $d(y,z)=d_\kappa(\bar y,\bar z)$, $d(z,x)=d_\kappa(\bar z,\bar x)$.} $\Delta(\bar x,\bar y,\bar z) \subset M_\kappa^2$, and $\bar p,\bar q$ are the comparison points\footnote{If $p\in [x,y]$, for instance, the comparison point $\bar p\in [\bar x,\bar y]$ is chosen so that $d(x,p)=d_\kappa(\bar x,\bar p)$.} of $p,q\in \Delta(x,y,z)$, then
\[
d(p,q)\le d_\kappa(\bar p,\bar q).
\]
\end{defi}

For broader background on spaces with curvature bounded above we refer to \cite{BH99}.

\subsubsection{Examples}

When $\kappa=0$, one recovers the class of nonpositively curved spaces in the sense of Alexandrov, also called global NPC spaces or Hadamard spaces (when complete).
In that case, an equivalent definition is the following property of the squared distance:
for any geodesic $(x_t)_{t\in[0,1]}$ and any point $z$, we have
\begin{equation}\label{eq:CAT0}
d^2(z,x_t) \leq (1-t)\, d^2(z,x_0)+t\, d^2(z,x_1)-t(1-t)\, d^2(x_0,x_1), \qquad \forall t\in [0,1].
\end{equation}
Typical examples are Hilbert spaces, metric trees, Euclidean buildings, Hadamard manifolds (complete simply connected Riemannian manifolds with nonpositive sectional curvature), as well as products and closed convex subsets of such spaces.
For the properties of such spaces, we refer to \cite{BH99,SturmNPC}.

For $\kappa>0$, the basic model example is the round sphere $S^n_{1/\sqrt{\kappa}}$, and more generally geodesically convex subsets of the sphere and spherical buildings provide natural examples.
We refer again to \cite{BH99} for details and examples.

\subsection{Convex functions}

We recall that a subset $D \subset E$ is said to be (\emph{geodesically}) \emph{convex} if, for all $x_0,x_1 \in D$ and every geodesic $(x_t)_{t\in [0,1]}$ joining $x_0$ to $x_1$, we have $x_t\in D$ for all $t\in [0,1]$.
A function $f:D \to \mathbb{R}\cup \{+ \infty \}$ defined on a convex subset $D$ of a geodesic metric space $(E,d)$ is \emph{convex} if for all geodesics $(x_t)_{t\in [0,1]}$ valued in $D$, it holds
\begin{equation}\label{eq:conv}
 f(x_t) \leq (1-t) f(x_0)+tf(x_1),\qquad \forall t\in (0,1).
\end{equation}
One can slightly relax these conditions by requiring merely the existence of a geodesic satisfying the above properties (sometimes called weak convexity), however, it makes no difference in what follows since we consider the case where geodesics in question are unique.

If $D \subset E$ is a closed convex set, then a convex function $f:D \to \mathbb{R}\cup \{+ \infty \}$ canonically extends to a convex function defined on $E$ by simply taking value $+ \infty$ outside of $D$; we might also call this function $f$ in the following.
The same applies to lower semicontinuous (l.s.c.\ in short) functions.

The convexity is strict if the inequality \eqref{eq:conv} is strict for $t \in (0,1)$ and $x_0 \neq x_1$.
For $k\in \mathbb{R}$, $f$ is called \emph{$k$-convex} if we have
\[
f(x_t)\le (1-t)f(x_0)+t\,f(x_1)-\frac{k}{2}\,t(1-t)\,d^2(x_0,x_1),
\qquad \forall t\in(0,1),
\]
instead of \eqref{eq:conv}.
The simplest function to exhibit $k$-convexity is the squared distance:
inequality \eqref{eq:CAT0} expresses precisely that, in $\CAT(0)$ spaces, the function $d^2(z,\,\cdot\,)$ is $2$-convex on $E$, for every $z\in E$.
When $\kappa > 0$, there is a local analog (see for example \cite[Proposition 3.1(i)]{Ohta}):

\begin{lem}\label{lem:Ohta}
Let $(E,d)$ be a $\CAT(\kappa)$ space with $\kappa>0$.
Recall $D_\kappa=\pi/\sqrt{\kappa}$.
Fix $\ep\in(0,D_\kappa/2)$ and $z\in E$.
Then the function $d^2(z,\,\cdot\,)$ is $k$-convex on the closed ball $\overline{B}(z,\ep)$, where
\[
k := \frac{2\sqrt{\kappa}\,\ep}{\tan(\sqrt{\kappa}\,\ep)}.
\]
That is to say, for every $x_0,x_1\in \overline{B}(z,\ep)$ and the geodesic $(x_t)_{t\in[0,1]}$ joining $x_0$ to $x_1$, we have
\[
d^2(z,x_t) \leq (1-t)\,d^2(z,x_0) + t\,d^2(z,x_1) - \frac{k}{2}\, t(1-t)\,d^2(x_0,x_1), \qquad \forall t\in[0,1].
\]
\end{lem}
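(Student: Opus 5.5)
The plan is to reduce the statement to a one-dimensional computation in the model sphere $M_\kappa^2$ and then transfer it to $E$ with the $\CAT(\kappa)$ comparison inequality.

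First, fix $x_0,x_1\in\overline{B}(z,\ep)$. Since $\ep<D_\kappa/2$, we have $d(x_0,x_1)\le 2\ep<D_\kappa$, so the geodesic $(x_t)$ is unique. The triangle $\Delta(z,x_0,x_1)$ has perimeter at most $4\ep<2D_\kappa$, so the comparison inequality applies. Taking the comparison point $\bar x_t\in[\bar x_0,\bar x_1]$, we get $d(z,x_t)\le d_\kappa(\bar z,\bar x_t)$. The comparison triangle is determined by its side lengths, so $d_\kappa(\bar x_0,\bar x_1)=d(x_0,x_1)$ and $d_\kappa(\bar z,\bar x_i)=d(z,x_i)$. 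It therefore suffices to prove the inequality for $h(t):=d_\kappa^2(\bar z,\gamma(t))$, where $\gamma$ is a constant-speed geodesic in $M_\kappa^2$ of length $\ell\le 2\ep$. The geodesic $\gamma$ stays in $\overline B(\bar z,\ep)$ because balls of radius $<D_\kappa/2$ are convex in the sphere.

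Next, I reduce to a second-derivative bound. For a $C^2$ function on $[0,1]$, the $k$-convexity inequality with constant $k\ell^2$ in place of $\frac{k}{2}\cdot 2\ell^2$ follows from $h''(t)\ge k\ell^2$, because $h(t)-\frac{k}{2}\ell^2t^2$ is then convex. Some care is needed where $h$ fails to be smooth, namely when $\gamma$ passes through $\bar z$. There $r^2$ is still smooth, since $r^2$ is smooth near the center of the sphere, so no problem arises.

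The main computation is the Hessian of $r^2/2$ on the sphere of curvature $\kappa$, where $r=d_\kappa(\bar z,\cdot)$. By the standard comparison and Jacobi field calculation,
\[
\nabla^2 (r^2/2) = dr\otimes dr + \sqrt{\kappa}\,r\cot(\sqrt{\kappa}\,r)\,(g-dr\otimes dr).
\]
Since $s\mapsto s\cot s$ is decreasing on $(0,\pi)$ and $\le1$, both eigenvalues are at least $\sqrt{\kappa}\,r\cot(\sqrt{\kappa}\,r)\ge\sqrt{\kappa}\,\ep\cot(\sqrt{\kappa}\,\ep)$ when $r\le\ep$. This gives $h''(t)\ge 2\sqrt{\kappa}\,\ep\cot(\sqrt{\kappa}\,\ep)\,\ell^2=k\ell^2$. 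Combined with the comparison step, this yields
\[
d^2(z,x_t)\le h(t)\le(1-t)h(0)+t\,h(1)-\tfrac{k}{2}t(1-t)\ell^2,
\]
which is the claimed inequality.

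The step that needs the most care is the Hessian bound and the monotonicity of $s\cot s$; this is where the exact constant $k$ comes from. The comparison step itself is routine. Instead of the Hessian formula, one could work explicitly with the spherical law of cosines along $\gamma$, but the Hessian formulation is cleaner.
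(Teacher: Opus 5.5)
Your proof is correct. The CAT($\kappa$) comparison (perimeter at most $4\ep<2D_\kappa$) reduces the claim to $h(t)=d_\kappa^2(\bar z,\gamma(t))$ in $M_\kappa^2$, and the bound $\nabla^2(r^2/2)\ge \sqrt{\kappa}\,\ep\cot(\sqrt{\kappa}\,\ep)\,g$ on the convex ball $\overline{B}(\bar z,\ep)$ gives $h''\ge k\ell^2$, which yields exactly the deficit $\frac{k}{2}t(1-t)\ell^2$. The paper does not prove this lemma itself but quotes Ohta's Proposition 3.1(i); your argument, a comparison triangle followed by a second-order computation in the model sphere, is the standard route to that result and produces the same constant.
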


\subsection{Convex order}

\subsubsection{Definition.}
For $\eta,\nu\in \mathcal{P}_1(E)$, we will say that $\eta$ is dominated by $\nu$ in \emph{convex order}, denoted by $\eta\lc \nu$, if
\begin{equation}\label{eq:OC}
\int_E f\,d\eta \leq \int_E f\,d\nu
\end{equation}
for all l.s.c\ convex functions $f:E \to \mathbb{R}\cup \{+\infty\}$ such that $\int_E [f]_-\,d\eta<+\infty$ and $\int_E [f]_-\,d\nu<+\infty$, where $[f]_-:=\max(-f,0)$.
If $\eta \lc \nu$, then a direct consequence of the definition of convex order is that for any closed convex set $D$, $\nu(D)=1$ implies $\eta(D)=1$.

\subsubsection{Test functions.}
The following lemma shows that, under reasonable assumptions, the functions used to check the convex order according to \eqref{eq:OC} can be assumed to be continuous.

\begin{lem}\label{lem:tech}
Let $(E,d)$ be a complete separable $\CAT(\kappa)$ space with $\kappa \geq 0$ and let $\eta,\nu \in \mathcal{P}_1(E)$.
\begin{enumerate}
\item If $\kappa=0$, then $\eta \lc \nu$ if and only if \eqref{eq:OC} holds for all Lipschitz convex functions $f$ bounded from below.
\item If $\kappa>0$ and if $\eta$ and $\nu$ are concentrated in a closed convex subset $D\subset E$ with $D \subset \overline{B}(x_0,\ep)$ for some $x_0 \in E$ and $\ep <D_\kappa/2$, then $\eta \lc \nu$ if and only if \eqref{eq:OC} holds for all convex, bounded, uniformly continuous functions $f:\overline{B}(x_0,\ep) \to \R$.
The same conclusion holds if $\eta,\nu \in \mathcal{P}_1(E\times E)$ are concentrated in $D\times D$.
\end{enumerate}
\end{lem}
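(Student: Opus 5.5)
The "only if" direction is immediate in both cases: a Lipschitz convex function bounded below (case 1), or a bounded uniformly continuous convex function on $\overline{B}(x_0,\ep)$ extended by $+\infty$ outside the closed convex ball (case 2), is l.s.c. and convex with integrable negative part. The content is the converse. Given an arbitrary l.s.c. convex $f:E\to\R\cup\{+\infty\}$ with $[f]_-$ integrable against $\eta$ and $\nu$, I will approximate $f$ from below by an increasing sequence of admissible convex test functions converging pointwise to $f$. Then I pass to the limit in $\int f_n\,d\eta\le\int f_n\,d\nu$ by monotone convergence, after a preliminary reduction to $f$ bounded below.

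\emph{Reduction to bounded below.} Set $f_m:=\max(f,-m)$. This is convex and l.s.c., since the maximum of two convex functions is convex. Moreover $f_m\downarrow f$ and $|f_m|\le |f|$ on the negative side, so once $\int f_m\,d\eta\le\int f_m\,d\nu$ is known for every $m$, dominated/monotone convergence gives \eqref{eq:OC} for $f$. Here the convergence is monotone decreasing with $f_m\ge f$, and $[f]_-$ is integrable. So I may assume $f\ge -m$.

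\emph{Case $\kappa=0$: Moreau--Yosida envelopes.} For $f$ convex, l.s.c., bounded below, I consider the inf-convolution $f_\lambda(x):=\inf_y\{f(y)+\lambda d(x,y)\}$ (Pasch--Hausdorff). It is $\lambda$-Lipschitz (if finite somewhere), bounded below by $\inf f$, satisfies $f_\lambda\le f$, and increases to $f$ as $\lambda\to\infty$ by lower semicontinuity. This is the standard argument: a minimizing $y_\lambda$ must satisfy $d(x,y_\lambda)\to0$ when $f(x)<\infty$ or when the limit stays bounded. The key point is convexity of $f_\lambda$. It follows from the joint convexity of $(x,y)\mapsto d(x,y)$ on a $\CAT(0)$ space, which holds by Busemann convexity, itself a consequence of \eqref{eq:CAT0}. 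Indeed, for two nearly optimal $y_0,y_1$ for $x_0,x_1$, taking the point $y_t$ on the geodesic from $y_0$ to $y_1$ gives $f(y_t)+\lambda d(x_t,y_t)\le (1-t)[\cdots]_0+t[\cdots]_1$. Monotone convergence then concludes case 1.

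\emph{Case $\kappa>0$.} Here the joint convexity of $d$ only holds locally, which is why everything is restricted to $D\subset\overline{B}(x_0,\ep)$ with $\ep<D_\kappa/2$. On this ball geodesics are unique, and the ball is convex. I restrict $f$ to $B:=\overline{B}(x_0,\ep)$ and use the same inf-convolution taken over $y\in B$. For $x,y$ in $B$ the distance $d$ is jointly convex; this follows from the $\CAT(\kappa)$ comparison for quadrilaterals of small perimeter, as in the Busemann convexity on balls of radius $<D_\kappa/2$. Hence $f_\lambda$ is convex and $\lambda$-Lipschitz on $B$, and therefore uniformly continuous. To make it bounded, I replace it by $\min(f_\lambda,\lambda)$. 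This function is bounded, but the truncation by a minimum destroys convexity, so that step needs care. Instead I note that $f_\lambda\le \inf_B f+\lambda\cdot 2\ep$ is automatically bounded above on the bounded set $B$ whenever $f\not\equiv+\infty$ on $B$. It is also bounded below by $-m$, so no truncation is needed. The convergence $f_\lambda\uparrow f$ on $D$ again follows from lower semicontinuity, and monotone convergence finishes the argument. The statement for measures on $E\times E$ concentrated on $D\times D$ is identical, working in $(E\times E,d_2)$: $D\times D$ is convex in the product, and joint convexity of $d_2$ on $B\times B$ follows componentwise.

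The main obstacle I expect is the local joint convexity of $d$ on $\overline{B}(x_0,\ep)$ for $\kappa>0$. I would justify it by citing \cite{BH99} (convexity of the metric in $\CAT(\kappa)$ balls of radius $<D_\kappa/2$). A secondary point is checking that $f_\lambda\to f$ also at points where $f=+\infty$, which is where lower semicontinuity and the compactness-free argument $d(x,y_\lambda)\le (f_\lambda(x)+m)/\lambda\to0$ are used.
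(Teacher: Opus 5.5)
Your ``only if'' direction, the reduction to $f$ bounded below via $\max(f,-m)$, and the case $\kappa=0$ are correct and coincide with the paper's argument (inf-convolution with $\lambda d$, convex by Busemann convexity, then monotone convergence).

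\textbf{The gap is in the case $\kappa>0$.} Your claim that $d$ is jointly convex on $\overline{B}(x_0,\ep)\times\overline{B}(x_0,\ep)$ when $\ep<D_\kappa/2$ is false, and it fails at every scale. In a small $\CAT(\kappa)$ ball one only has convexity of the ball itself and of $d(z,\cdot)$ for a fixed $z$. Joint (Busemann) convexity of $d$ is a nonpositive-curvature phenomenon, and the convexity of the metric in \cite{BH99} is proved for $\CAT(0)$ spaces only.

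Concretely, on the unit sphere let $\gamma,\sigma$ be the meridians at longitudes $0$ and $\Delta$, parametrized by latitude $s\in[-s_1,s_1]$. The spherical law of cosines gives $\sin\bigl(d(\gamma(s),\sigma(s))/2\bigr)=\cos s\,\sin(\Delta/2)$. So $s\mapsto d(\gamma(s),\sigma(s))$ has a strict maximum at $s=0$, which no convex function of $s$ can have, however small $\Delta$ and $s_1$ are.

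The defect passes to your envelopes. Take $B$ a small ball centred on the equator at longitude $0$. Let $K$ be the arc of the meridian at longitude $0$ inside $B$ (closed and convex), and let $f=0$ on $K$ and $f=+\infty$ elsewhere. Then $f_\lambda=\lambda\,d(\cdot,K)$. Along $\sigma$, with $\Delta,s_1$ small compared with the radius of $B$ so that nearest points lie in $K$, one finds $f_\lambda(\sigma(s))=\lambda\arcsin(\cos s\,\sin\Delta)$, again strictly maximal at $s=0$. So $f_\lambda$ is not convex and is not an admissible test function. Replacing $\lambda d$ by $\lambda d^2$ fails for the same reason.

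\textbf{What is missing} is a jointly convex substitute for the distance, which is exactly what the paper uses. By Yokota's theorem (extending Kendall's construction), there is a jointly convex, uniformly continuous $\Phi$ on $B\times B$ with $c_0d^m\le\Phi\le C_0d^m$ for some $m>1$. With it, $f_n(x)=\inf_{u\in D}\{\varphi(u)+n\Phi(x,u)\}$ has the following properties:
\begin{itemize}
\item it is convex by exactly your infimal argument;
\item it is uniformly continuous and bounded on $B$;
\item it increases to $\varphi$, thanks to the two-sided comparison with $d^m$.
\end{itemize}
The same holds for its two-variable analogue $\inf_{(u,v)\in D\times D}\{\varphi(u,v)+n\Phi(x,u)+n\Phi(y,v)\}$. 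The existence of such a $\Phi$ is a genuinely nontrivial result. Without it, or some other source of jointly convex functions vanishing exactly on the diagonal, your scheme does not establish item (2).
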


\begin{proof}
(1)
Assume that the bound \eqref{eq:OC} holds for Lipschitz convex functions $f:E\to \R$ bounded from below.
Let $\varphi:E\to \mathbb{R}\cup\{+\infty\}$ be a l.s.c.\ convex function bounded from below.
Consider the sequence of functions $f_n:E \to \mathbb{R}$, $n\geq 1$, defined by $f_n(x):=\inf_{y\in E}\{\varphi(y)+nd(x,y)\}$, $x\in E$, $n\geq 1$.
It is well known that $f_n$ is $n$-Lipschitz continuous on $E$ and converges pointwise monotonically to $\varphi$ from below, as $n\to \infty$.
Moreover, $f_n$ is also convex on $E$.
Indeed, the distance $d$ being jointly convex (in other words, convex in the sense of Busemann; see e.g.\ \cite[Corollary 2.5]{SturmNPC}), for two geodesics $(x_t)_{t\in [0,1]}$ and $(y_t)_{t\in [0,1]}$, we get
\[
f_n(x_t) \leq \varphi(y_t)+nd(x_t,y_t) \leq (1-t)[\varphi(y_0)+nd(x_0,y_0)]+t[\varphi(y_1)+nd(x_1,y_1)]
\]
and so, taking the infimum over $y_0,y_1\in E$, gives
\[
f_n(x_t) \leq (1-t)f_n(x_0)+tf_n(x_1).
\]
By assumption, we have
\[
\int_E f_n\,d\eta \leq \int_E f_n\,d\nu,
\]
thus, by monotone convergence, letting $n\to \infty$ yields
\[
\int_E \varphi\,d\eta \leq \int_E \varphi\,d\nu.
\]
The assumption that $\varphi$ is bounded from below can be removed by considering $\max(\varphi,-k)$ (which is still convex) and letting $k\to \infty$.
So we conclude that it is enough to have \eqref{eq:OC} for all Lipschitz convex functions $f$ which are bounded from below to ensure that $\eta \lc \nu$.

(2)
The proof is similar, so we only point out main differences and we deal with the case of measures $\eta, \nu \in \mathcal{P}_1(E\times E)$ which is more general.

Set $B:=\overline{B}(x_0,\ep)$ with $\ep <D_\kappa/2$.
We use an approximation procedure from \cite{YokotaCAT1}.
Given a l.s.c.\ convex function $\varphi:D\times D\to \R$ bounded from below, the sequence $f_n$ is now defined by
\[
f_n(x,y):=\inf_{(u,v) \in D\times D} \{\varphi(u,v)+n\Phi(x,u)+n\Phi(y,v)\},\qquad (x,y)\in B^2,\ n\geq 1,
\]
where $\Phi:B\times B \to \R$ is a jointly convex function satisfying
\begin{equation}\label{eq:ineq-Yokota}
c_0\,d^m (x,y) \leq \Phi(x,y) \leq C_0\,d^m(x,y),\qquad \forall x,y\in B,
\end{equation}
for some $c_0,C_0>0$ and $m>1$.
The existence of such a function $\Phi$ is granted by \cite[Theorem A]{YokotaCAT1} (see also (18) in \cite{YokotaCAT1}), which extends to $\CAT(\kappa)$ spaces with $\kappa>0$ a construction going back to \cite{MR535705,Kendall}.
Reasoning exactly as in the first part of the proof, we see that $f_n$ is convex on $B\times B$.
Moreover, it can be checked on the explicit formula given in \cite{YokotaCAT1} that $\Phi$ is uniformly continuous on $B\times B$, so that $f_n$ is uniformly continuous on $B\times B$ as well.
It is also easy to check that $f_n$ converges to $\varphi$ pointwise monotonically.
Then we conclude as in the first part of the proof.
\end{proof}

\subsubsection{Barycenters.}\label{sec:bary}
Let $(E,d)$ be a complete $\CAT(0)$ space.
We recall that, given $p \in \mathcal{P}_1(E)$, the \emph{barycenter} of $p$ is the unique minimizer of
\[
y \mapsto \int_E \{d^2(y,x)-d^2(o,x)\} \,p(dx),
\]
where $o \in E$ is an arbitrary fixed point.
We refer to \cite[Proposition 4.3]{SturmNPC} for existence and uniqueness.
The barycenter of $p$ does not depend on $o$, and will be denoted by $b(p)$ throughout what follows.
According to e.g.\ \cite[Theorem 6.2]{SturmNPC}, any l.s.c.\ and convex function $f:E\to \R$ satisfies \emph{Jensen's inequality}
\begin{equation}\label{eq:Jensen}
\int_E f\,dp \geq f\bigl(b(p)\bigr).
\end{equation}
Note that the integral on the left hand side is well defined; see \cite[Lemma 2.3.7]{Bacak}.
If $p\in \mathcal{P}_2(E)$, then $b(p)$ admits the more classical characterization of being the unique minimizer of
\[
y \mapsto \int_E d^2(y,x)\,p(dx).
\]
The barycenter map satisfies the following important contraction property (see \cite[Theorem 6.3]{SturmNPC}):
\begin{equation}\label{eq:W1contraction}
d\bigl(b(p),b(q)\bigr) \leq W_1(p,q),\qquad \forall p,q \in \mathcal{P}_1(E).
\end{equation}

For $\kappa>0$ there is also a local analog, due to Kuwae \cite{KuwaeJensen} and Yokota \cite[Theorems B, 25]{YokotaCAT1}.

\begin{thm}\label{thm:barycenter}
Let $(E,d)$ be a complete $\CAT(\kappa)$ space for $\kappa > 0$.
Let $p \in \mathcal{P}(E)$ and suppose that there exist $x_0 \in E$ and $\ep \in (0,D_\kappa/2)$ such that $\operatorname{supp} p \subset \overline{B}(x_0,\ep)$.
Then, the function $y\mapsto \int_E d^2(y,x)\,p(dx)$ admits a unique minimizer, denoted by $b(p) \in E$, which belongs to $\overline{B}(x_0,\ep)$ and is called the barycenter of $p$.
Moreover, for any l.s.c.\ convex function $f:\overline{B}(x_0,\ep) \to \R$, Jensen's inequality \eqref{eq:Jensen} is satisfied.
\end{thm}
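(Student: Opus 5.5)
Since this statement is attributed to Kuwae and Yokota, the most natural proof proceeds in three steps: existence of a minimizer, uniqueness of the minimizer, and finally Jensen's inequality. Throughout, write $B:=\overline{B}(x_0,\ep)$ and $F(y):=\int_E d^2(y,x)\,p(dx)$. Since $p$ is supported in the bounded set $B$, $F$ is finite and continuous (indeed locally Lipschitz) on $E$.

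\textbf{Step 1: minimizers lie in $B$.} I will first show that every minimizer (and every approximate minimizer, up to an error) lies in $B$. The plan is to use the metric projection $P_B$ onto $B$. Balls of radius $\ep<D_\kappa/2$ are convex in a $\CAT(\kappa)$ space and admit a well-defined projection. In a $\CAT(\kappa)$ space with $\ep<D_\kappa/2$, one has the comparison fact $d(P_B y,x)\le d(y,x)$ for all $x\in B$, with strict inequality when $y\notin B$. This follows from the angle at $P_B y$ being at least $\pi/2$ together with spherical comparison for triangles of perimeter $<2D_\kappa$. Consequently $F(P_By)\le F(y)$, and the minimization may be restricted to $B$.

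\textbf{Step 2: existence and uniqueness on $B$.} On $B$, Lemma \ref{lem:Ohta}, applied with center $x$ (where $x,x_0$ both lie within the ball; if needed I shrink or use $B\subset \overline{B}(x,2\ep)$ and rather work with the $k$-convexity from the point $x\in B$), gives that $F$ is $k$-convex with $k>0$. The concern is that $2\ep$ may exceed $D_\kappa/2$. To avoid this, I would instead invoke $k$-convexity of $d^2(x,\cdot)$ along geodesics within $B$ using the triangle comparison directly: the relevant triangles have perimeter at most $4\ep<2D_\kappa$, so the spherical law of cosines yields a uniform modulus of strict convexity.

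With uniform convexity in hand, take a minimizing sequence $y_n\in B$. The midpoint inequality gives
\[ F(m_{nm})\le \tfrac12F(y_n)+\tfrac12F(y_m)-c\,d^2(y_n,y_m), \]
so $(y_n)$ is Cauchy. Completeness of $E$ and closedness of $B$ then give a minimizer $b(p)\in B$. The same inequality applied to two distinct minimizers yields a contradiction, which proves uniqueness.

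\textbf{Step 3: Jensen's inequality.} This is the main obstacle. I would follow Kuwae's approach through approximation by finitely supported measures. For a finitely supported $p$, Jensen's inequality is proved by induction on the number of atoms via iterated geodesic means. This does not directly produce $b(p)$, however, so a cleaner route is needed.

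The cleaner route uses Lemma \ref{lem:tech}(2)-type approximations. It suffices to consider $f$ convex and uniformly continuous on $B$, since a l.s.c.\ convex $f$ is the monotone limit of such functions. For such $f$, consider the perturbed minimizers $y_t:=\argmin_{y\in B}\{F(y)+t f(y)\}$. The first-order variation along the geodesic from $y_t$ to each point $x$, combined with the comparison estimate on the directional derivative of $d^2(\cdot,x)$, should yield
\[ f(y_t)\le \int f\,dp + o(1). \]
Letting $t\to 0$ gives $y_t\to b(p)$ by uniqueness and stability, and continuity of $f$ then yields the claim.

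The delicate point is precisely this first-variation inequality: in positive curvature the derivative of $d^2(\cdot,x)$ only satisfies a one-sided bound, with a multiplicative factor that is close to $1$ only when the radii are small. If the perturbation argument cannot be closed, I would fall back on citing Yokota \cite[Theorem 25]{YokotaCAT1} for this step.
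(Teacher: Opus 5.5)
\textbf{There is a genuine gap in Steps 1 and 2 whenever $\ep>D_\kappa/4$}, that is, as soon as $\operatorname{diam}\overline{B}(x_0,\ep)$ can exceed $D_\kappa/2$.

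Your workaround in Step 2 does not work. Comparison with triangles of perimeter $<2D_\kappa$ only transfers to $E$ the convexity that already holds in the model $M^2_\kappa$. In the model itself, $d^2(x,\cdot)$ is not convex beyond distance $D_\kappa/2$ from $x$. At distance $r$, its second derivative along a unit-speed geodesic orthogonal to the radial direction is $2\sqrt{\kappa}\,r\cot(\sqrt{\kappa}\,r)$, which is negative for $r>D_\kappa/2$.

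Here is a concrete instance on the unit sphere with $\ep=0.45\pi$ and $x_0$ the north pole. Take $p=\delta_x$ with $x\in\partial B$. Take $w$ in the interior of $B$ on the opposite meridian, so that $d(x,w)=0.85\pi$. Then $F=d^2(x,\cdot)$ is strictly concave along a short geodesic through $w$ contained in $B$. So your midpoint inequality is false for $F$, and with it both the Cauchy argument for existence and the uniqueness argument collapse.

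Step 1 fails for the same reason. An angle $\ge\pi/2$ at $z=P_By$ yields $d(y,x)\ge d(z,x)$ in the model only when $d(z,x)\le D_\kappa/2$. On the same sphere, take $z,x\in\partial B$ diametrically opposite on the boundary circle, so $d(z,x)=0.9\pi$. Take $y$ on the meridian of $z$ at colatitude $0.75\pi$. Then $P_By=z$, but $d(y,x)=0.8\pi<d(z,x)$. Hence for $p=\delta_x$ you get $F(P_By)>F(y)$, and the minimization cannot be restricted to $B$ this way.

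For $\ep<D_\kappa/4$ your Steps 1--2 are correct. In that range all distances in $B$ are $<D_\kappa/2$, Lemma \ref{lem:Ohta} applies on $\overline{B}(x,2\ep)\supset B$, and the projection comparison holds. The statement, however, covers all $\ep<D_\kappa/2$. In the range $[D_\kappa/4,D_\kappa/2)$, existence, uniqueness and localization of the minimizer are genuinely non-convex problems. They are what the Kendall--Yokota machinery handles, namely convex functions on $B\times B$ comparable to $d^m$ as in \cite[Theorem A]{YokotaCAT1}, which the paper also uses in Lemma \ref{lem:tech}.

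This is why the paper gives no proof at all and cites \cite{KuwaeJensen} and \cite[Theorems B, 25]{YokotaCAT1} for the whole statement. Your Step 3 is not an argument either. The perturbed minimizers $y_t$ are not even well defined without the convexity of $F$ that you lack, and the first-variation inequality is never derived. So your proof ultimately reduces to the same citation. You would then have to cite it for existence, uniqueness and localization as well, not only for Jensen's inequality.
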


\subsubsection{Convex means.}\label{sec:convex-means}
Slightly extending the definition of \cite{SturmNPC}, if $(E,d)$ is a geodesic space and $p \in \mathcal{P}_1(E)$, we will say that a point $x \in E$ is a \emph{convex mean} of $p$ if $\delta_x \lc p$, that is,
\[
f(x) \leq \int_E f\,dp
\]
for any l.s.c.\ convex function $f:E\to \R\cup \{+\infty\}$ such that $\int_E [f]_-\,dp<+\infty$.
If $(E,d)$ is a $\CAT(\kappa)$ space with $\kappa\geq0$, then the class of convex test functions can be reduced, under the assumptions of Lemma \ref{lem:tech}.
The set of all convex means of $p$ will be denoted by $C(p)$ in the sequel.
Whenever the barycenter $b(p)$ is available, Jensen's inequality (together with Lemma \ref{lem:tech} to allow functions with values in $\R\cup\{+\infty\}$) gives $b(p)\in C(p)$.
In particular, we know that $C(p)$ is nonempty for $p\in\mathcal{P}_1(E)$ in the $\kappa=0$ case or in the setting of Theorem \ref{thm:barycenter} in the $\kappa>0$ case.
The set $C(p)$ is always a closed convex subset of $E$.
In the $\kappa=0$ case, $C(p)$ is bounded for every $p\in \mathcal{P}_1(E)$, since for any $x\in C(p)$, it holds
\[
d(x_0,x) \leq \int_E d(x_0,y)\,p(dy),
\]
where $x_0$ is an arbitrary point in $E$, by the convexity of the function $d(x_0,\,\cdot\,)$.
In the $\kappa>0$ case, $C(p)$ is also obviously bounded, whenever $p$ satisfies the assumptions of Theorem \ref{thm:barycenter}.
If $D \subset E$ is a closed convex set such that $p(D)= 1$, then $C(p) \subset D$.

\begin{lem}[Stability along geodesics]\label{rem:convex_mean_geodesic}
Let $p_0,p_1$ be two probability measures on $E$.
Consider a geodesic $(x_t)_{0 \leq t \leq 1}$.
If $x_0$ is a convex mean of $p_0$ and $x_1$ a convex mean of $p_1$, then $x_t$ is a convex mean of $(1-t)p_0 + tp_1$.
\end{lem}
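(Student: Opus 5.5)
Fix an l.s.c.\ convex function $f:E\to\R\cup\{+\infty\}$ which is admissible for the mixture, i.e.\ with $\int_E [f]_-\,d\bigl((1-t)p_0+tp_1\bigr)<+\infty$. We need to show $f(x_t)\le (1-t)\int_E f\,dp_0 + t\int_E f\,dp_1$. The argument uses only the definitions: convexity of $f$ along the geodesic $(x_t)$, plus the defining inequality of a convex mean applied separately to $p_0$ and $p_1$.

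The first step is to check that $f$ is admissible for $p_0$ and $p_1$ individually. For $t\in(0,1)$ we have
\[
\int_E [f]_-\,d\bigl((1-t)p_0+tp_1\bigr)=(1-t)\int_E [f]_-\,dp_0+t\int_E [f]_-\,dp_1 ,
\]
and both coefficients are positive. So finiteness of the left-hand side gives $\int_E[f]_-\,dp_i<+\infty$ for $i=0,1$. Hence $\int_E f\,dp_i$ is well defined in $\R\cup\{+\infty\}$, and since $x_i\in C(p_i)$ we get $f(x_i)\le \int_E f\,dp_i$ for $i=0,1$.

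The second step combines these. By convexity \eqref{eq:conv},
\[
f(x_t)\le (1-t)f(x_0)+tf(x_1)\le (1-t)\int_E f\,dp_0+t\int_E f\,dp_1=\int_E f\,d\bigl((1-t)p_0+tp_1\bigr).
\]
The last equality is linearity of the integral. It is legitimate because each integral takes values in $\R\cup\{+\infty\}$, so no $\infty-\infty$ ambiguity arises. When values are $+\infty$, the inequalities hold with the usual conventions, since $f(x_i)>-\infty$ and everything is bounded below.

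The endpoint cases $t=0,1$ are trivial, since the mixture is then $p_0$ or $p_1$ and the point is $x_0$ or $x_1$. The only delicate point is the integrability bookkeeping in the first step. If $C(p)$ is meant with respect to the reduced test classes of Lemma~\ref{lem:tech}, the same argument applies verbatim, because those classes consist of convex functions as well.
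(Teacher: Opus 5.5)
Your proof is correct and follows the paper's argument: convexity of $f$ along $(x_t)$, then the convex-mean inequalities for $p_0$ and $p_1$, then linearity of the integral. Your extra check that $\int_E [f]_-\,dp_i<+\infty$ for $t\in(0,1)$, which makes $f$ an admissible test function for each $p_i$ separately, is a detail the paper leaves implicit.
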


\begin{proof}
Let $f$ be a l.s.c.\ convex function on $E$.
By the convexity of $f$ and the assumptions on $x_0$ and $x_1$,
\[
f(x_t)\leq (1-t)f(x_0)+t f(x_1)
\leq (1-t)\int_E f\,dp_0+t\int_E f\,dp_1
=\int_E f\,d\bigl((1-t)p_0+tp_1\bigr).
\qedhere
\]
\end{proof}

\section{A weak optimal transport formulation}\label{sec:wot}
\subsection{Definition and main result}

Let $(E,d)$ be a complete separable $\CAT(\kappa)$ space with $\kappa \geq 0$.
We define the weak cost function $c : E \times \mathcal{P}_2(E) \to [0,+\infty]$ by
\[
c(x,p) := d^2\bigl(x, C(p)\bigr),
\]
where $d(x,B) := \inf_{y\in B} d(x,y)$ denotes the usual distance from a point to a set and $C(p)$ is the set of convex means of $p$ introduced in Section \ref{sec:convex-means}.
If $C(p)$ is empty, we set $d(x,C(p))=+\infty$.
This section is devoted to studying the weak optimal transport problem
\[
\mathcal{T}_c(\mu,\nu) := \inf_{p:\mu p = \nu} \int_E c(x,p_x)\,\mu(dx),
\]
where the infimum runs over the set of probability kernels $p=(p_x)_{x\in E}$ such that $\int_E p_x(dy)\,\mu(dx) = \nu(dy)$.
This class of transport problems was introduced in \cite{GRST17} (see also \cite{ABC19}); we refer to \cite{BP-survey} for a survey and to \cite{GRST17,ABC19,BBP19,Beiglbock-Fundamental} for theoretical results (existence of optimizers, Kantorovich-type duality, cyclical monotonicity) available in this framework.

In the $\kappa>0$ case, we will always assume for $\mu,\nu \in \mathcal{P}_2(E)$ that the support of $\nu$ is contained in a closed convex set $D$ such that
\begin{equation}\label{eq:kappa>0}
\varepsilon_{\mu,\nu}:=\sup_{x \in \operatorname{supp} \mu} \sup_{y \in D} d(x,y) < \frac{D_\kappa}{2}.
\end{equation}
To unify the cases, when $\kappa = 0$, we will always use the convention $D=E$.

The goal of this section is to prove the following theorem, which contains Theorem \ref{thm:projection_WOT} in the case $\kappa = 0$.

\begin{thm}\label{thm:main-WOT}
Let $(E,d)$ be a complete separable $\CAT(\kappa)$ space with $\kappa \geq 0$, and let $\mu,\nu \in \mathcal{P}_2(E)$.
In the $\kappa>0$ case, we assume \eqref{eq:kappa>0}. Then
\[
\min_{\eta \lc \nu} W_2^2(\mu,\eta) \;=\; \min_{p:\mu p = \nu} \int_E d^2\bigl(x,C(p_x)\bigr)\,\mu(dx) \;=\; \max_{f:\text{convex}} \left\{ \int_E Qf\,d\mu - \int_E f\,d\nu \right\},
\]
where the maximum is taken over all lower semicontinuous, $\nu$-integrable, convex functions $f:E \to \R \cup \{+\infty\}$ and $Qf(x) := \inf_{y\in D}\, \{f(y)+d^2(x,y)\}$.
The solution $\bar \mu \lc \nu$ of the former minimizing problem is unique.
Moreover, solutions $\bar{\mu},p,f$ of these three problems are related by a map $T$ defined $\mu$-almost everywhere as follows:
\begin{enumerate}
\item The plan $(\mathrm{id},T)_\#\mu$ is the unique optimal transport plan from $\mu$ to $\bar{\mu}$;
\item For $\mu$-almost all $x$, $T(x)$ is the metric projection of $x$ to $C(p_x)$;
\item $T$ is the proximal operator associated with the function $f$, i.e., for $\mu$-almost all $x$,
\[ T(x) = \argmin_{y \in D} \Bigl\{f(y) + d^2(x,y)\Bigr\}.
\]
\end{enumerate}
\end{thm}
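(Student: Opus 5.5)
The plan is to prove the chain of (in)equalities by establishing three inequalities that close a cycle, then extract the structural statements from the equality cases. Write $V_1:=\inf_{\eta\lc\nu}W_2^2(\mu,\eta)$, $V_2:=\inf_{p:\mu p=\nu}\int d^2(x,C(p_x))\,\mu(dx)$ and $V_3:=\sup_f\{\int Qf\,d\mu-\int f\,d\nu\}$.

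\emph{Weak duality $V_3\le V_1$ and $V_3\le V_2$.} For $\eta\lc\nu$ and any coupling $\pi\in\Pi(\mu,\eta)$, the definition of $Qf$ gives $Qf(x)\le f(y)+d^2(x,y)$ for $y\in D$, where $\eta(D)=1$ because $\nu(D)=1$ and $D$ is closed convex. Integrating yields $\int Qf\,d\mu\le\int f\,d\eta+W_2^2\le\int f\,d\nu+W_2^2(\mu,\eta)$. Similarly, for a kernel $p$ and any $y\in C(p_x)$ we have $Qf(x)\le f(y)+d^2(x,y)\le \int f\,dp_x+d^2(x,y)$. Taking the infimum over $y$ and integrating in $\mu$ gives $V_3\le V_2$.

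\emph{The inequality $V_2\le V_1$ via the martingale-type step.} Given $\eta\lc\nu$, I will use a Strassen-type disintegration: there is a kernel $q$ with $\eta q=\nu$ and $y\in C(q_y)$ for $\eta$-a.e.\ $y$. I plan to obtain this by Hahn--Banach/minimax on $\mathcal P(E\times E)$ using the continuous convex test functions of Lemma~\ref{lem:tech}. Composing with an optimal coupling $\pi(dx,dy)=\mu(dx)k_x(dy)$ gives $p_x:=\int q_y\,k_x(dy)$. Lemma~\ref{rem:convex_mean_geodesic}, extended to mixtures via the barycenter of the pushforward of $k_x$ in $C$ (or more directly Jensen), shows that $C(p_x)$ contains $b(\text{law of }y\text{ under }k_x)$. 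Then $d^2(x,C(p_x))\le d^2(x,b(k_x))\le\int d^2(x,y)k_x(dy)$ by the convexity of $d^2(x,\cdot)$ and Jensen. In the $\kappa>0$ case I will use Theorem~\ref{thm:barycenter} and the assumption \eqref{eq:kappa>0} instead.

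\emph{The inequality $V_1\le V_2$, attainment, and no duality gap.} Conversely, given $p$, choose $T(x)=$ the metric projection of $x$ onto the closed convex set $C(p_x)$. This projection exists uniquely and is measurable by the $2$-convexity of $d^2(x,\cdot)$ in Lemma~\ref{lem:Ohta} or \eqref{eq:CAT0}. Set $\eta=T_\#\mu$; then $\eta\lc\nu$ since $f(T(x))\le\int f\,dp_x$. This gives $V_1\le V_2$, hence $V_1=V_2$. Existence of minimizers follows from lower semicontinuity of $p\mapsto d^2(x,C(p))$ on $E\times\mathcal P_2$, which is the continuity part of the section, together with tightness of $\{p:\mu p=\nu\}$ and the general existence theory \cite{BBP19}. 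The equality $V_2=V_3$ is the main obstacle. I would first apply the Kantorovich duality for weak transport \cite{BBP19,GRST17}, valid for l.s.c.\ costs convex in $p$, yielding $V_2=\sup_\varphi\{\int R_c\varphi\,d\mu-\int\varphi\,d\nu\}$ with $R_c\varphi(x)=\inf_p\{\int\varphi\,dp+c(x,p)\}$. Convexity of $c(x,\cdot)$ follows from Lemma~\ref{rem:convex_mean_geodesic} combined with the $2$-convexity of $d^2(x,\cdot)$. Then I would show $R_c\varphi=Q(\hat\varphi)$, where $\hat\varphi(y):=\inf\{\int\varphi\,dp: y\in C(p)\}$ is the largest convex minorant. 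This holds because $y\mapsto\hat\varphi(y)$ is convex by Lemma~\ref{rem:convex_mean_geodesic} and satisfies $\hat\varphi\le\varphi$. Attainment of the max will require a compactness argument on convex functions normalized via $Q$ (a double-convexification). I would try to pass instead through the limit of optimizing sequences and uniqueness to identify $f$.

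\emph{Uniqueness and the map $T$.} Uniqueness of $\bar\mu$ follows from strict convexity: if $\bar\mu_0,\bar\mu_1$ are optimal, push $\mu$ by the pointwise geodesic midpoint of optimal couplings. The midpoint measure is still $\lc\nu$ (convex test functions), and \eqref{eq:CAT0} or Lemma~\ref{lem:Ohta} gives a strictly smaller cost unless the plans coincide. The same argument gives that the optimal plan is unique and deterministic; splitting $\pi$ along $x$ would contradict strictness. Finally, in the equality case of the weak duality chain, $Qf(x)=f(y)+d^2(x,y)$ for $\pi$-a.e.\ $(x,y)$, and $y$ realizes $d(x,C(p_x))$. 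The minimizer of $f+d^2(x,\cdot)$ is unique by strict convexity on $D$, which identifies $T$ simultaneously with the projection onto $C(p_x)$ and with the proximal operator of $f$, giving (1)--(3).
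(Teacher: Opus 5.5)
Your inequalities do give the equality of values, and your midpoint argument for uniqueness matches the paper. What is missing is attainment of the dual problem. The statement asserts a \emph{maximum} over l.s.c.\ convex $f$, and your proof of item (3) runs the equality case of weak duality with an optimal $f$. You never produce such an $f$: you defer to ``a compactness argument on convex functions normalized via $Q$'' or ``the limit of optimizing sequences and uniqueness''. Neither works here.
\begin{itemize}
\item $E$ is only complete and separable; an infinite-dimensional Hilbert space is allowed. So there is no Arzel\`a--Ascoli or local-boundedness compactness for families of convex functions.
\item Uniqueness of $\bar\mu$ or of $T$ gives no control on an optimizing sequence $(f_n)$. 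Uniqueness can identify a limit point only after one is known to exist.
\end{itemize}
Without an optimal $f$, your argument yields (1), (2) and the equality of values with a supremum on the right, but neither the max nor (3). The paper fills exactly this gap with the fundamental theorem of weak optimal transport \cite[Theorem 1.2]{Beiglbock-Fundamental}, which gives primal and dual attainment. It requires checking two conditions:
\begin{itemize}
\item condition (C): continuity of $c$ on $\widetilde D\times\mathcal P_2(D)$ and convexity in $p$, which is Proposition~\ref{lem:lsc};
\item condition (B): the growth bound $c(x,p)\le 2d^2(x,x_0)+2\int d^2(x_0,y)\,p(dy)$.
\end{itemize}
A dual optimizer $\varphi$ is then replaced by the l.s.c.\ envelope of its convex envelope, which stays optimal by \eqref{eq:envelope-ineq}. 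Only after that does your equality-case argument identify $T$ as the proximal map.

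Three secondary points. First, your step $V_2\le V_1$ via a Strassen-type disintegration is unnecessary, and it is not available a priori. In the paper, Theorem~\ref{thm:strassen} is \emph{deduced} from this duality, and a direct Hahn--Banach proof without local compactness is not routine. The chain $V_2=\sup_\varphi\{\int R_c\varphi\,d\mu-\int\varphi\,d\nu\}\le V_3\le V_1\le V_2$ already closes the cycle. Second, for the middle inequality $\sup_\varphi\{\cdots\}\le V_3$, your $\hat\varphi$ need not be l.s.c., so it is not yet in the class of the statement. You must pass to $\widetilde{\hat\varphi}$, which is convex by Lemma~\ref{lem:lsc-envelope} and has the same $Q$ by Lemma~\ref{lem:infconv-lsc}. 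You must also check $\widetilde{\hat\varphi}\in L^1(\nu)$, using $\widetilde{\hat\varphi}(y)\ge R_c\varphi(x_0)-d^2(x_0,y)$ and $\widetilde{\hat\varphi}\le\varphi$. Third, measurability of $T$ does not follow from the $2$-convexity of $d^2(x,\cdot)$. It comes from the Hausdorff stability of $C(p)$ (Proposition~\ref{prop:stability}) combined with Lemma~\ref{lem:joint-projection}.
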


Note that our definition of proximal operator differs from the usual one by a factor $1/2$ (in other words, $T$ is the operator at time $1/2$; cf.\ \cite{Bacak}).
Note also that in the case of the flat Euclidean space $E = \mathbb{R}^d$, the set of convex means of a measure $p$ only contains its barycenter, and the cost $c$ is simply the barycentric cost $|x - \int_{\R^d} y \,p(dy)|^2$; hence Theorem \ref{thm:main-WOT} generalizes \cite[Theorem 1.2]{GJ20}.
See also \cite{Kim-Ruan,pramenkovic2025} for similar projection problems involving general stochastic orders.

In the first part of the section, we study in detail the cost function $c : (x,p) \mapsto d^2(x,C(p))$: we show that it is continuous with respect to the product topology on $E\times \mathcal{P}_2(E)$ (with the appropriate localization when $\kappa>0$). The second part of the section is devoted to the proof of Theorem \ref{thm:main-WOT}.

\subsection{Study of the cost function}
Recall that $c$ denotes the cost function defined in the preceding subsection.

\begin{prop}\label{lem:lsc}
Let $(E,d)$ be a complete separable $\CAT(\kappa)$ space with $\kappa \geq 0$.
\begin{enumerate}
\item If $\kappa = 0$, then $c$ is continuous on $E\times \mathcal{P}_2(E)$ for the product topology, and is convex in the second variable with respect to convex interpolations.
\item When $\kappa > 0$, let $D$ be a closed convex set included in a closed ball of radius $\varepsilon <D_\kappa/2$ and set $\widetilde D:=\{x \in E : \sup_{y \in D} d(x,y) \leq \ep \}$.
Then $c$ is finite valued, continuous for the product topology, and  convex in the second variable on $\widetilde{D}\times \mathcal{P}_2(D)$ with respect to convex interpolations.
\end{enumerate}
\end{prop}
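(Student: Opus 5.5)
The convexity statement is the easy part, so I would prove it first. Fix $x$ and $p_0,p_1$ with $C(p_0),C(p_1)$ nonempty, closed, convex and bounded (Section~\ref{sec:convex-means}). Let $y_i$ be the metric projection of $x$ onto $C(p_i)$; it exists and is unique in a complete $\CAT(0)$ space, and in the $\kappa>0$ case inside the small ball. Let $(y_t)$ be the geodesic from $y_0$ to $y_1$. By Lemma~\ref{rem:convex_mean_geodesic}, $y_t\in C\bigl((1-t)p_0+tp_1\bigr)$. Then \eqref{eq:CAT0} gives
\[
c\bigl(x,(1-t)p_0+tp_1\bigr)\le d^2(x,y_t)\le (1-t)c(x,p_0)+t\,c(x,p_1).
\]
In case (2) I would use Lemma~\ref{lem:Ohta} instead. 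Since $C(p_i)\subset D$ and $x\in\widetilde D$, both $y_0,y_1$ lie in $\overline B(x,\ep)$, and $k$-convexity with $k>0$ implies plain convexity. Finiteness in case (2) follows because $b(p)\in C(p)\subset D$ by Theorem~\ref{thm:barycenter}.

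For continuity I would separate the two variables. The map $x\mapsto d(x,C(p))$ is $1$-Lipschitz, so it suffices to show that $p\mapsto d(x,C(p))$ is continuous, locally uniformly in $x$.

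\emph{Lower semicontinuity.} Let $p_n\to p$ in $W_2$ and pick $y_n\in C(p_n)$ with $d(x,y_n)\le d(x,C(p_n))+1/n$. The $y_n$ are bounded because $d(x_0,y_n)\le\int d(x_0,\cdot)\,dp_n$. In a complete $\CAT(0)$ space a bounded sequence has a $\Delta$-convergent (weakly convergent) subsequence, and I would extract one converging to some $y$ along a subsequence realizing the $\liminf$. Convex l.s.c.\ functions are weakly l.s.c., so $d(x,y)\le\liminf d(x,y_n)$. For every Lipschitz convex $f$ bounded below,
\[
f(y)\le\liminf f(y_n)\le\liminf\int f\,dp_n=\int f\,dp,
\]
where the equality uses $W_1$-convergence. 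By Lemma~\ref{lem:tech}, this gives $y\in C(p)$. In case (2) the same argument should go through with the uniformly continuous test functions of Lemma~\ref{lem:tech}(2) and weak convergence in the small ball, where a $\Delta$-compactness statement is available.

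\emph{Upper semicontinuity.} This is the step I expect to be the main obstacle. It amounts to a stability estimate: for $y\in C(p)$ and $q$ close to $p$, one needs some $z\in C(q)$ with $d(y,z)$ small, ideally $d(y,z)\le W_1(p,q)$ as in the Euclidean case. My plan is to lift the problem to the product $\CAT(0)$ space $(E\times E,d_2)$.

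First take an optimal coupling $\pi$ of $p$ and $q$. Any convex mean $(u,v)$ of $\pi$ satisfies $u\in C(p)$ and $v\in C(q)$, since $f\circ\mathrm{pr}_i$ is convex. It also satisfies $d(u,v)\le\int d\,d\pi=W_1(p,q)$, since $d$ is jointly convex. What remains is to show that the given $y$ arises as a first coordinate, i.e.\ that $\mathrm{pr}_1\bigl(C(\pi)\bigr)=C(p)$.

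I would first try a minimax/Hahn--Banach argument. Suppose no $(y,v)$ is a convex mean of $\pi$. Then a separating convex function $F$ on $E\times E$ should exist with $F(y,v)>\int F\,d\pi$ for all $v$. Its marginal infimum $f(u):=\inf_v\bigl[F(u,v)-\text{(compensation)}\bigr]$ would then be convex, and applying $y\in C(p)$ to $f$ would give a contradiction. Here $f$ is convex by the same infimal-convolution argument used in Lemma~\ref{lem:tech}.

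If this lifting fails in general, my fallback is to prove only what continuity needs. I would disintegrate $\pi$, push $p$ through $u\mapsto b(\pi_u)$ to obtain a measure dominated by $q$ in convex order, and combine Lemma~\ref{rem:convex_mean_geodesic} with approximation of $p$ by finitely supported measures. The aim would be to produce convex means of $q$ near $y$ with an error $o(1)$ as $W_2(p,q)\to0$.
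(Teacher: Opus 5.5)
Your convexity argument is the same as the paper's. Your lower semicontinuity argument is a valid alternative for $\kappa=0$; for $\kappa>0$ it can be rescued by applying Lemma~\ref{lem:fip} to closed convex hulls of tails instead of $\Delta$-compactness. The genuine gap is upper semicontinuity. You correctly reduce it to $\operatorname{pr}_1(C(\pi))=C(p)$ for $\pi\in\Pi(p,q)$, which is exactly the paper's Proposition~\ref{prop:stability}(1). But the step ``a separating convex function $F$ on $E\times E$ should exist'' is the whole difficulty, and you give no mechanism for it. Set $K_\Phi:=\{v:\Phi(y,v)\le\int\Phi\,d\pi\}$, so the fiber is $\{v:(y,v)\in C(\pi)\}=\bigcap_\Phi K_\Phi$. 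Since $E\times E$ has no linear structure, there is no Hahn--Banach theorem, and emptiness of this intersection does not by itself produce a single test function.

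The missing idea is compactness plus finite-dimensional separation. By Lemma~\ref{lem:fip}, it suffices to show that finitely many $K_{\Phi_0},\dots,K_{\Phi_m}$ intersect, provided one of them is bounded; you force this by including $\Phi_0(u,v)=d(o,v)$. For finitely many functions, the set $L=\{r\in\R^{m+1}:\exists v,\ \Phi_i(y,v)\le r_i\ \forall i\}$ is convex, upward closed, and closed (by Lemma~\ref{lem:fip} again, on closed convex hulls of tails of bounded witnesses). So if $(\int\Phi_i\,d\pi)_i\notin L$, a hyperplane in $\R^{m+1}$ gives weights $a\in\R^{m+1}_+\setminus\{0\}$, and $F=\sum_ia_i\Phi_i$ is your separating function. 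Then $g(u)=\inf_vF(u,v)$, with no compensation term, is convex by joint convexity, uniformly continuous and bounded below. Hence $g(y)>\int F\,d\pi\ge\int g\,dp$ contradicts $y\in C(p)$.

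Your fallback does not close the gap. The measure $\eta=(u\mapsto b(\pi_u))_\#p\lc q$ gives $C(\eta)\subset C(q)$, but you still need a convex mean of $\eta$ near $y$, which is the original problem. Moreover, convex means of finitely supported measures are not in general generated by iterated geodesic interpolation, so Lemma~\ref{rem:convex_mean_geodesic} cannot produce them.

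Case (2) has a second, independent gap. The bound $d(u,v)\le\int d\,d\pi=W_1(p,q)$ uses joint convexity of $d$, which fails for $\kappa>0$ even in small balls. On a sphere, take two geodesics of speed $L$ issuing from a common point: to first order in their angle, the distance at time $t$ is proportional to $\sin(tL)$, which exceeds $t\sin L$. The paper replaces $d$ by Yokota's jointly convex $\Phi$ with $c_0d^m\le\Phi\le C_0d^m$ on the ball. This only yields $d_H(C(p),C(q))\le C\,W_1(p,q)^{1/m}$, but that suffices: combined with $|d(x,C(p))-d(y,C(q))|\le d(x,y)+d_H(C(p),C(q))$, it gives joint continuity at once. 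A separate lower semicontinuity argument is then unnecessary.
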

Note that, under \eqref{eq:kappa>0}, $D \subset \overline{B}(x_0,\ep_{\mu,\nu})$ holds for any $x_0 \in \operatorname{supp}\mu$, and $\operatorname{supp}\mu \subset \widetilde{D}$ with $\ep=\ep_{\mu,\nu}$.
We first record a variant of Cantor's intersection theorem for complete $\CAT(\kappa)$ spaces which will be used in the proof; see e.g.\ \cite[Proposition 2.1.16]{Bacak} and \cite[Lemma 5.1]{CL10}.

\begin{lem}[Finite-intersection property]\label{lem:fip}
Let $(E,d)$ be a complete $\CAT(\kappa)$ space with $\kappa \geq 0$.
Let $(K_i)_{i\in I}$ be a family of nonempty closed convex subsets of $E$, such that every finite subfamily has nonempty intersection and one member of the family is included in a ball of radius $< D_\kappa/2$.
Then $\bigcap_{i\in I} K_i \neq \varnothing$.
\end{lem}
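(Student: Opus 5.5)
The natural route is through nearest-point projections onto the one member of the family that lies in a small ball, say $K_{i_0}\subset \overline{B}(z,r)$ with $r<D_\kappa/2$. Intersecting every $K_i$ with $K_{i_0}$ preserves the finite-intersection property and does not change the total intersection. So we may assume every $K_i$ lies in $\overline{B}(z,r)$, and that the family is closed under finite intersections. Each set is then a nonempty closed convex subset of a ball of radius $<D_\kappa/2$. In a complete $\CAT(\kappa)$ space such a set admits a unique metric projection from any nearby point; for $\kappa=0$ this is \eqref{eq:CAT0}, and for $\kappa>0$ it is Lemma \ref{lem:Ohta}. The tool is the uniform $k$-convexity of $d^2(z,\cdot)$ on $\overline{B}(z,r)$ with $k>0$: if $x_0,x_1\in K$ are both almost minimizers of $d(z,\cdot)$ on $K$, the midpoint inequality forces $d(x_0,x_1)$ to be small.

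Set $\rho_i:=d(z,K_i)$ and $\rho:=\sup_i \rho_i\le r$, the sup being finite since all sets lie in $\overline{B}(z,r)$. Choose $i_n$ with $\rho_{i_n}\to\rho$. Replace $K_{i_n}$ by $L_n:=K_{i_1}\cap\dots\cap K_{i_n}$, which still belongs to the family and satisfies $d(z,L_n)\to\rho$. Let $x_n$ be the projection of $z$ onto $L_n$.

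\textbf{Step 1: $(x_n)$ is Cauchy.} For $m\ge n$ we have $x_m\in L_m\subset L_n$, so the midpoint $y$ of $x_n,x_m$ lies in $L_n$. Applying the $k$-convexity inequality at $t=1/2$ gives
\[
\rho_{L_n}^2\le d^2(z,y)\le \tfrac12 d^2(z,x_n)+\tfrac12 d^2(z,x_m)-\tfrac{k}{8}d^2(x_n,x_m),
\]
where $\rho_{L_n}=d(z,L_n)$. Since $d(z,x_n)=\rho_{L_n}$, $d(z,x_m)=\rho_{L_m}$, and both tend to $\rho$, it follows that $d(x_n,x_m)\to0$. By completeness, $x_n\to x$.

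\textbf{Step 2: $x\in K_i$ for every $i$.} Fix $i$ and put $M_n:=L_n\cap K_i$, a nonempty member of the family. Let $w_n$ be the projection of $z$ onto $M_n$. Then
\[
d(z,L_n)\le d(z,w_n)=d(z,M_n)\le \rho .
\]
Since $w_n\in L_n$, the same midpoint argument applied to $x_n$ and $w_n$ inside $L_n$ yields $d(x_n,w_n)\to0$. Hence $w_n\to x$, and because $w_n\in K_i$ and $K_i$ is closed, $x\in K_i$. This shows $x\in\bigcap_i K_i$.

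\textbf{Main obstacle.} The hard part is the $\kappa>0$ case, where one must check that Lemma \ref{lem:Ohta} actually applies. Two things are needed. First, the relevant points, including the geodesic midpoints, must stay in $\overline{B}(z,r)$; they do, because everything lies in the convex set $K_{i_0}$. Second, $r<D_\kappa/2$ must guarantee both the uniqueness of geodesics and $k>0$; it does, since $\sqrt{\kappa}\,r<\pi/2$. The existence of the projections $x_n,w_n$ can be avoided altogether by using near-minimizers, since the same midpoint estimate handles them.
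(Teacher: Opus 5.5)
Your argument is correct: restricting everything to $K_{i_0}\subset\overline{B}(z,r)$, closing the family under finite intersections, and showing via the uniform $k$-convexity of $d^2(z,\cdot)$ (Lemma \ref{lem:Ohta}, with $k>0$ since $\sqrt{\kappa}\,r<\pi/2$) that the projections of $z$ onto $L_n$, and onto $L_n\cap K_i$, converge to a common limit all hold up. The paper does not prove this lemma itself but cites \cite[Proposition 2.1.16]{Bacak} and \cite[Lemma 5.1]{CL10}, and your projection argument is essentially the standard proof behind those references.
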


For nonempty bounded closed sets $A,B\subset E$, let
\[
d_H(A,B) := \max\left\{ \sup_{x\in A} d(x,B),\; \sup_{y\in B} d(y,A) \right\}
\]
denote the Hausdorff distance between $A$ and $B$.
The proof of the continuity of $c$ is based on the Lipschitz stability of the set of convex means with respect to the 1-Wasserstein distance.

\begin{prop}\label{prop:stability}
Let $(E,d)$ be a complete separable $\CAT(\kappa)$ space with $\kappa \geq 0$.
Let $p,q \in \mathcal{P}_1(E)$ be two probability measures supported in a convex closed subset $D \subset E$.
When $\kappa >0$, suppose that $D$ is included in a closed ball $B$ of radius $\varepsilon <D_\kappa/2$.
Then the following hold.
\begin{enumerate}
\item For any coupling $\pi \in \Pi(p,q)$, we have
\[ C(p) = \operatorname{pr}_1\bigl(C(\pi)\bigr), \qquad C(q) = \operatorname{pr}_2\bigl(C(\pi)\bigr),\]
where $\operatorname{pr}_1,\operatorname{pr}_2$ are the coordinate projections from $E \times E$ to $E$.
\item As a consequence, we have
\[
\begin{array}{ll}
d_H\bigl(C(p), C(q)\bigr) \leq W_1(p,q) \quad &\text{if } \kappa =0, \\[3pt]
d_H\bigl(C(p), C(q)\bigr) \leq  C W_1^{\alpha}(p,q) \quad &\text{if } \kappa > 0,
\end{array}
\]
with constants $C>0$ and $\alpha \in (0,1)$ depending on $\kappa$ and $\ep$.
\end{enumerate}
\end{prop}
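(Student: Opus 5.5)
Part (1) splits into two inclusions. For the easy one, $\operatorname{pr}_1(C(\pi)) \subset C(p)$, take $(x,y)\in C(\pi)$ and an admissible l.s.c.\ convex $f$ on $E$. Then $F(u,v):=f(u)$ is l.s.c.\ and convex on $E\times E$, since geodesics in the $\ell_2$ product are pairs of constant-speed geodesics. Hence $f(x)=F(x,y)\le \int F\,d\pi=\int f\,dp$, so $x\in C(p)$; the same argument gives the second coordinate. In the $\kappa>0$ case I would first note that $\pi$ is concentrated on $D\times D$, which lies in a small ball of the product. So by Lemma \ref{lem:tech}(2) it suffices to test with bounded uniformly continuous convex functions, and the same argument applies.

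The reverse inclusion $C(p)\subset \operatorname{pr}_1(C(\pi))$ is the core. Fix $x\in C(p)$; I want some $y$ with $(x,y)\in C(\pi)$. For each admissible convex $F$ on $E\times E$ (reduced via Lemma \ref{lem:tech} to Lipschitz, or bounded uniformly continuous, convex functions), set
\[K_F:=\Bigl\{y\in D: F(x,y)\le \int F\,d\pi\Bigr\}.\]
Each $K_F$ is closed and convex, because $y\mapsto F(x,y)$ is convex (restrict $F$ to the geodesics $t\mapsto(x,y_t)$). Moreover $K_F\subset D$, which is bounded (a small ball if $\kappa>0$), or we may intersect with $C(q)$, which is bounded. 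If every finite subfamily has nonempty intersection, Lemma \ref{lem:fip} gives a $y$ lying in all $K_F$, i.e.\ $(x,y)\in C(\pi)$.

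For the finite intersection property, note that $\bigcap_{i\le n}K_{F_i}\neq\varnothing$ requires a $y$ with $F_i(x,y)\le \int F_i\,d\pi$ for all $i$ simultaneously. Here I would use a minimax/convex-combination argument. Consider $G_\lambda=\sum\lambda_i(F_i-\int F_i\,d\pi)$ for $\lambda$ in the simplex, and the function $g_\lambda(u):=\inf_{v\in D}G_\lambda(u,v)$. The partial infimum of a jointly convex function along the fibre is convex in $u$ (this is the same computation as for $f_n$ in Lemma \ref{lem:tech}), and it is l.s.c.\ or continuous by the regularity and compactness-type reduction. Since $x\in C(p)$,
\[g_\lambda(x)\le \int g_\lambda\,dp\le \int G_\lambda\,d\pi=0.\]
Thus $\inf_y \max$ is handled via $\inf_y\sum\lambda_iF_i(x,y)\le\sum\lambda_i\int F_i$ for every $\lambda$. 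A minimax theorem (Sion-type on the convex set $D$, which would need compactness, or a direct separation argument in $\R^n$ applied to the convex set $\{(F_i(x,y)-\int F_i)_i + \R_+^n : y\in D\}$) then yields a single $y$. The image of a geodesically convex set under convex maps gives a convex set up to adding $\R_+^n$, and if $0$ is not in its closure a separating $\lambda\ge0$ contradicts the bound above. Closedness and attainment would come from Lemma \ref{lem:fip} applied to sublevel sets $\{y:\max_i(F_i(x,y)-\int F_i)\le\delta\}$ with $\delta\downarrow0$. I expect this step, together with the measurability and semicontinuity of $g_\lambda$, to be the main obstacle.

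For part (2), with $\kappa=0$, take an optimal $W_1$ coupling $\pi$ and $x\in C(p)$, and pick $y$ with $(x,y)\in C(\pi)$. The function $d$ is jointly convex on $E\times E$ (Busemann convexity), so $d(x,y)\le\int d\,d\pi=W_1(p,q)$. Since $y\in C(q)$, we get $d(x,C(q))\le W_1$, and by symmetry the Hausdorff bound follows. For $\kappa>0$ I would instead use Yokota's jointly convex $\Phi$ from \eqref{eq:ineq-Yokota}:
\[c_0d^m(x,y)\le\Phi(x,y)\le\int\Phi\,d\pi\le C_0\int d^m\,d\pi\le C_0(2\ep)^{m-1}W_1(p,q).\]
This gives $C W_1^{1/m}$ with $\alpha=1/m$.
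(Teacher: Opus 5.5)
Your proposal is correct and is essentially the paper's own proof. It uses the same sets $K_F$, the same separation in $\R^{n}$ refuted by the convex partial infimum $g_\lambda(u)=\inf_{v\in D}G_\lambda(u,v)$ together with $x\in C(p)$, Lemma \ref{lem:fip} to pass from finite to full intersections, and an identical part (2). The only real difference is that you get attainment from the nested sublevel sets $\{y:\max_i(F_i(x,y)-\int F_i\,d\pi)\le\delta\}$, while the paper proves directly that the upper set $L$ is closed using closed convex hulls of tails.

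One point needs tightening. When $\kappa=0$ and $D=E$ is unbounded, ``intersecting with $C(q)$'' is not what makes Lemma \ref{lem:fip} applicable. Instead, add $F_0(u,v)=d(o,v)$ to every finite family, as the paper does with $\Phi_0$; then $K_{F_0}$ and all your sublevel sets are bounded.
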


Note that (2) above extends in particular the Wasserstein contraction property of barycenters \eqref{eq:W1contraction} to the set-valued context of convex means.

\proof[Proof of Proposition \ref{prop:stability}.]

\noindent\emph{Proof of item 1.}
By symmetry, it is enough to prove the statement for the first marginal, namely $C(p)=\operatorname{pr}_1(C(\pi))$.
The inclusion $\operatorname{pr}_1(C(\pi))\subset C(p)$ is immediate: if $(x,y)\in C(\pi)$ and $f:E \to \R \cup \{+\infty\}$ is a l.s.c.\ convex function, then $f\circ\operatorname{pr}_1:E\times E \to \R \cup \{+\infty\}$ is also l.s.c.\ convex and thus
\[
f(x)\leq \int_{E\times E} f(u)\,\pi(dudv)=\int_E f\,dp,
\]
provided that the integral makes sense.

Let us prove the converse inclusion.
In the $\kappa=0$ case, we set $D=B=E$ in order to unify the reasoning.
Fix $x_0 \in C(p)$ and an arbitrary point $o\in E$.
Let $\Phi_0$ be the function defined by $\Phi_0(x,y) := d(o,y)$ in the case $\kappa=0$, and $\Phi_0:=0$ if $\kappa>0$.
Let $\Phi_1,\ldots,\Phi_m$ be convex, uniformly continuous, $\pi$-integrable functions bounded from below on $B\times B$.
Put
\[
c_i:=\int_{B\times B}\Phi_i\,d\pi,\qquad 0\leq i\leq m.
\]
Note that $c_0<\infty$ since $q \in \mathcal{P}_1(E)$.
Define
\[
L:=\Bigl\{r\in\R^{m+1}: \exists\, y\in D\text{ such that }\Phi_i(x_0,y)\leq r_i,\, 0\leq i\leq m\Bigr\}.
\]
Then $L$ is convex and satisfies $L+\mathbb{R}^{m+1}_+=L$.
Moreover, $L$ is closed.
Indeed, let $r^n\in L$ be such that $r^n\to r$, and $y_n$ witnesses $r^n$.
Let us show that $r\in L$.
First, $(y_n)$ is bounded.
This is obvious in the $\kappa>0$ case, since $D$ is bounded.
In the $\kappa=0$ case, it holds $d(o,y_n)=\Phi_0(x_0,y_n)\leq r_0^n$, which implies the boundedness of $(y_n)$.
By the finite-intersection property (Lemma \ref{lem:fip}) applied to the decreasing closed convex hulls $\overline{\operatorname{co}}\{y_n:n\geq k\}$, there exists $y$ in their intersection.
For each $k$, $y\in \overline{\operatorname{co}}\{y_n:n\geq k\}$, thus by the convexity and continuity of each $\Phi_i(x_0,\,\cdot\,)$,
\[
\Phi_i(x_0,y)\leq \sup_{n\geq k}\Phi_i(x_0,y_n).
\]
Hence, by letting $k\to \infty$,
\[
\Phi_i(x_0,y)\leq \limsup_{n\to \infty}\Phi_i(x_0,y_n) \leq r_i,
\]
and so $r\in L$.

We claim that $c=(c_0,\dots,c_m)\in L$.
If not, since $L$ is a closed convex upper set, the hyperplane separation theorem between $L$ and $\{c\}$ gives some nonzero $a\in\mathbb{R}^{m+1}_+$ such that
\[
\inf_{y\in D}\sum_{i=0}^ma_i\Phi_i(x_0,y) > \sum_{i=0}^m a_i c_i.
\]
Set $g(x):=\inf_{y\in D}\sum_{i=0}^m a_i\Phi_i(x,y)$, $x\in B$, which is clearly bounded from below.
As the infimum in one variable of a jointly convex uniformly continuous function, the function $g$ is convex and uniformly continuous on $
B$.
Since $x_0 \in C(p)$,
\[
g(x_0)\leq \int_B g\,dp = \int_{B\times B}g(x)\,\pi(dxdy) \leq \sum_{i=0}^m a_i c_i,
\]
contradicting the above separation inequality.
Thus $c\in L$.

Therefore, for every finite family $\Phi_1,\ldots,\Phi_m$, there exists $y\in D$ such that
\[
\Phi_i(x_0,y)\leq \int_{B\times B}\Phi_i\,d\pi,\qquad 0\leq i\leq m.
\]
Equivalently, the closed convex sets
\[
K_\Phi:=\Bigl\{y\in D:\Phi(x_0,y)\leq \int_{B\times B}\Phi\,d\pi\Bigr\},
\]
indexed by convex, uniformly continuous, $\pi$-integrable and lower bounded functions $\Phi$ on $B\times B$, have the finite-intersection property, and $K_{\Phi_0}$ is bounded.
By Lemma \ref{lem:fip}, there exists $y_0\in\bigcap_\Phi K_\Phi$, which, according to Lemma \ref{lem:tech}, exactly means $(x_0,y_0)\in C(\pi)$.
Thus $C(p)\subset \operatorname{pr}_1(C(\pi))$.

\noindent\emph{Proof of item 2; case $\kappa=0$.}
Let $\pi\in\Pi(p,q)$ be an optimal $W_1$-coupling, and fix $x \in C(p)$.
By item 1, there exists $y \in E$ such that $(x,y)\in C(\pi)$.
Applying the defining inequality for $C(\pi)$ to the convex function $\Phi(u,v):=d(u,v)$ gives
\[
d(x,y)\leq \int_{E\times E} d(u,v)\,\pi(dudv) =W_1(p,q).
\]
Since $y \in C(q)$ again by item 1, we get $d(x,C(q))\leq W_1(p,q)$.
Taking the supremum over $x \in C(p)$, and arguing symmetrically on the second coordinate, we obtain
\[
d_H\bigl(C(p),C(q)\bigr) \leq W_1(p,q).
\]

\noindent\emph{Proof of item 2; case $\kappa>0$.}
Let $\Phi:B\times B \to \R_+$ be the uniformly continuous jointly convex function used in the proof of Lemma \ref{lem:tech}.
Reasoning as in the $\kappa=0$ case yields
\[
\max\left(\sup_{x\in C(p)}\inf_{y\in C(q)}\Phi(x,y),\, \sup_{y\in C(q)}\inf_{x\in C(p)}\Phi(x,y)\right) \leq \inf_{\pi \in \Pi(p,q)}\int_{D\times D} \Phi(u,v)\,\pi(dudv).
\]
Then, using \eqref{eq:ineq-Yokota} and the boundedness of $D$, we conclude that
\[
d_H\bigl(C(p),C(q)\bigr) \leq  CW_1^{\alpha}(p,q)
\]
for $C=(C_0/c_0)^{1/m} \operatorname{diam}(D)^{(m-1)/m}$ and $\alpha=1/m$ (for $m>1$ in \eqref{eq:ineq-Yokota}).
\endproof

\proof[Proof of Proposition \ref{lem:lsc}]
We give the proof in the $\kappa>0$ case, the other case is simpler.
By the triangle inequality, for all $x,y\in \widetilde{D}$ and $p,q\in\mathcal{P}_2(D)$,
\[ \bigl|d\bigl(x,C(p)\bigr)-d\bigl(y,C(q)\bigr)\bigr| \leq d(x,y)+d_H\bigl(C(p),C(q)\bigr).
\]
Together with the H\"older estimate of Proposition \ref{prop:stability}, it shows that the cost function $c$ is continuous for the product topology with the $1$-Wasserstein topology and hence with the $2$-Wasserstein topology.
Now, let us prove the convexity with respect to the second variable.
Fix $x \in \widetilde{D}$.
By Lemma \ref{lem:Ohta}, the function $d^2(x,\,\cdot\,)$ is convex on the closed ball of center $x$ and radius $\varepsilon$, and thus on $D$ (by the definition of $\widetilde{D}$).
Let
$p_0,p_1\in\mathcal P_2(D)$ and put $p_t:=(1-t)p_0+tp_1 \in \mathcal{P}_2(D)$, $t\in [0,1]$.
According to Lemma \ref{rem:convex_mean_geodesic}, if
$(y_t)_{t\in [0,1]}$ is a geodesic connecting $y_0 \in C(p_0)$ to $y_1 \in C(p_1)$, then $y_t$ belongs to $C(p_t)$.
Hence
\[
 c(x,p_t)\leq  d^2(x,y_t)\leq (1-t)d^2(x,y_0)+td^2(x,y_1).
\]
Taking the infimum over $y_0,y_1$ then yields
\[
 c(x,p_t) \leq  (1-t)c(x,p_0)+t c(x,p_1),
\]
which completes the proof.
\endproof

\subsection{Convex and lower semicontinuous envelopes}\label{sec:envelope}

Before turning to the proof of Theorem \ref{thm:main-WOT}, we need to introduce the notions of convex and l.s.c.\ envelopes of a function.
In this subsection, $D$ is a convex subset of a geodesic space $E$.

Let $f:D \to \R \cup\{+\infty\}$ be a measurable function satisfying the lower bound $f \geq -a(1 + d^2(x_0,\,\cdot\,))$ for some $a\geq0$ and $x_0\in E$.
The \emph{convex envelope} of $f$ is the function $\overline{f}:D\to\R \cup\{\pm \infty\}$ defined by
\[
\overline f(x) := \inf\biggl\{ \int_D f\,dp \;:\; p \in \mathcal{P}_2(D),\ x\in C(p) \biggr\}.
\]
Since $x\in C(\delta_x)$, the infimum is taken over a nonempty set, and $\overline f(x)\leq f(x)$ for every $x\in D$.

\begin{lem}\label{lem:cvx-envelope}
Let $f:D \to \R \cup \{+\infty\}$ be such that $\overline f$ does not take the value $- \infty$.
Then the convex envelope $\overline f$ is convex.
Moreover, if $f$ is l.s.c.\ and convex, then $\overline{f}=f$.
\end{lem}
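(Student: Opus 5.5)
To prove that $\overline f$ is convex, I will use the stability of convex means along geodesics (Lemma~\ref{rem:convex_mean_geodesic}). Fix a geodesic $(x_t)_{t\in[0,1]}$ in $D$ and $t\in(0,1)$. If $\overline f(x_0)=+\infty$ or $\overline f(x_1)=+\infty$, the inequality $\overline f(x_t)\le(1-t)\overline f(x_0)+t\overline f(x_1)$ is trivial. Otherwise, since $\overline f$ never equals $-\infty$, both values are finite. For $\delta>0$, I pick $p_0,p_1\in\mathcal P_2(D)$ with $x_i\in C(p_i)$ and $\int_D f\,dp_i\le \overline f(x_i)+\delta$. The mixture $p_t:=(1-t)p_0+tp_1$ lies in $\mathcal P_2(D)$, and Lemma~\ref{rem:convex_mean_geodesic} gives $x_t\in C(p_t)$. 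Hence
\[
\overline f(x_t)\le \int_D f\,dp_t=(1-t)\int_D f\,dp_0+t\int_D f\,dp_1\le (1-t)\overline f(x_0)+t\overline f(x_1)+\delta .
\]
Letting $\delta\to0$ gives convexity. Two routine points need checking along the way. First, the integrals $\int f\,dp$ are well defined in $\R\cup\{+\infty\}$, because the lower bound $f\ge -a(1+d^2(x_0,\cdot))$ and $p\in\mathcal P_2$ make $[f]_-$ integrable. Second, linearity of the integral under mixtures holds in this extended sense.

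For the second claim, assume $f$ is l.s.c.\ and convex. The inequality $\overline f\le f$ is already known, since $x\in C(\delta_x)$. For the reverse, take $p\in\mathcal P_2(D)$ with $x\in C(p)$. Extend $f$ by $+\infty$ outside $D$; because $D$ is closed, the extension is still l.s.c.\ and convex on $E$. Since $p\in\mathcal P_2$, the lower bound gives $\int [f]_-\,dp<\infty$, so $f$ is an admissible test function in the definition of convex mean. Therefore $f(x)\le\int_D f\,dp$, and taking the infimum over all such $p$ yields $f\le\overline f$.

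The main obstacle is the admissibility of $f$ as a test function in the second claim. The definition of $C(p)$ uses l.s.c.\ convex functions on $E$, so the extension by $+\infty$ is only l.s.c.\ if $D$ is closed. If $D$ is merely convex, I would first restrict to closed convex $D$, which is the setting of Theorem~\ref{thm:main-WOT}, or else argue through the closure $\overline D$. In the $\kappa>0$ case, I would additionally rely on the standing localization: $D$ lies in a small ball, so convex means make sense and Lemma~\ref{lem:tech}(2) allows the reduction to the relevant class of test functions.
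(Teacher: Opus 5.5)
Your proof is correct and follows the paper's argument. Convexity comes from Lemma~\ref{rem:convex_mean_geodesic} applied to the mixture $(1-t)p_0+tp_1$, followed by an infimum over admissible $p_0,p_1$; your $\delta$-approximation is just an explicit form of that step. The identity $\overline f=f$ for l.s.c.\ convex $f$ then follows directly from the definition of convex mean. Your remarks on the integrability of $[f]_-$, and on needing $D$ closed for the $+\infty$-extension to be l.s.c., fill in details the paper leaves implicit; the paper only applies the lemma with $D$ closed.
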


\proof
Let $x_0,x_1\in D$ and $(x_t)_{t\in[0,1]}$ be a geodesic in $D$, and fix $p_0,p_1\in\mathcal{P}_2(D)$ such that $x_i\in C(p_i)$ for $i=0,1$.
By Lemma \ref{rem:convex_mean_geodesic}, $x_t\in C((1-t)p_0+tp_1)$, thereby
\[
\overline f(x_t)
\leq \int_D f\,d\bigl((1-t)p_0+tp_1\bigr)
=(1-t)\int_D f\,dp_0 +t\int_D f\,dp_1.
\]
Taking the infimum over all admissible $p_0,p_1$ gives
\[
\overline f(x_t)\leq (1-t)\overline f(x_0)+t\overline f(x_1).
\]

Now assume that $f:D \to \R \cup \{+\infty\}$ is l.s.c.\ and convex.
We already know that $\overline f \leq f$.
To see the converse, let $x \in D$.
Since $f$ is l.s.c.\ and convex, by the definition of convex mean, for all $p$ with $x\in C(p)$ it holds
\[
f(x)\leq \int_D f\,dp.
\]
Taking the infimum over $p$ in the right hand side yields $f(x)\leq \overline f(x)$.
\endproof

Next, given a function $f:D\to \R\cup\{+\infty\}$, its \emph{lower semicontinuous envelope} $\widetilde{f}$ is defined as the greatest l.s.c.\ function below $f$.
It is in fact given by
\begin{equation}\label{eq:lsc}
\widetilde{f}(x) = \liminf_{y\to x} f(y) = \sup_{r>0} \inf_{y \in B(x,r)\cap D} f(y),\qquad \forall x\in D.
\end{equation}
We will use the following result showing that the l.s.c.\ envelope of a convex function is still convex.

\begin{lem}\label{lem:lsc-envelope}
Let $(E,d)$ be a complete $\CAT(\kappa)$ space with $\kappa \geq 0$ and $D \subset E$ be a convex closed subset.
When $\kappa >0$, suppose that $D$ is included in a ball of radius $\ep <D_\kappa/2$.
If $f:D \to \R\cup\{+\infty\}$ is a convex function such that $\widetilde{f}$ does not take the value $-\infty$, then $\widetilde{f} : D\to \R \cup\{+\infty\}$ is convex as well.
\end{lem}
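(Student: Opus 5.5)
The approach is to compare values along a geodesic with values along nearby geodesics. Fix $x_0,x_1\in D$, the unique geodesic $(x_t)_{t\in[0,1]}$ joining them (contained in $D$ since $D$ is convex), and $t\in(0,1)$. We may assume $\widetilde f(x_0),\widetilde f(x_1)<+\infty$. By \eqref{eq:lsc} there are sequences $x_0^n\to x_0$ and $x_1^n\to x_1$ in $D$ with $f(x_i^n)\to \widetilde f(x_i)$ for $i=0,1$. Indeed, since $\widetilde f(x_i)=\liminf_{y\to x_i}f(y)$, we can take $x_i^n\in B(x_i,1/n)\cap D$ with $f(x_i^n)\le \inf_{B(x_i,1/n)\cap D} f+1/n$. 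The liminf is either attained along such points or equals $f(x_i)$ in the limit, and in either case $f(x_i^n)\to\widetilde f(x_i)$.

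Let $(x_s^n)_{s\in[0,1]}$ be the geodesic from $x_0^n$ to $x_1^n$; it lies in $D$ by convexity. Convexity of $f$ gives
\[
f(x_t^n)\le (1-t)f(x_0^n)+t f(x_1^n).
\]
The key geometric input is that $x_t^n\to x_t$. For $\kappa=0$ this follows from the joint convexity of the distance already used in the proof of Lemma \ref{lem:tech}:
\[
d(x_t^n,x_t)\le (1-t)d(x_0^n,x_0)+t\,d(x_1^n,x_1).
\]
For $\kappa>0$, all points lie in a ball of radius $\ep<D_\kappa/2$, so geodesics are unique and depend continuously on their endpoints, by the CAT($\kappa$) comparison for triangles of perimeter $<2D_\kappa$ \cite{BH99}. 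Concretely, one compares the triangles $\Delta(x_0,x_1,x_1^n)$ and $\Delta(x_0,x_1^n,x_0^n)$ with their spherical comparison triangles and uses continuity of spherical geodesics in their endpoints away from antipodal pairs.

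Once $x_t^n\to x_t$ is established, lower semicontinuity of $\widetilde f$ together with $\widetilde f\le f$ gives
\[
\widetilde f(x_t)\le \liminf_n \widetilde f(x_t^n)\le \liminf_n f(x_t^n)\le (1-t)\widetilde f(x_0)+t\widetilde f(x_1),
\]
which is the convexity inequality. The hypothesis that $\widetilde f>-\infty$ is used only to keep the right-hand side well defined and to avoid the forms $\infty-\infty$. I expect the main obstacle to be the continuity of geodesics in their endpoints in the $\kappa>0$ case. If a direct comparison argument becomes messy, I would instead cite the standard fact \cite[Proposition II.1.4]{BH99} that in a CAT($\kappa$) space, geodesics between points at distance $<D_\kappa$ vary continuously with their endpoints.
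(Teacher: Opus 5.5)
Your proof is correct and follows the paper's argument essentially verbatim. You approximate the endpoints by sequences along which $f$ converges to $\widetilde f$, apply the convexity of $f$ on the nearby geodesics, and pass to the limit using \eqref{eq:lsc} together with the continuity of geodesics in their endpoints \cite[Proposition II.1.4(1)]{BH99}; the only cosmetic difference is that for $\kappa=0$ you get $x_t^n\to x_t$ directly from the Busemann convexity of $d$ instead of from that citation.
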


\proof
Fix $x_0,x_1\in D$ and denote by $\gamma$ the unique constant speed geodesic connecting $x_0$ to $x_1.$
Fix $t\in (0,1)$ and let us prove that
\begin{equation}\label{eq:tildefcvx}
\widetilde{f}\bigl(\gamma(t)\bigr)\leq (1-t) \widetilde{f}(x_0) + t \widetilde{f}(x_1).
\end{equation}
If $\widetilde f(x_0)=+\infty$ or $\widetilde f(x_1)=+\infty$, there is nothing to prove.
So let us assume that both endpoint values are finite.
By the sequential characterization \eqref{eq:lsc} of the l.s.c.\ envelope, for each $i\in\{0,1\}$ there is a sequence $(x_i^n)_n$ in $D$ such that
\[
x_i^n\longrightarrow x_i
\qquad\text{and}\qquad
f(x_i^n)\longrightarrow \widetilde f(x_i).
\]
Let $\gamma^n:[0,1] \to D$ be the geodesic connecting $x_0^n$ to $x_1^n$.
According to \cite[Proposition II.1.4(1)]{BH99}, geodesic segments in a $\CAT(\kappa)$ space joining endpoints at distance less than $D_\kappa$ are unique and depend (uniform) continuously on their endpoints.
Hence $\gamma^n(t) \to \gamma(t)$ and, by \eqref{eq:lsc},
\[
\widetilde f\bigl(\gamma(t)\bigr)\leq \liminf_{n\to\infty} f\bigl(\gamma^n(t)\bigr).
\]
On the other hand, the convexity of $f$ gives
\[
f\bigl(\gamma^n(t)\bigr)
\leq (1-t)f(x_0^n)+t f(x_1^n).
\]
Taking the limits as $n \to \infty$ and using the convergence of the endpoint values, we obtain \eqref{eq:tildefcvx}.
\endproof

We will also need the following lemma showing that the infimum convolution operator $Q$ (recall Theorem \ref{thm:main-WOT}) does not see the difference between a function and its l.s.c.\ envelope.

\begin{lem}\label{lem:infconv-lsc}
For any $f:D\to \R\cup\{+\infty\}$, it holds $Qf=Q\widetilde{f}$.
\end{lem}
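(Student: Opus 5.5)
We prove the two inequalities $Q\widetilde f\leq Qf$ and $Qf\leq Q\widetilde f$ separately, working pointwise at a fixed $x\in E$. Recall $Qf(x)=\inf_{y\in D}\{f(y)+d^2(x,y)\}$, and the same formula with $\widetilde f$ in place of $f$.

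The first inequality is immediate. By definition $\widetilde f\leq f$ on $D$, hence $\widetilde f(y)+d^2(x,y)\leq f(y)+d^2(x,y)$ for every $y\in D$. Taking the infimum over $y\in D$ gives $Q\widetilde f(x)\leq Qf(x)$.

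For the reverse inequality, fix $y\in D$. It suffices to show
\[
Qf(x)\leq \widetilde f(y)+d^2(x,y).
\]
If $\widetilde f(y)=+\infty$ there is nothing to prove. Otherwise, by the sequential characterization \eqref{eq:lsc}, we choose $y_n\in D$ with $y_n\to y$ and $f(y_n)\to\widetilde f(y)$. The value $\widetilde f(y)=-\infty$ is allowed here, with convergence to $-\infty$ understood in the obvious sense. For each $n$,
\[
Qf(x)\leq f(y_n)+d^2(x,y_n).
\]
Since $d^2(x,\,\cdot\,)$ is continuous, $d^2(x,y_n)\to d^2(x,y)$. Letting $n\to\infty$ gives $Qf(x)\leq \widetilde f(y)+d^2(x,y)$. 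Taking the infimum over $y\in D$ yields $Qf(x)\leq Q\widetilde f(x)$.

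The only point requiring care is the existence of the approximating sequence $(y_n)$ within $D$. This follows from \eqref{eq:lsc}, since the liminf there is taken over points of $D$: for each $r=1/n$, pick $y_n\in B(y,1/n)\cap D$ whose value $f(y_n)$ is within $1/n$ of $\inf_{B(y,1/n)\cap D}f$, or below $-n$ if that infimum is $-\infty$. These infima increase to $\widetilde f(y)$ as $n\to\infty$, so $f(y_n)\to\widetilde f(y)$. No convexity or curvature assumption is needed.
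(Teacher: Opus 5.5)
Your proof is correct and follows the paper's argument: $\widetilde f\le f$ gives $Q\widetilde f\le Qf$, and passing to the limit in $Qf(x)\le f(z)+d^2(x,z)$ as $z\to y$ gives the reverse inequality. The only difference is that you realize the paper's ``$\liminf$ as $z\to y$'' through an explicit approximating sequence in $D$. Your handling of the values $\pm\infty$ is also fine.
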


\proof
By construction $\widetilde{f} \leq f$, therefore
\[
Q\widetilde{f}(x) = \inf_{y\in D}\{\widetilde{f}(y)+d^2(x,y)\} \leq \inf_{y\in D}\{f(y)+d^2(x,y)\} = Qf(x).
\]
On the other hand, for all $x \in E$, it holds
\[
Qf(x) \leq f(z)+d^2(x,z)
\]
for all $z\in D$.
Therefore, taking the $\liminf$ as $z \to y$ gives
\[
Qf(x) \leq \widetilde{f}(y)+d^2(x,y).
\]
Taking the infimum in $y\in D$ yields $Qf(x)\leq Q\widetilde{f}(x)$ and completes the proof.
\endproof

\subsection{Proof of the main result}

\proof[Proof of Theorem \ref{thm:main-WOT}]
In the sequel, $D$ denotes the closed convex set satisfying \eqref{eq:kappa>0} and let $\ep=\ep_{\mu,\nu}$.
Set $\widetilde{D} := \{x \in E : \sup_{y\in D} d(x,y)\leq \ep\}$ with the convention $D=\widetilde{D}=E$ in the case $\kappa=0$.
Recall that, under \eqref{eq:kappa>0}, we have $\mu \in \mathcal{P}_2(\widetilde D)$ and $\nu \in \mathcal{P}_2(D)$.

By Proposition \ref{lem:lsc}, the cost function $c:\widetilde{D} \times \mathcal{P}_2(D) \to \R_+$ is continuous for the product topology and convex in its second variable.
In particular, the condition $(\mathrm{C})$ of \cite{Beiglbock-Fundamental} is satisfied.
Moreover, for all $x\in \widetilde{D}$ and $p \in \mathcal{P}_2(D)$, the triangle inequality and the convexity of $d^2(x_0,\,\cdot\,)$ yield
\[
c(x,p) \leq 2d^2(x,x_0) +2d^2\bigl(x_0,C(p)\bigr)
\leq 2d^2(x,x_0)+2 \int_D d^2(x_0,y)\,p(dy),
\]
for any $x_0 \in \widetilde{D}$.
Thus, the condition $(\mathrm{B})$ of \cite{Beiglbock-Fundamental} is also satisfied.
Therefore, the fundamental theorem of weak optimal transport \cite[Theorem 1.2]{Beiglbock-Fundamental} (see also \cite[Theorems 1.2 and 1.3]{BBP19}) ensures that
\begin{equation}\label{eq:duality-WOT}
\mathcal{T}_c(\mu,\nu) = \min_{p:\mu p = \nu} \left\{ \int_{\widetilde D} c(x,p_x) \,\mu(dx) \right\} \;=\; \max_{f} \left\{ \int_{\widetilde D} Q_c f\,d\mu - \int_D f\,d\nu \right\},
\end{equation}
where the maximum is taken over $\nu$-integrable functions $f:D\to\R \cup \{+\infty\}$, and
\[
Q_c f(x) := \inf_{\rho \in \mathcal{P}_2(D)} \biggl\{ \int_D f\,d\rho + c(x,\rho) \biggr\}.
\]
Note that $\int_{\widetilde D} Q_cf\,d\mu$ always makes sense in $\R \cup \{-\infty\}$, and that the maximum can be further restricted to those $f$ such that $\int_{\widetilde D} Q_cf\,d\mu >-\infty$.
Let $f$ be such a function.
Then, for all $y \in D$, taking $\rho=\delta_y$ in the definition of $Q_cf(x_0)$ implies
\[
f(y) \geq Q_cf(x_0)- d^2(x_0,y),
\]
where $x_0$ is any point in $\widetilde{D}$ such that $Q_cf(x_0) >-\infty$.
Let $\overline f$ be the convex envelope introduced in Section \ref{sec:envelope}.
Writing $c(x,\rho)$ as $\inf_{y\in C(\rho)} d^2(x,y)$ and exchanging the two infima, we get
\begin{align*}
Q_c f(x) &= \inf_{\rho \in \mathcal{P}_2(D)}\, \inf_{y\in C(\rho)} \biggl\{ \int_D f\,d\rho + d^2(x,y) \biggr\}
= \inf_{y\in D}\, \inf_{\rho:y\in C(\rho)} \biggl\{ \int_D f\,d\rho + d^2(x,y) \biggr\} \\
&= \inf_{y\in D} \bigl\{ \overline f(y) + d^2(x,y) \bigr\},
\end{align*}
that is, $Q_c f = Q\overline f$.
This shows also that $\overline f$ does not take the value $-\infty$, otherwise $Q_c f$ would be identically $-\infty$.
Denoting by $\widetilde{\overline f}$ the l.s.c.\ envelope of the convex envelope $\overline{f}$, we deduce from Lemma \ref{lem:infconv-lsc} that
\[
 Q_cf=Q\overline f=Q\widetilde{\overline f}.
\]
Consequently,
\[
 \widetilde{\overline f}(y)
 \ge Q_cf(x_0)-d^2(x_0,y),\qquad \forall y\in D,
\]
so the negative part of $\widetilde{\overline f}$ is finite valued and $\nu$-integrable.
Moreover, since $\widetilde{\overline f}\le \overline{f} \le f$, the positive part of $\widetilde{\overline f}$ is also $\nu$-integrable.
Therefore $\widetilde{\overline f}\in L^1(\nu)$ and one gets
\begin{equation}\label{eq:envelope-ineq}
 \int_{\widetilde D} Q_cf\,d\mu-\int_D f\,d\nu
 \le
 \int_{\widetilde D} Q\widetilde{\overline f}\,d\mu
 -\int_D \widetilde{\overline f}\,d\nu.
\end{equation}
Moreover, according to Lemma \ref {lem:lsc-envelope}, $\widetilde{\overline{f}}$ is convex and l.s.c.
Conversely, if $g:D \to \R\cup \{+\infty\}$ is a $\nu$-integrable, convex and l.s.c.\ function, then Lemma \ref{lem:cvx-envelope} implies $\overline{g}=g$, so that $\widetilde{\overline{g}} = g$.
Together with \eqref{eq:envelope-ineq}, it means that the maximum in \eqref{eq:duality-WOT} can be restricted to this class of functions:
\begin{equation}\label{eq:TcQ}
\mathcal{T}_c(\mu,\nu) \;=\; \max_{f} \left\{ \int_{\widetilde D} Qf\,d\mu - \int_D f\,d\nu \right\},
\end{equation}
where the maximum is taken over all $\nu$-integrable, convex and l.s.c.\ functions $f:D\to\R \cup \{+\infty\}$.

Let $f$ be a $\nu$-integrable, convex and l.s.c.\ function on $D$, and let $\eta \lc \nu$.
The convex order gives $\int_D f\,d\eta \leq \int_D f\,d\nu$.
Hence
\[
\int_{\widetilde D} Qf\,d\mu - \int_D f\,d\nu \;\leq\; \int_{\widetilde D} Qf\,d\mu - \int_D f\,d\eta \;\leq\; W_2^2(\mu,\eta),
\]
where the latter inequality follows from the definition of $Qf$.
Taking the supremum over $f$ on the left and the infimum over $\eta$ on the right, we obtain
\begin{equation}\label{eq:WOT-geq-proj}
    \mathcal{T}_c(\mu,\nu) \;\leq\; \inf_{\eta \lc \nu} W_2^2(\mu,\eta).
\end{equation}

Let $p$ be an optimal kernel for $\mathcal{T}_c(\mu,\nu)$. For $\mu$-almost every $x$, the set $C(p_x)$ is a nonempty closed convex subset of $D$, and the metric projection $T(x)$ of $x$ to $C(p_x)$ is well defined (see e.g.\ \cite[Proposition 2.6]{SturmNPC} for $\kappa=0$ and \cite[Lemma 2.8]{KuwaeJensen} for $\kappa>0$).
Using Proposition \ref{prop:stability} together with Lemma \ref{lem:joint-projection} in Appendix \ref{sec:measurablility}, it is easy to see that $T$ is measurable, details are left to the reader.
Set $\bar\mu := T_\#\mu$.
Then
\begin{equation*}
\mathcal{T}_c(\mu,\nu) = \int_{\widetilde D} d^2\bigl(x,T(x)\bigr)\,\mu(dx) \;\geq\; W_2^2(\mu,\bar\mu).
\end{equation*}
Moreover, for every convex l.s.c.\ function $g$, since $T(x)\in C(p_x)$,
\[
\int_D g\,d\bar\mu = \int_{\widetilde D} g\bigl(T(x)\bigr)\,\mu(dx) \leq \int_{\widetilde D} \int_D g(y)\,p_x(dy)\,\mu(dx) = \int_D g\,d\nu,
\]
so that $\bar\mu \lc \nu$, and therefore
\[
\mathcal{T}_c(\mu,\nu) \;\geq\; W_2^2(\mu,\bar\mu) \;\geq\; \inf_{\eta \lc \nu} W_2^2(\mu,\eta).
\]
The opposite inequality \eqref{eq:WOT-geq-proj} shows that equality holds and that $\bar \mu$ is a minimizer for the right hand side, thereby
\[ \mathcal{T}_c(\mu,\nu) \;=\; \min_{\eta \lc \nu} W_2^2(\mu,\eta).
\]
This yields the first assertion of Theorem \ref{thm:main-WOT}.
As for the optimality conditions, note that (1), (2) of Theorem \ref{thm:main-WOT} describe how we constructed $T$.
Now it remains to prove the uniqueness of $T$ and that $T$ is the proximal operator for $f$ as in (3).

\emph{Uniqueness of $\bar \mu$ and of the map $T$.}
Put $\eta_0:=\bar\mu$ and take $\eta_1 \lc \nu$ such that
\[
W_2^2(\mu,\eta_1) = \mathcal{T}_c(\mu,\nu).
\]
It follows from the convex order that $\eta_0(D)=\eta_1(D)=1$.
Let $(Z,X_0, X_1)$ be random variables constructed on the same probability space such that $(Z,X_0)$ and $(Z,X_1)$ are optimal couplings for $(\mu,\eta_0)$ and $(\mu,\eta_1)$, respectively (such random variables are easily obtained from any optimal couplings by a gluing argument, we omit details).
We use the generalized geodesic between $\eta_0$ and $\eta_1$ with base $\mu$: let $X_{1/2}:=[X_0,X_1]_{1/2}$ be the midpoint of $X_0$ and $X_1$, and let $\eta_{1/2}$ denote its law.
For any l.s.c.\ convex function $g:D\to\mathbb{R}$, convexity gives $g(X_{1/2}) \leq \frac12 g(X_0)+\frac12 g(X_1)$, so that
\[
\int_D g\,d\eta_{1/2} \leq \frac12 \int_D g\,d\eta_0 + \frac12 \int_D g\,d\eta_1 \leq \int_D g\,d\nu.
\]
Hence $\eta_{1/2}\lc \nu$, which ensures
\[
\mathbb{E}\, d^2(Z,X_{1/2}) \;\geq\; W_2^2(\mu,\eta_{1/2}) \;\geq\; \mathcal{T}_c(\mu,\nu).
\]
On the other hand, the function $d^2(z,\,\cdot\,)$ is $k$-convex on $D$ for every $z$ in the support of $\mu$, with $k=2$ if $\kappa=0$, and, by Lemma \ref{lem:Ohta} together with condition \eqref{eq:kappa>0}, with $k=2\sqrt{\kappa}\,\ep/\tan(\sqrt{\kappa}\,\ep)$ if $\kappa>0$.
Hence
\[
d^2(Z,X_{1/2}) \leq \frac12 d^2(Z,X_0) + \frac12 d^2(Z,X_1) - \frac{k}{8}\, d^2(X_0,X_1).
\]
Taking expectations and combining the two previous inequalities yields $\mathbb{E}\, d^2(X_0,X_1) \leq 0$, thus $X_0=X_1$ almost surely.
Therefore $\eta_1= \bar{\mu}$ and the optimal coupling between $\mu$ and the projection $\bar{\mu}$ is unique (since the optimal couplings between $\mu$ and $\eta_0,\eta_1$ could be chosen arbitrarily), which is the one given by $T$.

\emph{$T$ is the proximal operator associated with $f$.}
Let $f$ be an optimal convex function in \eqref{eq:TcQ}.
Since $\bar\mu \lc \nu$, we have $\int_D f\,d\bar\mu \leq \int_D f\,d\nu$, and, for all $x \in \widetilde{D}$, we have $Qf(x) \leq f(T(x)) + d^2(x,T(x))$ by the definition of $Qf$.
Therefore
\begin{align*}
\mathcal{T}_c(\mu,\nu) &= \int_{\widetilde D} Qf\,d\mu - \int_D f\,d\nu
\leq \int_{\widetilde D} Qf\,d\mu - \int_D f\,d\bar\mu
= \int_{\widetilde D} \bigl\{ Qf(x) - f\bigl(T(x)\bigr) \bigr\}\,\mu(dx) \\
&\leq \int_{\widetilde D} d^2\bigl(x,T(x)\bigr)\,\mu(dx) = \mathcal{T}_c(\mu,\nu),
\end{align*}
so that all these inequalities become equality.
In particular, for $\mu$-almost every $x$,
\[
Qf(x) = f\bigl(T(x)\bigr) + d^2\bigl(x,T(x)\bigr),
\]
that is, $T(x)$ attains the infimum defining $Qf(x)$.
Moreover, the function $y \mapsto f(y) + d^2(x,y)$ is the sum of a convex function and a $k$-convex function with $k>0$, thus it is $k$-convex and admits at most one minimizer.
Consequently, for $\mu$-almost every $x$, $T(x)$ is the unique minimizer of $y \mapsto f(y)+d^2(x,y)$, which is the desired proximal characterization as in (3).
\endproof

\subsection{Consequences of Theorem \ref{thm:main-WOT}}

The strong duality established in Theorem \ref{thm:main-WOT} --- the existence of an optimal convex function $f$ playing the role of a Lagrange multiplier --- has two useful consequences, recorded below:
a weak cyclical monotonicity property satisfied by optimal kernels, and a version of Strassen's theorem adapted to the convex order.

\begin{cor}\label{cor:cyclical}
In the setting of Theorem \ref{thm:main-WOT}, any kernel $p$ optimal for the problem
\[
\min_{p:\mu p = \nu} \int d^2\bigl(x, C(p_x)\bigr)\, \mu(dx)
\]
is $c$-monotone, with $c(x,p) = d^2(x, C(p))$.
In particular, there exists a subset $A \subset \operatorname{supp}\mu$ with $\mu(A)=1$ such that for all $x,y \in A$ and all $q_1, q_2 \in \mathcal{P}_2(D)$ satisfying
\[
q_1 + q_2 = p_x + p_y,
\]
it holds
\[
c(x,p_x) + c(y,p_y) \leq c(x,q_1) + c(y,q_2).
\]
\end{cor}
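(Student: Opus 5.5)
The plan is to derive $c$-monotonicity directly from the strong duality of Theorem \ref{thm:main-WOT}, using an optimal convex l.s.c.\ $\nu$-integrable dual function $f$. Let $p$ be an optimal kernel. The starting point is the pointwise inequality, valid for every $x\in\widetilde D$ and every $q\in\mathcal{P}_2(D)$ with $\int_D f\,dq$ well defined,
\[
Qf(x) \;\leq\; \int_D f\,dq + c(x,q).
\]
This follows from the identity $Q_cf = Q\overline f = Qf$ established in the proof of Theorem \ref{thm:main-WOT}, together with $\overline f = f$ for $f$ convex and l.s.c.\ (Lemma \ref{lem:cvx-envelope}). It can also be checked directly: for $y\in C(q)$, Jensen-type domination gives $f(y)\leq\int f\,dq$, and then $Qf(x)\leq f(y)+d^2(x,y)$; taking the infimum over $y\in C(q)$ yields the bound.

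Next, integrating against $\mu$ and using $\mu p=\nu$ together with optimality, we get
\[
\int Qf\,d\mu - \int f\,d\nu \;=\; \mathcal{T}_c(\mu,\nu) \;=\; \int c(x,p_x)\,\mu(dx).
\]
Since $\int f\,d\nu = \int\!\int f\,dp_x\,\mu(dx)$, the equality case forces
\[
Qf(x) = \int_D f\,dp_x + c(x,p_x)
\]
for $\mu$-a.e.\ $x$. Here we use that the integrand $\int f\,dp_x + c(x,p_x) - Qf(x)$ is nonnegative and has zero integral. Let $A$ be the set of $x\in\operatorname{supp}\mu$ where this equality holds, $\int|f|\,dp_x<\infty$, and $p_x\in\mathcal{P}_2(D)$. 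All of these conditions hold $\mu$-a.e., since $f\in L^1(\nu)$ and $\nu\in\mathcal{P}_2(D)$.

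Now fix $x,y\in A$ and $q_1+q_2=p_x+p_y$. Then $q_i\leq p_x+p_y$, so $f$ is $q_i$-integrable and $\int f\,dq_1+\int f\,dq_2=\int f\,dp_x+\int f\,dp_y$. Summing the two equalities at $x$ and $y$ and the two inequalities for $(x,q_1)$ and $(y,q_2)$ gives
\[
c(x,p_x)+c(y,p_y) = Qf(x)+Qf(y)-\int f\,d(p_x+p_y) \;\leq\; c(x,q_1)+c(y,q_2).
\]
This is the stated monotonicity. The same argument applied to finitely many points yields full $c$-monotonicity, in the sense of \cite{BBP19}, for any finite family $x_1,\dots,x_N\in A$ and any $q_i$ with $\sum q_i=\sum p_{x_i}$.

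The main point of care is the mass normalization. The $q_i$ in the statement are probability measures, so the relation $q_1+q_2=p_x+p_y$ between probability measures is consistent, and no rescaling of $f$ is needed. The remaining subtlety is ensuring that $\int f\,dq_i$ is finite, which the domination $q_i\leq p_x+p_y$ handles. The only nonformal input is the pointwise inequality $Qf\leq \int f\,dq+c(\cdot,q)$, which rests on $f(y)\le\int f\,dq$ for $y\in C(q)$. That inequality holds by definition of convex means, because $[f]_-$ is $q$-integrable.
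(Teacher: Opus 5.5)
Your argument is correct, but it takes a different route from the paper.

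The paper never touches the dual potential in this proof. It notes that assumptions (B) and (C) of \cite{Beiglbock-Fundamental} were already verified in the proof of Theorem~\ref{thm:main-WOT}. It then invokes \cite[Corollary 2.13]{Beiglbock-Fundamental} as a black box, which directly produces a $c$-monotone set $\Gamma\subset\widetilde D\times\mathcal{P}_2(D)$. The two-point inequality is then read off by taking $n=2$.

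You instead run a complementary-slackness argument that uses only what Theorem~\ref{thm:main-WOT} itself provides: attainment of the dual maximum by a convex, l.s.c., $\nu$-integrable $f$. You combine this with the elementary bound $Qf(x)\le\int f\,dq+c(x,q)$, which rests only on $f(y)\le\int f\,dq$ for $y\in C(q)$.

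Your route buys three things.
\begin{itemize}
\item The corollary becomes self-contained modulo the duality theorem.
\item The monotonicity set is explicit: it is the contact set $\{x: Qf(x)=\int f\,dp_x+c(x,p_x)\}$, intersected with the full-measure integrability conditions.
\item A single set $A$ serves for all finite families at once.
\end{itemize}
Your use of the domination $q_i\le p_x+p_y$ (or $q_i\le\sum_j p_{x_j}$) to get $f\in L^1(q_i)$ is the right way to handle integrability. With it you do not even need the quadratic lower bound $f\ge Qf(x_0)-d^2(x_0,\cdot)$ from the proof of the theorem.

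The paper's route is shorter and does not depend on the specific structure of the dual potential. It delivers $c$-monotonicity in exactly the abstract sense of \cite{Beiglbock-Fundamental}, at the cost of importing that external result.
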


\proof
The $c$-monotonicity follows from \cite[Corollary 2.13]{Beiglbock-Fundamental}.
Indeed, as shown in the proof of Theorem \ref{thm:main-WOT}, Assumptions $(\mathrm{B})$ and $(\mathrm{C})$ of \cite{Beiglbock-Fundamental} are granted.
Thus, for any optimal kernel $p$, the corresponding coupling $\pi(dxdy) = p_x(dy)\mu(dx)$ is $c$-monotone, in the sense that there exists $\Gamma \subset \widetilde{D} \times \mathcal{P}_2(D)$ such that $\mu(\{x : (x,p_x) \in \Gamma\})=1$ and, for every
$(x_1,\rho_1),\ldots,(x_n,\rho_n) \in \Gamma$
and all
$\widetilde{\rho}_1,\ldots,\widetilde{\rho}_n \in \mathcal{P}_2(D)$
with
\[
    \sum_{i=1}^{n} \rho_i
    =
    \sum_{i=1}^{n} \widetilde{\rho}_i,
\]
it holds
\[
    \sum_{i=1}^{n} c(x_i,\rho_i)
    \leq
    \sum_{i=1}^{n} c(x_i,\widetilde{\rho}_i).
\]
Letting $A:=\{x \in \operatorname{supp}\mu : (x,p_x) \in \Gamma\}$ and taking $(x_1,\rho_1)=(x,p_x)$ and $(x_2,\rho_2)=(y,p_y)$ for $x,y \in A$ shows the claim.
\endproof

The following result is a version of the well known Strassen's theorem \cite{Strassen} for the convex order.

\begin{thm}[Strassen's theorem for the convex order]\label{thm:strassen}
Let $(E,d)$ be a complete separable $\CAT(\kappa)$ space with $\kappa\geq 0$, and let $\mu,\nu\in\mathcal{P}_2(E)$.
Assume that \eqref{eq:kappa>0} holds in the case $\kappa>0$.
Then $\mu\lc \nu$ if and only if there exists a probability kernel $p=(p_x)_{x\in E}$ such that $\mu p=\nu$ and $x\in C(p_x)$ for $\mu$-almost every $x\in E$.
\end{thm}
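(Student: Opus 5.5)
The strategy is to read off both directions from Theorem~\ref{thm:main-WOT}, applied to the pair $(\mu,\nu)$.

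\emph{Sufficiency.} Suppose $p$ is a kernel with $\mu p=\nu$ and $x\in C(p_x)$ for $\mu$-a.e.\ $x$. Let $f$ be any l.s.c.\ convex function with $\int [f]_-\,d\mu<\infty$ and $\int [f]_-\,d\nu<\infty$. For $\mu$-a.e.\ $x$ the definition of convex mean gives $f(x)\le \int f\,dp_x$, provided $\int [f]_-\,dp_x<\infty$. Since $\int\!\int [f]_-\,dp_x\,\mu(dx)=\int [f]_-\,d\nu<\infty$, this integrability holds for $\mu$-a.e.\ $x$. Integrating in $\mu$ and using $\mu p=\nu$ then yields $\int f\,d\mu\le\int f\,d\nu$, so $\mu\lc\nu$. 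The only care needed is justifying the disintegration of the integral with values in $\R\cup\{+\infty\}$. This is routine via Tonelli applied separately to the positive and negative parts; alternatively, Lemma~\ref{lem:tech} lets us restrict to Lipschitz (respectively bounded uniformly continuous) convex $f$ bounded below. In the case $\kappa>0$ we additionally use that $\mu,\nu$ live in the relevant ball under \eqref{eq:kappa>0}.

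\emph{Necessity.} Suppose $\mu\lc\nu$. Then $\eta=\mu$ is admissible in the projection problem, so $\min_{\eta\lc\nu}W_2^2(\mu,\eta)=0$ and $\bar\mu=\mu$. By Theorem~\ref{thm:main-WOT} there exists an optimal kernel $p$ with $\mu p=\nu$ and
\[
\int d^2\bigl(x,C(p_x)\bigr)\,\mu(dx)=0 .
\]
Hence $d(x,C(p_x))=0$ for $\mu$-a.e.\ $x$. Because $C(p_x)$ is closed (and nonempty $\mu$-a.e., as established in the proof of that theorem), this gives $x\in C(p_x)$ for $\mu$-a.e.\ $x$. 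Equivalently, the map $T$ of item (2) is the identity $\mu$-a.e., which also follows from the uniqueness in item (1).

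The main point requiring attention is that, when $\kappa>0$, Theorem~\ref{thm:main-WOT} and the convex-mean notions are only available under \eqref{eq:kappa>0}. One must check that the kernel produced satisfies $p_x\in\mathcal{P}_2(D)$, so that $C(p_x)\subset D$ is nonempty by Theorem~\ref{thm:barycenter}. This holds because $\nu(D)=1$ forces $p_x(D)=1$ for $\mu$-a.e.\ $x$. Beyond this bookkeeping, the statement is an immediate corollary of the main theorem.
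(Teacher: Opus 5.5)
Your proof is correct and follows the paper's route: both directions are read off from Theorem~\ref{thm:main-WOT}. For necessity, you show $\mathcal{T}_c(\mu,\nu)=0$ and then use an optimal kernel together with the closedness of $C(p_x)$. The only difference is cosmetic: the paper obtains $\mathcal{T}_c(\mu,\nu)=0$ from the dual formula (using $Qg\le g$ on $D$ and $\mu(D)=1$), whereas you obtain it from the primal identification with the projection problem by taking $\eta=\mu$, which is equally valid and slightly more direct.
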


\proof
The existence of $p=(p_x)_{x\in E}$ such that $\mu p=\nu$ and $x\in C(p_x)$ for $\mu$-almost every $x\in E$ immediately implies $\mu\lc \nu$.
Conversely, suppose that $\int_D g\,d\mu \leq \int_D g\,d\nu$ for all l.s.c.\ convex functions $g:E\to \R\cup\{+\infty\}$.
Since $Qg\leq g$ on $D$ and $\mu(D)=1$, we also have $\int_D Qg\,d\mu \leq \int_D g\,d\nu$. Therefore, by the duality formula of Theorem \ref{thm:main-WOT}, we get $\mathcal{T}_c(\mu,\nu)=0$.
Then an optimal kernel $p$ satisfies $\int_D d^2(x,C(p_x))\,\mu(dx)=0$, and hence $x \in C(p_x)$ for $\mu$-almost all $x$.
\endproof

\begin{rem}
In the $\kappa>0$ case, the assumption \eqref{eq:kappa>0} of Theorem \ref{thm:strassen} is not optimal.
For instance, assuming $E$ is a locally compact $\mathrm{CAT}(\kappa)$ space with $\kappa>0$, Theorem 2.3 of \cite{Ciosmak2023a} together with item 2. of Proposition \ref{lem:tech}, shows that if $\mu \lc \nu$ and $\nu$ is supported on a closed convex set $D$ included in a ball $\overline{B}(x_0,\varepsilon)$ with $\varepsilon<D_\kappa/2$, then there exists a probability kernel $p=(p_x)_{x\in E}$ such that $\mu p=\nu$ and $x\in C(p_x)$ for $\mu$-almost every $x\in E$. We refer to \cite{Ciosmak2023a,Ciosmak2023,Ciosmak-Cor} and \cite{pramenkovic2025} for other variants of Strassen theorem involving more general cone of functions.
\end{rem}

\section{Regularity of the optimal map}\label{sec:regularity}

\subsection{Key convexity estimate for the regularity}

The following result studies the solutions of the weak optimal transport problem given by Theorem \ref{thm:main-WOT}.

\begin{prop} \label{prop:lemme_technique}
Let $(E,d)$ be a complete separable $\CAT(\kappa)$ space with $\kappa \geq 0$, and let $\mu,\nu \in \mathcal{P}_2(E)$.
In the $\kappa>0$ case, we assume \eqref{eq:kappa>0}.
Let $T$ be the optimal map given by Theorem \ref{thm:main-WOT}.
There exists a subset $A \subset \operatorname{supp}\mu$ with $\mu(A)=1$ such that for all $x,y \in A$,
\begin{equation}
        \label{eq:technical_ineq}
       k d^2\bigl(T(x),T(y)\bigr) \leq d^2\bigl(x,T(y)\bigr) +d^2\bigl(y,T(x)\bigr) - d^2\bigl(x,T(x)\bigr)-d^2\bigl(y,T(y)\bigr),
\end{equation}
with $k=2$ if $\kappa = 0$ and $k = 2\sqrt{\ka}\ep/\tan(\sqrt{\ka}\ep)$ if $\ka >0$, where $\ep = \ep_{\mu,\nu}$.
\end{prop}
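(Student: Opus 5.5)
We will combine the $c$-monotonicity of Corollary~\ref{cor:cyclical} with the stability of convex means along geodesics (Lemma~\ref{rem:convex_mean_geodesic}) and the $k$-convexity of the squared distance. Let $p$ be an optimal kernel and $T$ the map of Theorem~\ref{thm:main-WOT}, so that $T(x)$ is the metric projection of $x$ to $C(p_x)$ for $\mu$-a.e.\ $x$. Take $A$ to be the intersection of the set of full measure given by Corollary~\ref{cor:cyclical} with the set of full measure where item (2) of Theorem~\ref{thm:main-WOT} holds. Fix $x,y\in A$, and let $(z_t)_{t\in[0,1]}$ be the geodesic from $z_0=T(x)$ to $z_1=T(y)$; both endpoints lie in $D$, so the geodesic is unique and contained in $D$.

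For $t\in[0,1]$ define the competitor measures
\[
q_1:=(1-t)p_x+t\,p_y,\qquad q_2:=t\,p_x+(1-t)p_y,
\]
which belong to $\mathcal{P}_2(D)$ and satisfy $q_1+q_2=p_x+p_y$. Since $T(x)\in C(p_x)$ and $T(y)\in C(p_y)$, Lemma~\ref{rem:convex_mean_geodesic} gives $z_t\in C(q_1)$ and $z_{1-t}\in C(q_2)$. Hence $c(x,q_1)\le d^2(x,z_t)$ and $c(y,q_2)\le d^2(y,z_{1-t})$, and the monotonicity inequality of Corollary~\ref{cor:cyclical} yields
\[
d^2(x,T(x))+d^2(y,T(y))\le d^2(x,z_t)+d^2(y,z_{1-t}).
\]

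Next we apply $k$-convexity of $d^2(x,\cdot)$ and $d^2(y,\cdot)$ on $D$. For $\kappa=0$ this is \eqref{eq:CAT0} with $k=2$. For $\kappa>0$ it follows from Lemma~\ref{lem:Ohta} with $\ep=\ep_{\mu,\nu}$, because $x,y\in\operatorname{supp}\mu$ implies $D\subset\overline{B}(x,\ep)\cap\overline{B}(y,\ep)$ by \eqref{eq:kappa>0}. This gives
\[
d^2(x,z_t)\le (1-t)d^2(x,T(x))+t\,d^2(x,T(y))-\tfrac k2 t(1-t)d^2(T(x),T(y)),
\]
and similarly for $d^2(y,z_{1-t})$, with $T(y)$ weighted by $1-t$ and $T(x)$ weighted by $t$. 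Summing the two bounds, inserting them into the monotonicity inequality, and cancelling the terms of weight $1-t$ leaves
\[
t\bigl[d^2(x,T(x))+d^2(y,T(y))\bigr]\le t\bigl[d^2(x,T(y))+d^2(y,T(x))\bigr]-k\,t(1-t)\,d^2(T(x),T(y)).
\]
Dividing by $t>0$ and letting $t\to0$ gives \eqref{eq:technical_ineq}.

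We expect the main delicate point to be the bookkeeping in this last step. One could not simply swap the kernels, i.e.\ take $t=1$: the curvature term $t(1-t)$ vanishes there and only a $0$ coefficient survives. The small-$t$ limit is what produces the full constant $k$ instead of $k/2$. The remaining checks are routine. The competitors $q_1,q_2$ must be admissible in Corollary~\ref{cor:cyclical}, which holds since they are probability measures in $\mathcal{P}_2(D)$. We also use that $x,y\in\operatorname{supp}\mu$, which is ensured by $A\subset\operatorname{supp}\mu$, so that the localized convexity constant applies in the $\kappa>0$ case.
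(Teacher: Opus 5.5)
Your proposal is correct and follows essentially the same route as the paper. Both arguments test the $c$-monotonicity of Corollary~\ref{cor:cyclical} on the mixed kernels $(1-t)p_x+tp_y$ and $tp_x+(1-t)p_y$, place convex means along the geodesic joining $T(x)$ and $T(y)$ via Lemma~\ref{rem:convex_mean_geodesic}, apply the $k$-convexity of $d^2(x,\cdot)$ and $d^2(y,\cdot)$ on $D$, and then divide by $t$ and let $t\to 0$; your bookkeeping, including intersecting $A$ with the set where $T(x)$ is the projection to $C(p_x)$, matches the paper's argument.
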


\begin{proof}
Let $p$ be an optimal kernel for $\mathcal{T}_c(\mu,\nu)$.
By Corollary \ref{cor:cyclical}, for any $x,y \in A$ (given in Corollary \ref{cor:cyclical}) and $q_1,q_2 \in \mathcal{P}_2(D)$ such that $q_1+q_2 = p_{x}+p_{y}$, it holds
\[
c(x,p_{x})+c(y,p_{y}) \leq c(x,q_1)+c(y,q_2).
\]
Taking $q_1 = (1-t) p_{x}+tp_{y}$ and $q_2 = (1-t) p_{y}+tp_{x}$, we get
\[
d^2\bigl(x,T(x)\bigr)+d^2\bigl(y,T(y)\bigr) \leq d^2\bigl(x,C(q_1)\bigr) +  d^2\bigl(y,C(q_2)\bigr).
\]
Let $[T(x),T(y)]_{t \in [0,1]}$ be the geodesic joining $T(x)$ to $T(y)$.
Following Lemma~\ref{rem:convex_mean_geodesic}, $[T(x),T(y)]_t$ is a convex mean of $q_1$, and $[T(y),T(x)]_t$ is a convex mean of $q_2$.
So we get
\[
d^2\bigl(x,T(x)\bigr)+d^2\bigl(y,T(y)\bigr) \leq d^2\bigl(x, [T(x),T(y)]_t\bigr) +  d^2\bigl(y, [T(y),T(x)]_t\bigr).
\]
In the case $\ka>0$, Lemma \ref{lem:Ohta} together with \eqref{eq:kappa>0} yields that the function $d^2(x,\,\cdot\,)$ is $k$-convex on $D$ for any $x$ in the support of $\mu$, with $k=2\sqrt{\ka}\ep/\tan(\sqrt{\ka}\ep)$.
For $\ka=0$, this holds with $k=2$.
Thus we have
\begin{multline*}
d^2\bigl(x, [T(x),T(y)]_t\bigr) +  d^2\bigl(y, [T(y),T(x)]_t\bigr)\\ \leq (1-t)d^2\bigl(x,T(x)\bigr)+td^2\bigl(x,T(y)\bigr) + (1-t)d^2\bigl(y,T(y)\bigr)+td^2\bigl(y,T(x)\bigr) - kt(1-t)d^2\bigl(T(x),T(y)\bigr).
\end{multline*}
Putting everything together, we get
\[
k(1-t)d^2\bigl(T(x),T(y)\bigr) \leq d^2\bigl(x,T(y)\bigr) +d^2\bigl(y,T(x)\bigr) - d^2\bigl(x,T(x)\bigr)-d^2\bigl(y,T(y)\bigr).
\]
Letting $t \to 0$ yields the claimed inequality \eqref{eq:technical_ineq}.
\end{proof}

\subsection{Non-positive curvature spaces}

We are now ready to complete the proof of Theorem \ref{thm:BS-NPC}.

\proof[Proof of Theorem \ref{thm:BS-NPC}]
The existence and uniqueness of $\bmu$ and $T$ follow from Theorem \ref{thm:main-WOT}.
It remains to prove that $T$ is $1$-Lipschitz.
By Proposition \ref{prop:lemme_technique} with $D = E$, it holds
\[
  2 d^2\bigl(T(x),T(y)\bigr) \leq d^2\bigl(x,T(y)\bigr) +d^2\bigl(y,T(x)\bigr) - d^2\bigl(x,T(x)\bigr)-d^2\bigl(y,T(y)\bigr),
\]
for $x,y \in A$, with $\mu(A) = 1$.
According to Reshetnyak’s quadruple comparison (see \cite[Proposition 2.4]{SturmNPC}), we have
\[
d^2\bigl(x,T(y)\bigr) +d^2\bigl(y,T(x)\bigr) - d^2\bigl(x,T(x)\bigr)-d^2\bigl(y,T(y)\bigr) \leq 2 d(x,y)d\bigl(T(x),T(y)\bigr).
\]
This yields
\[
d\bigl(T(x),T(y)\bigr) \leq d(x,y),
\]
for  all $x,y \in A$.
Since $\mu(A)=1$, $A$ is dense inside the support of $\mu$.
Therefore, $T$ admits a unique $1$-Lipschitz version defined on the support of $\mu$ which completes the proof.
\endproof

Beyond the existence of the projection $\bar\mu$, the proof of Theorem \ref{thm:BS-NPC} sheds light on the structure of the optimal weak transport plan from $\mu$ to $\nu$.

\begin{rem}\label{rem:martingale}
As in the Euclidean case, the optimal weak transport plan from $\mu$ to $\nu$ (see Theorem \ref{thm:projection_WOT}) decomposes into a deterministic $1$-Lipschitz first step, namely the transport map $T$ from $\mu$ to $\bar\mu$ given by Theorem \ref{thm:BS-NPC}, followed by a second non-deterministic step coupling $\bar\mu$ and $\nu$.
More precisely, since $\bar{\mu} \lc \nu$, by applying Theorem \ref{thm:strassen}, one obtains a kernel $(q_y)_{y\in E}$ such that $\bar\mu q = \nu$ and $\delta_y \lc q_y$ for $\bar\mu$-almost every $y$, which is precisely the definition of a martingale coupling in the sense of Émery and Mokobodzki \cite{EM91}.
Such couplings are in general not unique.
Since $y$ is only a convex mean of $q_y$ and need not be its barycenter, this coupling is moreover not always a two time-step martingale in the (stronger) sense of Sturm \cite{Sturm02}.
\end{rem}

As consequences of Theorem \ref{thm:BS-NPC}, the convex order projection enjoys two properties of independent interest: a Pythagorean-type inequality and a non-expansiveness property (generalizing \cite[Proposition 2.2]{Alfonsi-Jourdain-preprint}).

\begin{prop}\label{prop:projection_properties}
Let $(E,d)$ be a complete separable $\CAT(0)$ space and let $\nu \in \mathcal{P}_2(E)$.
For $\mu \in \mathcal{P}_2(E)$, denote by $\bar\mu$ the convex order projection of $\mu$ to $\{\eta \in \mathcal{P}_2(E) : \eta \lc \nu\}$ given by Theorem \ref{thm:BS-NPC}.
Then we have the following.
\begin{enumerate}
    \item $W_2^2(\mu,\nu) \geq W_2^2(\mu,\bar\mu) + W_2^2(\bar\mu,\nu)$ for every $\mu \in \mathcal{P}_2(E)$.
    \item $W_2(\bar\mu_1,\bar\mu_2) \leq W_2(\mu_1,\mu_2)$ for every $\mu_1,\mu_2 \in \mathcal{P}_2(E)$.
\end{enumerate}
\end{prop}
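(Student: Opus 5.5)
For item (1), I would work with the weak transport structure from Theorem \ref{thm:projection_WOT}. Let $p$ be an optimal kernel and $T$ the $1$-Lipschitz map from Theorem \ref{thm:BS-NPC}, so that $T(x)$ is the metric projection of $x$ onto $C(p_x)$. Take an optimal $W_2$ coupling $\gamma$ of $(\mu,\nu)$ and disintegrate it as $\gamma(dxdy)=\mu(dx)\gamma_x(dy)$.

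The idea is to compare $\gamma$ with a coupling of $(\bar\mu,\nu)$ built from $\gamma$. Push $\gamma$ forward by $(x,y)\mapsto (T(x),y)$. This gives a coupling of $\bar\mu$ and $\nu$, so
\[
W_2^2(\bar\mu,\nu)\le \int d^2(T(x),y)\,\gamma(dxdy).
\]
It then suffices to prove the pointwise-in-average inequality
\[
\int d^2(x,y)\,\gamma(dxdy)\ \ge\ \int d^2(x,T(x))\,\mu(dx)+\int d^2(T(x),y)\,\gamma(dxdy).
\]

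I would get this from the proximal characterization in item (3) of Theorem \ref{thm:projection_WOT}. For $\mu$-a.e.\ $x$, $T(x)$ minimizes $z\mapsto f(z)+d^2(x,z)$. This function is $2$-convex (convex $f$ plus a $2$-convex squared distance), so comparing the minimum with geodesics from $T(x)$ toward any $y$ gives the variational inequality
\[
f(y)+d^2(x,y)\ \ge\ f(T(x))+d^2(x,T(x))+d^2(T(x),y).
\]
Integrating against $\gamma$ gives
\[
\int f\,d\nu+\int d^2\,d\gamma\ \ge\ \int f\circ T\,d\mu+\int d^2(x,T(x))\,d\mu+\int d^2(T(x),y)\,d\gamma .
\]
From the duality equalities in the proof of Theorem \ref{thm:main-WOT}, we have $\int f\,d\bar\mu=\int f\,d\nu$: equality $\int Qf\,d\mu-\int f\,d\nu=\int Qf\,d\mu-\int f\,d\bar\mu$ holds there. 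Hence $\int f\circ T\,d\mu=\int f\,d\nu$, these terms cancel, and item (1) follows.

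The main obstacle is justifying the variational inequality with constant exactly $1$ in front of $d^2(T(x),y)$. It needs the $2$-convexity from \eqref{eq:CAT0}: dividing $\phi(z_t)-\phi(z_0)\ge0$ by $t$ and letting $t\to0$, the convexity of $f$ and \eqref{eq:CAT0} give $0\le f(y)-f(T(x))+d^2(x,y)-d^2(x,T(x))-d^2(T(x),y)$. There are also integrability issues with $f$, which I would handle since $f\in L^1(\nu)$ and $f\ge Qf(x_0)-d^2(x_0,\cdot)$.

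For item (2), let $T_1,T_2$ be the $1$-Lipschitz maps for $\mu_1,\mu_2$ and take an optimal coupling $\pi$ of $(\mu_1,\mu_2)$. Pushing $\pi$ forward by $T_1\times T_2$ yields a coupling of $(\bar\mu_1,\bar\mu_2)$, but the two maps differ, so Lipschitz continuity of a single map is not enough. Instead I would use the estimate of Proposition \ref{prop:lemme_technique} generalized to two problems. The kernels $p^1,p^2$ both have second marginal $\nu$, so I would exchange mass between $p^1_x$ and $p^2_y$ along a coupling of the kernels compatible with $\pi$ and invoke optimality of each problem. The aim is the analogue of \eqref{eq:technical_ineq} in integrated form,
\[
2\,\E\,d^2(T_1X,T_2Y)\le \E\bigl[d^2(X,T_2Y)+d^2(Y,T_1X)-d^2(X,T_1X)-d^2(Y,T_2Y)\bigr],
\]
and then to conclude by Reshetnyak's quadruple comparison and Cauchy–Schwarz. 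A cleaner route may be duality: with optimal potentials $f_1,f_2$, apply the three-point inequality above to each, pair them, and use $\int f_i\,d\bar\mu_i=\int f_i\,d\nu\ge\int f_i\,d\bar\mu_j$ (convex order). This gives $\E[d^2(X,T_2Y)-d^2(X,T_1X)-d^2(T_1X,T_2Y)]\ge \E[f_1(T_1X)-f_1(T_2Y)]$ and symmetrically, whose sum is $\ge0$ in expectation. That yields exactly the displayed integrated inequality, and this is the route I would try first.
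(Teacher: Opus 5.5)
Your argument is correct, but it follows a different route from the paper's.

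The paper lifts everything to the Hadamard space $L^2(\Omega,E)$ over a non-atomic probability space. There the set $\widetilde K$ of random variables with law dominated by $\nu$ is closed and convex. The map $X\mapsto T\circ X$ is the metric projection onto $\widetilde K$, which uses only that $T$ is the optimal map to $\bar\mu$. Items (1) and (2) are then the Pythagorean inequality and the non-expansiveness of metric projections in $\CAT(0)$ spaces.

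You stay at the level of measures and use the dual optimizer instead:
\begin{itemize}
\item The prox characterization of Theorem~\ref{thm:main-WOT}(3), together with the $2$-convexity \eqref{eq:CAT0}, gives the three-point inequality $f(y)+d^2(x,y)\ge f(T(x))+d^2(x,T(x))+d^2(T(x),y)$.
\item The complementary slackness $\int f\,d\bar\mu=\int f\,d\nu$, and the bound $\int f_i\,d\bar\mu_j\le\int f_i\,d\nu$ coming from $\bar\mu_j\lc\nu$, play the role that convexity of $\widetilde K$ plays in the paper.
\item For (2) you rerun the Reshetnyak argument of the proof of Theorem~\ref{thm:BS-NPC}, in integrated form for two different maps, and close with Cauchy--Schwarz.
\end{itemize}

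What each buys: the paper's proof is shorter and does not need the dual problem. It does, however, rely on external facts: that $L^2(\Omega,E)$ is $\CAT(0)$, that $\widetilde K$ is closed, and the projection theorems for $\CAT(0)$ spaces. Yours is self-contained given Theorem~\ref{thm:main-WOT}, at the cost of some integrability bookkeeping. You handle that correctly: $[f]_-$ has at most quadratic growth, and $\int f\,d\bar\mu_j\le\int f\,d\nu<\infty$.

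A small bonus of your route is that it does not use the global $\CAT(0)$ structure of $L^2(\Omega,E)$. Under \eqref{eq:kappa>0}, the same three-point argument with $k$-convexity yields $W_2^2(\mu,\nu)\ge W_2^2(\mu,\bar\mu)+\frac{k}{2}W_2^2(\bar\mu,\nu)$.
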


\begin{proof}
Let $(\Omega,\mathcal{A},\mathbb{P})$ be a non-atomic Polish probability space, and consider the space $L^2(\Omega,E)$ of square integrable $E$-valued random variables, equipped with the metric $D(X,Y) := \sqrt{\mathbb{E}[d^2(X,Y)]}$.
According to \cite[Proposition 3.10]{SturmNPC}, $(L^2(\Omega,E),D)$ is itself a $\CAT(0)$ space, whose geodesics are given pointwise by those of $E$:
the geodesic from $X_0$ to $X_1$ is $X_t(\omega) = [X_0(\omega),X_1(\omega)]_t$.
Since $\Omega$ is non-atomic, $W_2^2(\mu_1,\mu_2)$ coincides with $\min\{D^2(X_1,X_2) : \mathrm{Law}(X_i) = \mu_i\}$, the minimum being attained.

The set $\widetilde K := \{Z\in L^2(\Omega,E) : \mathrm{Law}(Z) \lc \nu\}$ is convex by testing convex functions pointwise along geodesics.
It is also closed, since by Lemma \ref{lem:tech}, the convex order can be tested against only Lipschitz functions bounded from below.
Let $T$ denote the optimal $1$-Lipschitz transport map from $\mu$ to $\bar\mu$ provided by Theorem \ref{thm:BS-NPC}.
Then for every random variable $X\in L^2(\Omega,E)$ of law $\mu$, the random variable $\bar X := T\circ X$ is the metric projection of $X$ to $\widetilde K$.
Indeed, $\bar X \in \widetilde K$ with $D^2(X,\bar X) = W_2^2(\mu,\bar\mu)$, while any $Z\in \widetilde K$ satisfies $D^2(X,Z) \geq W_2^2(\mu,\mathrm{Law}(Z)) \geq W_2^2(\mu,\bar\mu)$.

Now suppose that $(X,Y)$ gives an optimal coupling of $\mu$ and $\nu$.
The variable $Y$ lies in $\widetilde K$, so we can apply the Pythagorean-type inequality for metric projections in $\CAT(0)$ spaces (see \cite[Theorem 2.1.12(ii)]{Bacak}), which implies
\[ W_2^2(\mu,\nu) = D^2(X,Y) \geq D^2(X,\bar X) + D^2(\bar X, Y) \geq W_2^2(\mu,\bar  \mu) + W_2^2(\bar \mu, \nu).
\]
This shows item (1).

Item (2) follows similarly from the non-expansiveness of metric projections in $\CAT(0)$ spaces (see \cite[Theorem 2.1.12(iii)]{Bacak}) applied to two random variables  $X_1,X_2$ of laws $\mu_1,\mu_2$ realizing $D(X_1,X_2) = W_2(\mu_1,\mu_2)$.
Namely, denoting by $\bar X_i$ the metric projection of $X_i$ to $\widetilde K$, we have
\[
W_2(\bar\mu_1,\bar\mu_2)
 \leq D(\bar X_1,\bar X_2)
 \leq D(X_1,X_2).
 \qedhere \]
\end{proof}

\subsection{Spaces with curvature bounded from above}

In the $\kappa >0$ setting, Theorem \ref{thm:BS-NPC} still admits a partial extension, provided that the measures are sufficiently close.
One still obtains continuity of the optimal transport map, but only with $1/2$-H\"older regularity instead of $1$-Lipschitz regularity.

\begin{thm}\label{thm:BS-CATkappa}
Let $(E,d)$ be a complete separable $\CAT(\kappa)$ space with $\kappa > 0$, and let $\mu,\nu \in \mathcal{P}_2(E)$.
We assume \eqref{eq:kappa>0}.
Then the optimal transport map $T$ from $\mu$ to $\bar{\mu}$ given by Theorem \ref{thm:main-WOT} is H\"older continuous with exponent $1/2$.
More precisely, for all $x,y$ in the support of $\mu$,
\[
d\bigl(T(x),T(y)\bigr) \leq \sqrt{\frac{2\tan(\sqrt{\ka}\ep)}{\sqrt{\ka}}}\, d(x,y)^{1/2},
\]
with $\ep=\ep_{\mu,\nu}$.
\end{thm}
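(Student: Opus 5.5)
Starting from Proposition~\ref{prop:lemme_technique}, there is a set $A\subset\operatorname{supp}\mu$ with $\mu(A)=1$ such that for all $x,y\in A$
\[
k\, d^2\bigl(T(x),T(y)\bigr) \leq d^2\bigl(x,T(y)\bigr)+d^2\bigl(y,T(x)\bigr)-d^2\bigl(x,T(x)\bigr)-d^2\bigl(y,T(y)\bigr),
\qquad k=\frac{2\sqrt{\kappa}\,\ep}{\tan(\sqrt{\kappa}\,\ep)}.
\]
In the $\CAT(0)$ case the right-hand side was controlled by Reshetnyak's quadruple comparison, which is not available here. Instead I will use only the triangle inequality, which costs a square root. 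Write $a=d(x,T(x))$, $b=d(y,T(y))$ and $\delta=d(x,y)$. Then $d(x,T(y))\leq \delta+b$ and $d(y,T(x))\leq \delta+a$.

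The right-hand side is therefore at most
\[
(\delta+b)^2-b^2+(\delta+a)^2-a^2 = 2\delta^2+2\delta(a+b).
\]
Both $T(x),T(y)\in D$ and $x,y\in\operatorname{supp}\mu$. By \eqref{eq:kappa>0} this gives $a,b\leq\ep$, and also $\delta\leq d(x,T(y))+d(T(y),y)\leq 2\ep$. A cruder bound is to note that $d(x,T(y))\leq\ep$ and $d(y,T(x))\leq\ep$ directly, which does not help by itself. The useful route is instead to bound $d^2(x,T(y))-d^2(y,T(y)) \leq \delta\,(d(x,T(y))+d(y,T(y)))\leq 2\ep\delta$, and symmetrically for the other pair. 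Hence the right-hand side is at most $4\ep\,\delta$, and
\[
d^2\bigl(T(x),T(y)\bigr)\leq \frac{4\ep}{k}\,d(x,y)=\frac{2\tan(\sqrt{\kappa}\,\ep)}{\sqrt{\kappa}}\,d(x,y),
\]
which is exactly the claimed constant. This is the key step. The only care needed is to use the factorization $u^2-v^2=(u-v)(u+v)$ with $|u-v|\leq\delta$ and $u+v\leq 2\ep$ via \eqref{eq:kappa>0}, rather than expanding squares, so that no spurious $\delta^2$ term appears.

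Finally, I will extend from $A$ to $\operatorname{supp}\mu$. Since $\mu(A)=1$, $A$ is dense in $\operatorname{supp}\mu$, and $T$ is uniformly $1/2$-Hölder on $A$ with values in the complete set $D$. Hence $T$ extends uniquely to a $1/2$-Hölder map on $\operatorname{supp}\mu$ with the same constant. This extension agrees with $T$ $\mu$-a.e., so it is still the optimal map of Theorem~\ref{thm:main-WOT}, and the inequality holds for all $x,y$ in the support. I expect no real obstacle beyond checking that all the relevant distances are bounded by $\ep$ under \eqref{eq:kappa>0}.
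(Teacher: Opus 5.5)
Your proposal is correct and follows the paper's proof: you start from Proposition~\ref{prop:lemme_technique}, factor each difference as $u^2-v^2=(u-v)(u+v)$, bound $u-v\le d(x,y)$ by the triangle inequality and $u+v\le 2\ep$ by \eqref{eq:kappa>0}, which gives $k\,d^2(T(x),T(y))\le 4\ep\, d(x,y)$, and then extend from the dense full-measure set $A$ to $\operatorname{supp}\mu$. Your first attempt, which expands the squares and leaves the extra term $2\delta^2+2\delta(a+b)$, is an unnecessary detour you could simply delete.
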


\begin{proof}
According to Proposition \ref{prop:lemme_technique}, for any two points $x_1,x_2 \in A$,
\[
k d^2\bigl(T(x_1),T(x_2)\bigr)
\leq
d^2\bigl(x_1,T(x_2)\bigr)+d^2\bigl(x_2,T(x_1)\bigr)
-d^2\bigl(x_1,T(x_1)\bigr)-d^2\bigl(x_2,T(x_2)\bigr),
\]
with $k=2\sqrt{\ka}\ep/\tan(\sqrt{\ka}\ep)$.
Using the identity $a^2-b^2=(a-b)(a+b)$ on the right hand side, we obtain
\[
k d^2\bigl(T(x_1),T(x_2)\bigr)
\leq
\sum_{i=1}^2
\bigl[d\bigl(x_{3-i},T(x_i)\bigr)-d\bigl(x_i,T(x_i)\bigr)\bigr]
\bigl[d\bigl(x_{3-i},T(x_i)\bigr)+d\bigl(x_i,T(x_i)\bigr)\bigr].
\]
By the triangle inequality, for both $i=1,2$,
\[
d\bigl(x_{3-i},T(x_i)\bigr)-d\bigl(x_i,T(x_i)\bigr) \le d(x_1,x_2),
\]
and by \eqref{eq:kappa>0},
\[
d\bigl(x_{3-i},T(x_i)\bigr)+d\bigl(x_i,T(x_i)\bigr) \le 2\ep.
\]
Therefore
\[
k d^2\bigl(T(x_1),T(x_2)\bigr)
\le 4\ep\,d(x_1,x_2).
\]
Since $k = 2\sqrt{\ka}\ep/\tan(\sqrt{\ka}\ep)$, this yields
\[
d^2\bigl(T(x_1),T(x_2)\bigr) \le \frac{2\tan(\sqrt{\ka}\ep)}{\sqrt{\ka}}\,d(x_1,x_2).
\]
Since $A$ is dense in the support of $\mu$, we conclude that $T$ admits a H\"older continuous version on $\operatorname{supp}\mu$.
\end{proof}

\section{A Strassen type result for barycentric martingales}\label{sec:barycentric}

Let $(E,d)$ be a $\CAT(0)$ space.
We assume in this section that $(E,d)$ is proper (i.e.\ closed balls in $E$ are compact), thus complete and separable.
Given two probability measures $\mu,\nu \in \mathcal{P}_2(E)$, it is well known that the existence of a martingale in the sense of \'Emery and Mokobodzki \cite{EM91} is equivalent to the convex domination of $\mu$ by $\nu$.
More precisely, according to e.g.\ Theorem \ref{thm:strassen}, $\mu\lc \nu$ if and only if there exists a pair of random variables $(M_0,M_1)$ with $M_0\sim \mu$ and $M_1\sim \nu$ such that for $\mu$-almost all $x \in E$, $x\in C(\mathrm{Law}(M_1 | M_0=x))$, where we recall that $C(p)$ denotes the set of convex means of a probability measure $p$ on $E$.

The aim of this section is to give a similar characterization for the existence of a martingale in the sense of Sturm \cite{Sturm02, SturmC}.
More precisely, we would like to characterize the set of couples $(\mu,\nu)$, $\mu,\nu \in \mathcal{P}_2(E)$, such that there exists a \emph{barycentric martingale} $(M_0,M_1)$ with $M_0\sim \mu$ and $M_1 \sim \nu$, that is a pair of random variables such that
\[
M_0 = \E[M_1|M_0] \qquad \text{a.s.},
\]
which means that $M_0 = b(\mathrm{Law}(M_1|M_0))$ a.s., where $b(p)$ denotes the barycenter of a probability measure $p\in \mathcal{P}_2(E)$ (recall Section \ref{sec:bary}).

For a measurable function $f : E \to \R \cup \{ +\infty \}$ with $f \geq -a(1+ d^2(x_0,\,\cdot\,))$ for some $a\geq 0$ and $x_0 \in E$, let us define its \emph{barycentric envelope} $\mathrm{bar}(f) : E \to \R\cup \{\pm \infty\}$ by
\[
\mathrm{bar}(f)(x) := \inf \left\{\int_E f\,dp : p \in \mathcal{P}_2(E),\, b(p) = x\right\}.
\]

\begin{remark}
Note that the function $\mathrm{bar}(f)$ is different from the convex envelope $\overline{f}$ considered in Section \ref{sec:envelope}.
It always holds $\overline{f} \leq \mathrm{bar}(f) \leq f$.
The function $\mathrm{bar}(f)$ is not convex in general, while $\overline{f}$ always is (recall Lemma \ref{lem:cvx-envelope}).
\end{remark}

\begin{remark}
Let $f:E \to \R \cup\{+\infty\}$ be a l.s.c.\ function with $f \geq -a(1+ d^2(x_0,\,\cdot\,))$ for some $a\geq 0$ and $x_0 \in E$.
Then $f$ is convex if and only if $\mathrm{bar}(f)=f$.
Indeed, $\mathrm{bar}(f)=f$ exactly means that
\[
f\bigl(b(p)\bigr) \leq \int_E f\,dp.
\]
Taking $p = (1-t) \delta_{x_0} + t \delta_{x_1}$ yields $f(x_t) \leq (1-t) f(x_0)+tf(x_1)$, with $(x_t)_{t\in [0,1]}$ the geodesic from $x_0$ to $x_1$.
Thus $f$ is convex.
Conversely, if $f$ is convex, then it satisfies Jensen's inequality \eqref{eq:Jensen} which amounts to $\mathrm{bar}(f)=f$.
\end{remark}

\begin{lem}
Let $f$ be a l.s.c.\ function, bounded from below and with $f\leq a(1+d^2(x_0,\,\cdot\,))$ for some $a \ge 0$ and $x_0\in E$.
Then $\mathrm{bar}(f)$ is measurable.
\end{lem}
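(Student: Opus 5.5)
The plan is to show that $\mathrm{bar}(f)$ is lower semicontinuous, hence Borel. Lower semicontinuity is natural here because $E$ is proper and $f$ is l.s.c. The argument has three steps. First, I reduce the infimum to measures with controlled second moment. Second, I use compactness of sublevel sets in $\mathcal{P}_2(E)$. Third, I pass to the limit in both the barycenter constraint and the integral.

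For the reduction, fix $x\in E$. The competitor $p=\delta_x$ gives
\[
\mathrm{bar}(f)(x)\le f(x)\le a(1+d^2(x_0,x)).
\]
So $\mathrm{bar}(f)$ is finite, since $f$ is bounded below by some $-m$, and it is locally bounded above. Let $x_n\to x$ with $\liminf_n \mathrm{bar}(f)(x_n)<\infty$. Pass to a subsequence realizing the liminf and pick $p_n$ with $b(p_n)=x_n$ and
\[
\int f\,dp_n\le \mathrm{bar}(f)(x_n)+1/n .
\]
Since $f\ge -m$, the integrals $\int f\,dp_n$ are uniformly bounded above. This alone does not bound $\int d^2(x_0,\cdot)\,dp_n$ or give tightness, because the assumption on $f$ is an upper bound rather than coercivity. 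This is the main obstacle, and I would handle it as follows.

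Rather than seeking $W_2$-compactness, I aim for narrow compactness plus a weaker control on barycenters. Tightness of $(p_n)$ is not automatic either, since mass could escape to infinity at bounded cost if $f$ is bounded. To deal with this, I would modify the competitors. Because $E$ is proper, mass of $p_n$ lying far away can be replaced by pushing it toward $x_n$ along geodesics, which changes the barycenter. So instead I would invoke the l.s.c. structure directly: work with the set
\[
S_M=\{(x,p):b(p)=x,\ \int f\,dp\le M\},
\]
and show that its projection to $E$ is closed. If tightness fails, I would fall back on showing measurability instead of l.s.c. The graph $\{(x,p):b(p)=x\}$ is closed in $E\times\mathcal{P}_2(E)$ by the contraction property \eqref{eq:W1contraction}. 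The map $p\mapsto\int f\,dp$ is Borel on $\mathcal{P}_2(E)$, being the monotone limit of integrals of bounded continuous functions below $f$, using that $f$ is l.s.c. and bounded below. Then the sublevel set
\[
\{\mathrm{bar}(f)<c\}=\operatorname{pr}_1\Bigl\{(x,p):b(p)=x,\ \int f\,dp<c\Bigr\}
\]
is the projection of a Borel subset of a Polish space, hence analytic and universally measurable.

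To upgrade to Borel measurability, I would use the upper bound on $f$. It suggests showing that $\mathrm{bar}(f)$ is upper semicontinuous instead, via the following construction. Given $p$ with $b(p)=x$ and $y$ near $x$, push $p$ forward by a map such as $z\mapsto[x,z]$ composed with a shift toward $y$. In a $\CAT(0)$ space this does not simply translate barycenters, which is the real difficulty. Alternatively, consider $p'=(1-t)p+t\delta_w$ with $w$ chosen so that $b(p')=y$; such a $w$ should exist by a continuity or degree argument in the proper space, and its cost is at most $t\,a(1+d^2(x_0,w))$, which is small. If this works, $\mathrm{bar}(f)$ is u.s.c., hence Borel. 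I expect the existence of such $w$ with $t\to0$ as $y\to x$ to be the crux.
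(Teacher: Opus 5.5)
Your fallback argument is correct as far as it goes, but it proves only universal measurability, not Borel measurability. The graph $\{(x,p): b(p)=x\}$ is closed in the Polish space $E\times\mathcal{P}_2(E)$ by \eqref{eq:W1contraction} and $W_1\le W_2$. The map $p\mapsto\int f\,dp$ is l.s.c., hence Borel. So $\{\mathrm{bar}(f)<c\}$ is the projection of a Borel set, hence analytic, and $\mathrm{bar}(f)$ is universally measurable. This uses neither properness nor the growth bound, and it already suffices to make sense of $\int \mathrm{bar}(f)\,d\mu$ in Theorem~\ref{thm:Strassenbar}.

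The paper's proof gives Borel measurability, which is also what you set out to get, and your proposed upgrade cannot work because $\mathrm{bar}(f)$ need not be upper semicontinuous. Take $E=\{(u,v)\in\R^2 : v^2\le u\}$, a closed convex subset of $\R^2$ and hence a proper $\CAT(0)$ space. Let $f(u,v)=v^2/u$ for $u>0$ and $f(0,0)=0$. Then $0\le f\le 1$ and $f$ is l.s.c.\ and convex, so $\mathrm{bar}(f)=f$ by Jensen. Yet $\mathrm{bar}(f)(s^2,s)=1$ while $\mathrm{bar}(f)(0,0)=0$. Your crux fails exactly here: the only competitor at the origin is $\delta_{(0,0)}$. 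Requiring $b((1-t)\delta_{(0,0)}+t\delta_w)=(s^2,s)$ forces $w=(s^2/t,s/t)$, which lies in $E$ only if $t\ge 1$.

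Nor is $\mathrm{bar}(f)$ l.s.c.\ in general, so your worry about tightness is justified. On the half-plane $\{u\ge 0\}$, let $f=1$ on $\{u<1/(1+|v|)\}$ and $f=0$ elsewhere. Then $\mathrm{bar}(f)(0,0)=1$. But $\mathrm{bar}(f)(s,0)=0$ for all $s>0$, using $\frac12(\delta_{(s,Y)}+\delta_{(s,-Y)})$ with $Y$ large. So no semicontinuity of $\mathrm{bar}(f)$ itself can close the argument.

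The missing idea is to regularize. Set $\mathrm{bar}_\ep(f)(x):=\inf\{\int (f+\ep d^2(x_0,\cdot))\,dp : b(p)=x\}$. Along a convergent sequence $x_n\to x$, the bound $\mathrm{bar}_\ep(f)(x_n)\le f(x_n)+\ep d^2(x_0,x_n)$ stays bounded; this is where the upper bound $f\le a(1+d^2(x_0,\cdot))$ is used. Hence nearly optimal $p_n$ have uniformly bounded second moments. Properness and Prokhorov then give a subsequence converging in $W_1$. The barycenter constraint passes to the limit by \eqref{eq:W1contraction}, and $p\mapsto\int g\,dp$ is l.s.c.\ for l.s.c.\ $g\ge 0$. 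So each $\mathrm{bar}_\ep(f)$ is l.s.c. Finally, $\mathrm{bar}(f)=\inf_k \mathrm{bar}_{1/k}(f)$ is a countable infimum of l.s.c.\ functions, hence Borel.
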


\proof
Assume, without loss of generality, that $0 \leq f\leq a(1+d^2(x_0,\,\cdot\,))$.
For each $\ep>0$, define
\[
\mathrm{bar}_\ep(f)(x) := \inf \left\{\int_E \bigl(f+\ep d^2(x_0,\,\cdot\,)\bigr) \,dp : p \in \mathcal{P}_2(E),\, b(p) = x\right\},\qquad x\in E.
\]
Then, it is easily seen  that $\mathrm{bar}(f)=\inf_{k\geq 1}\mathrm{bar}_{1/k}(f)$.
Let us show that, for all $\ep>0$, $\mathrm{bar}_\ep(f)$ is l.s.c., this will imply in particular that $\mathrm{bar}(f)$ is measurable.

Let $(x_n)$ be a sequence converging to $x \in E$, and $p_n\in \mathcal{P}_2(E)$ be such that $b(p_n)=x_n$ and
\[
\mathrm{bar}_\ep(f)(x_n) \geq \int_E \bigl(f+\ep d^2(x_0,\,\cdot\,)\bigr) \,dp_n - r_n
\]
for some $r_n>0$ converging to $0$.
Let $(n_k)$ be a sequence of integers such that  $\mathrm{bar}_\ep(f)(x_{n_k}) \to \liminf_{n\to \infty}\mathrm{bar}_\ep(f)(x_n)$.
Since
\[
\mathrm{bar}_\ep(f)(x_{n_k}) \leq f(x_{n_k})+\ep d^2(x_0,x_{n_k}) \leq a\bigl(1+d^2(x_0,x_{n_k})\bigr) +\ep d^2(x_0,x_{n_k}),
\]
we conclude that
\begin{equation}\label{eq:uniform-integrability}
    \sup_{k\geq 1}\int_E d^2(x_0,y)\,p_{n_k}(dy)<+\infty.
\end{equation}
Therefore, since $E$ is proper, it follows from Prokhorov's theorem that the sequence $(p_{n_k})$ is pre-compact for the weak topology. Using again \eqref{eq:uniform-integrability} and \cite[Theorem 7.12]{Villani}, we see that any converging subsequence of $(p_{n_k})$ is also converging for the $W_1$ distance.
Thus, extracting a subsequence if necessary, we can assume that $p_{n_k} \to p$ for $W_1$.
Then, by the continuity of the barycenter for the $W_1$ distance (which follows from \eqref{eq:W1contraction}), we have $b(p_{n_k}) \to b(p)$, and so $b(p)=x$ and
\[
\lim_{k \to \infty} \mathrm{bar}_\ep(f)(x_{n_k}) \geq \int_E \bigl(f+\ep d^2(x_0,\,\cdot\,)\bigr) \,dp
\]
where the inequality follows from the fact that, for any l.s.c.\ non-negative function $g$, the map $p \mapsto \int_E g \,dp$ is l.s.c.\ for the weak topology on $\mathcal{P}(E)$.
In particular, we see that $p \in \mathcal{P}_2(E)$, and hence
\[
\int_E \bigl(f+\ep d^2(x_0,\,\cdot\,)\bigr) \,dp \geq \mathrm{bar}_\ep(f)(x)
\]
from which follows that
\[
\liminf_{n \to \infty} \mathrm{bar}_\ep(f)(x_n) \geq \mathrm{bar}_\ep(f)(x),
\]
thus $\mathrm{bar}_\ep(f)$ is l.s.c.
\endproof

\begin{thm}\label{thm:Strassenbar}
Let $(E,d)$ be a proper $\CAT(0)$ space and $\mu,\nu \in \mathcal{P}_2(E)$.
Then the following are equivalent.
\begin{enumerate}
\item There exists a barycentric martingale $(M_0,M_1)$ such that $M_0 \sim \mu$ and $M_1 \sim \nu$.
\item For every l.s.c.\ function $f:E \to \R$, bounded from below and with $f\leq a(1+d^2(x_0,\,\cdot\,))$ for some $a>0$ and $x_0\in E$, it holds
\[
\int_E \mathrm{bar}(f)\,d\mu \leq \int_E f\,d\nu.
\]
\end{enumerate}
\end{thm}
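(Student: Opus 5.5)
The implication (1)$\Rightarrow$(2) is direct. Let $(M_0,M_1)$ be a barycentric martingale, and let $p_x:=\mathrm{Law}(M_1|M_0=x)$ be a disintegration kernel. Then $b(p_x)=x$ for $\mu$-a.e.\ $x$. Since $\int\!\int d^2(x_0,y)\,p_x(dy)\,\mu(dx)<\infty$, we also have $p_x\in\mathcal{P}_2(E)$ for $\mu$-a.e.\ $x$. Hence $\mathrm{bar}(f)(x)\le \int_E f\,dp_x$ for almost every $x$, and integrating against $\mu$ gives $\int_E \mathrm{bar}(f)\,d\mu\le\int_E f\,d\nu$. Measurability of $\mathrm{bar}(f)$ comes from the preceding lemma. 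Note that $\mathrm{bar}(f)$ is bounded below and bounded above by $f\le a(1+d^2(x_0,\cdot))$, so it is $\mu$-integrable.

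For (2)$\Rightarrow$(1) I would follow the weak optimal transport strategy of Theorem~\ref{thm:main-WOT}, now with the barycentric cost $c_b(x,p):=d^2(x,b(p))$ in place of $d^2(x,C(p))$. By \eqref{eq:W1contraction}, $c_b$ is continuous on $E\times\mathcal{P}_2(E)$. It is not convex in $p$ in general. However, the fundamental theorem of \cite{Beiglbock-Fundamental} gives existence of optimizers and duality for lower semicontinuous costs bounded below, at least on Polish spaces, with the convexity assumption needed only for some refinements. So I would first check whether the duality
\[
\mathcal{T}_{c_b}(\mu,\nu)=\sup_f\Bigl\{\int_E Q_{c_b}f\,d\mu-\int_E f\,d\nu\Bigr\}
\]
holds for l.s.c.\ (or continuous bounded) $f$. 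If convexity turns out to be needed there, I would instead argue by hand: minimize over the set of couplings $\pi\in\Pi(\mu,\nu)$, which is weakly compact, of the functional $\pi\mapsto\int c_b(x,\pi_x)\,\mu(dx)$, and use a minimax or Hahn--Banach argument.

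As in the proof of Theorem~\ref{thm:main-WOT}, exchanging the infima gives
\[
Q_{c_b}f(x)=\inf_{y}\Bigl\{\mathrm{bar}(f)(y)+d^2(x,y)\Bigr\}\le \mathrm{bar}(f)(x).
\]
Hypothesis (2) then makes every dual value nonpositive, so $\mathcal{T}_{c_b}(\mu,\nu)=0$. Existence of a minimizer $p$ (attainment) then gives $b(p_x)=x$ for $\mu$-a.e.\ $x$, and a pair $(M_0,M_1)$ with law $p_x(dy)\mu(dx)$ is the desired barycentric martingale.

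The main obstacle is justifying the duality and attainment without convexity of $c_b$ in $p$. Attainment should follow from properness: minimizing sequences of couplings are tight with uniformly bounded second moments, so they converge in $W_1$ along subsequences. The disintegrations need care, and a naive lower-semicontinuity argument fails because $p\mapsto d^2(x,b(p))$ is not convex. To get around this, I would try working with the lifted problem on $E\times\mathcal{P}(E)$: minimize $\int c_b(x,p)\,P(dx,dp)$ over laws $P$ with first marginal $\mu$ and intensity $\nu$. This is a linear problem, hence has duality and, by compactness, a minimizer. A minimizer $P$ of value $0$ is concentrated on $\{b(p)=x\}$, and then a kernel (possibly randomized, which is fine for the existence of $(M_0,M_1)$) yields the martingale. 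The duality for this linear problem reduces to testing against functions of the form $f(y)$, with the constraint $\int f\,dp+\ldots$ leading exactly to $\mathrm{bar}(f)$. The truncation conditions on $f$ (bounded below, quadratic growth) are what make the sup range match hypothesis (2).
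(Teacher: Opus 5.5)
Your (1)$\Rightarrow$(2) is the paper's argument, and you additionally spell out why $\mathrm{bar}(f)$ is measurable and $\mu$-integrable.

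For (2)$\Rightarrow$(1) you take a different route. The paper never uses the barycentric cost $d^2(x,b(p))$. Its one extra idea is to take
\[
C(x,p)=\int_E d^2(x,y)\,p(dy)-V(p),\qquad V(p)=\inf_{z\in E}\int_E d^2(z,y)\,p(dy).
\]
This cost is nonnegative, continuous on $E\times\mathcal{P}_2(E)$, vanishes exactly when $b(p)=x$, and is convex in $p$ because $V$ is an infimum of linear functionals. Convexity is what lets the paper cite the existence and duality theorems of \cite{BBP19} for kernels directly. Your remark that convexity is only needed for refinements is not accurate: existence of an optimal kernel is where it is used. With the convex cost, $R_Cf\le \mathrm{bar}(f)$, hypothesis (2) forces value $0$, and the optimal kernel is already the martingale.

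Your lifting to laws $P$ on $E\times\mathcal{P}(E)$ with first marginal $\mu$ and intensity $\nu$ is the standard remedy for a non-convex cost, and it can be made to work here. However, it puts on you the steps you only assert.
\begin{itemize}
\item \emph{Duality.} ``Linear, hence duality'' is not automatic. You need a Fenchel--Moreau or minimax argument, for instance showing that $\nu\mapsto\inf_P\int d^2(x,b(p))\,P(dx,dp)$ is convex and lower semicontinuous.
\item \emph{Topology and attainment.} Both duality and attainment must be carried out with the $W_1$ topology on $\mathcal{P}_1(E)$, since $b$ is not continuous for weak convergence. Tightness of the constraint set in that topology then follows from properness together with a de la Vall\'ee-Poussin bound for $\nu$.
\item \emph{De-randomization.} Suppose $b(p)=x$ for $P_x$-a.e.\ $p$. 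You must check that $\bar p_x=\int p\,P_x(dp)$ still has barycenter $x$. This holds because $y\mapsto\int d^2(y,z)\,\bar p_x(dz)=\int\!\int d^2(y,z)\,p(dz)\,P_x(dp)$ is a mixture of functionals each uniquely minimized at $y=x$. It deserves a line, since iterated barycenters satisfy no tower property in general $\CAT(0)$ spaces.
\end{itemize}

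In short, your approach is more general: any continuous cost whose zero set is stable under mixtures would do. The paper's choice of a convex cost with the same zero set reduces the whole analytic part to a citation.
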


\begin{proof}
$(1) \Rightarrow (2)$.
Let $(M_0,M_1)$ be a barycentric martingale with marginals $\mu,\nu$.
Denoting $p_x = \mathrm{Law}(M_1 | M_0=x)$, we get
\[
\E[f(M_1) | M_0=x] = \int_E f\,dp_x \geq \mathrm{bar}(f)(x),
\]
so $\E[f(M_1) | M_0] \geq \mathrm{bar}(f)(M_0)$ a.s.\ and thus $\E[f(M_1)] \geq \E[\mathrm{bar}(f)(M_0)]$.

$(2) \Rightarrow (1)$.
For $p \in \mathcal{P}_2(E)$, define its \emph{variance} as
\[
V(p) := \inf_{x\in E}\int_E d^2(x,y)\,p(dy) = \int_E d^2\bigl(b(p),y\bigr)\,p(dy).
\]
We consider the weak optimal transport problem between $\mu$ and $\nu$ associated to the cost function
\[
C(x,p) := \int_E d^2(x,y)\,p(dy) - V(p) \ge 0.
\]
We set
\[
\mathcal{T}_{C}(\mu,\nu) := \inf_{p} \int_E C (x,p_x)\,\mu(dx),
\]
where the infimum runs over the set of probability kernels $p=(p_x)_{x\in E}$ such that $\mu p = \nu$.
For fixed $z \in E$, the map $p\mapsto \int_E d^2(z,y)\,p(dy)$ is linear, so $V$ is concave (with respect to convex interpolations) as the infimum of linear functions.
The cost function $C(x,p)$ is therefore convex in $p$, and is also continuous on $E\times \mathcal{P}_2(E)$ equipped with the product topology.
Indeed, the continuity of $(x,p)\mapsto \int_E d^2(x,y)\,p(dy)$ is classical, and $V$ satisfies
\[
\bigl|\sqrt{V(p)}-\sqrt{V(q)}\bigr| \leq W_2(p,q),
\]
by Minkowski's inequality.
According to \cite[Theorems 1.2 and 1.3]{BBP19}, the infimum in $\mathcal{T}_C(\mu,\nu)$ is attained, and it holds
\begin{equation}\label{eq:dualityhatT}
\mathcal{T}_C(\mu,\nu) = \sup_{f} \left\{ \int_E R_C f\,d\mu - \int_E f\,d\nu \right\},
\end{equation}
where the supremum runs over continuous functions $f$ bounded from below with $f\leq a(1+d^2(x_0,\,\cdot\,))$ for some $a\geq0$ and $x_0\in E$, and
\[
R_C f(x) := \inf_{p\in\mathcal{P}_2(E)} \left\{ \int_E f\,dp + C(x,p) \right\}.
\]
For $p\in\mathcal{P}_2(E)$ with $b(p) = x$, we have $C(x,p) = 0$, thereby
\[
R_C f(x) \leq \mathrm{bar}(f)(x).
\]
Combining this with the assumption (2), we get
\[
\int_E R_C f\,d\mu \leq \int_E \mathrm{bar}(f)\,d\mu \leq \int_E f\,d\nu,
\]
and so \eqref{eq:dualityhatT} yields $\mathcal{T}_C(\mu,\nu) \leq 0$.
Since $C$ is non-negative, we obtain $\mathcal{T}_C(\mu,\nu) =0$.
If $p$ is the primal optimizer, we thus get
\begin{equation}\label{eq:zero}
\int_{E} C(x,p_x)\,\mu(dx) = 0.
\end{equation}
By the uniqueness of barycenter, $C(x,p) = 0$ if and only if $b(p) = x$, thus \eqref{eq:zero} ensures that $b(p_x) = x$ for $\mu$-almost every $x$.
The coupling $\pi(dxdy) = p_x(dy)\mu(dx)$ then provides a barycentric martingale with marginals $\mu$ and $\nu$, which completes the proof.
\end{proof}

\appendix \section{Measurability of the transport map}\label{sec:measurablility}

\begin{lem}[Joint continuity of metric projections]\label{lem:joint-projection}
Let $E$ be a complete $\CAT(\kappa)$ space with $\kappa \ge 0$ and $D\subset E$ be a closed convex subset.
When $\kappa>0$, we take $U\subset E$ such that
\[
  \sup_{x\in U} \sup_{y\in D} d(x,y)<D_\kappa/2,
\]
and set $U=E$ when $\kappa=0$.
For any nonempty closed convex subsets $K_n,K$ of $D$ and $x_n,x\in U$, if $x_n\to x$ and $d_H(K_n,K)\to 0$, then we have $P_{K_n}x_n\longrightarrow P_Kx$, where $P_Kx$ denotes the metric projection of $x$ to $K$.
\end{lem}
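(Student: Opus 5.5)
The plan is to prove this the standard way for projections onto varying convex sets: we compare $P_{K_n}x_n$ with a point of $K_n$ close to $P_Kx$, and then use the uniform convexity of $d^2(x,\cdot)$ to convert near-minimality into closeness. Set $y:=P_Kx$ and $y_n:=P_{K_n}x_n$. Both are well defined: $K_n,K$ are nonempty closed convex subsets of $D$, and in the $\kappa>0$ case all relevant distances are below $D_\kappa/2$ (the references cited in the proof of Theorem \ref{thm:main-WOT} apply). Let $\ep:=\sup_{u\in U}\sup_{z\in D}d(u,z)$ when $\kappa>0$. Then Lemma \ref{lem:Ohta} applies for every $u\in U$: the function $d^2(u,\cdot)$ is $k$-convex on $D$, with $k=2\sqrt\kappa\ep/\tan(\sqrt\kappa\ep)>0$, and $k=2$ when $\kappa=0$.

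\emph{Step 1 (variational inequality at the projection).} For $u\in U$, a closed convex $K'\subset D$ and $z\in K'$, I will show that
\[
d^2(u,z)\ge d^2(u,P_{K'}u)+\tfrac{k}{2}\,d^2(P_{K'}u,z).
\]
To see this, write $w:=P_{K'}u$ and let $z_t$ be the geodesic from $w$ to $z$, which stays in $K'$. The $k$-convexity gives $d^2(u,w)\le d^2(u,z_t)\le(1-t)d^2(u,w)+t\,d^2(u,z)-\frac k2t(1-t)d^2(w,z)$. Divide by $t$ and let $t\to0$.

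\emph{Step 2 (two-sided comparison).} Since $d_H(K_n,K)\to0$, choose $z_n\in K_n$ with $d(z_n,y)\le d_H(K_n,K)+1/n\to0$, and $w_n\in K$ with $d(w_n,y_n)\le d_H(K_n,K)+1/n$. Apply Step 1 in $K_n$ with base point $x_n$ and competitor $z_n$:
\[
\tfrac k2 d^2(y_n,z_n)\le d^2(x_n,z_n)-d^2(x_n,y_n).
\]
Apply Step 1 in $K$ with base point $x$ and competitor $w_n$:
\[
0\le d^2(x,w_n)-d^2(x,y).
\]
All points lie in $D\cup U$, which is bounded: when $\kappa=0$ this holds because $d(x_n,y_n)\le d(x_n,z_n)$ stays bounded. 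So the maps $d^2$ are uniformly Lipschitz on the relevant sets. Hence $d^2(x_n,z_n)=d^2(x,y)+o(1)$ and $d^2(x,w_n)=d^2(x_n,y_n)+o(1)$. Adding the two inequalities yields $\frac k2 d^2(y_n,z_n)\le o(1)$, so $d(y_n,y)\le d(y_n,z_n)+d(z_n,y)\to0$.

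I expect the main obstacle to be Step 1 in the $\kappa>0$ case. There one must check that the $k$-convexity of Lemma \ref{lem:Ohta} holds on all of $D$ (not just a ball around $u$), which is exactly what the hypothesis on $U$ guarantees, since $D\subset\overline B(u,\ep)$ with $\ep<D_\kappa/2$. One must also verify that the one-sided derivative argument is legitimate; this is fine because the inequality is used directly and not differentiated, so only the limit $t\to0$ of the divided inequality is needed. The remaining steps are bookkeeping with the triangle inequality and boundedness.
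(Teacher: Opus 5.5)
Your proof is correct and follows essentially the same route as the paper: you approximate the two projections through the Hausdorff distance, then turn near-optimality into closeness using the variational inequality $d^2(u,z)\ge d^2(u,P_{K'}u)+\frac{k}{2}d^2(P_{K'}u,z)$, which comes from the $k$-convexity in Lemma~\ref{lem:Ohta}. The paper cites this inequality from Kuwae instead of deriving it as you do, and it applies the strict form on the fixed set $K$ at the base point $x$, to a point $w_n\in K$ near $P_{K_n}x_n$. You instead apply it on the moving sets $K_n$ at the base points $x_n$, which works because your constant $k$ is uniform over $U$.
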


\begin{proof}
Put $z_n=P_{K_n}x_n$, $z=P_Kx$, and $h_n=d_H(K_n,K)$. Choose $y_n\in K_n$ and $w_n\in K$ such that
\[
 d(y_n,z)\leq d(z,K_n)+n^{-1} \leq h_n+n^{-1},
 \qquad d(w_n,z_n)\leq d(z_n,K)+n^{-1} \leq h_n+n^{-1}.
\]
By assumption, we have $y_n \to z$ and $d(w_n,z_n) \to 0$.
By the choice of $z_n$,
\[
 d(x_n,z_n)\leq d(x_n,y_n),
\]
so that $\limsup_{n \to \infty} d(x_n,z_n)\leq d(x,z)$.
On the other hand,
\[
 d(x,z)\leq d(x,w_n)
 \leq d(x,x_n)+d(x_n,z_n)+h_n+n^{-1},
\]
and hence $d(x_n,z_n)\to d(x,z)$ as well as $d(x,w_n)\to d(x,z)$.

On the relevant common ball, by Lemma \ref{lem:Ohta}, the functions $d^2(x,\,\cdot\,)$ are
uniformly $k$-convex for some $k>0$. The variational inequality for the projection to $K$ (see \cite[Lemma 2.8]{KuwaeJensen}) gives
\[
  d^2(x,w_n)-d^2(x,z)
  \geq \frac{k}{2}d^2(w_n,z).
\]
Thus $w_n\to z$, and
\[
 d(z_n,z)\leq d(z_n,w_n)+d(w_n,z)\longrightarrow0.
\qedhere \]
\end{proof}

\section*{Declaration on the use of generative AI}

During the preparation of this work, the authors used ChatGPT 5.5 (OpenAI) in the following two ways.
First, as an assistance for exploring some of the arguments of the paper.
The most significant instance is Proposition~\ref{prop:stability}: we had only conjectured the statement of item 2, and its complete proof, going through item 1, was suggested to us by ChatGPT 5.5.
Second, for proofreading the manuscript.
All statements and proofs obtained with this assistance were subsequently checked and written up by the authors, who reviewed and edited the content as needed and take full responsibility for the content of this article.

\end{document}